\newcommand{\sE}{{\mathbb{E}}}
\newcommand{\sP}{{\mathbb{P}}}
\newcommand{\sN}{{\mathbb{N}}}
\newcommand{\sR}{{\mathbb{R}}}
\newcommand{\sY}{{\mathbb{Y}}}
\newcommand{\sT}{{\mathbb{T}}}
\newcommand{\GG}{{\mathcal{G}}}
\newcommand{\FF}{{\mathcal{F}}}
\newcommand{\DD}{{\mathcal{D}}}
\newcommand{\EE}{{\mathcal{E}}}
\newcommand{\HH}{{\mathcal{H}}}
\newcommand{\clock}{{\Xi}}
\newcommand{\ind}{{\mathbb{I}}}
\newcommand{\widebar}{\overline}
\newtheorem{teorema}{Theorem}[section]
\newtheorem{lema}[teorema]{Lemma}
\newtheorem{corolario}[teorema]{Corollary}
\newtheorem{observacao}[teorema]{Remark}
\newtheorem{proposicao}[teorema]{Proposition}
\newtheorem{definicao}{Definition}[section]
\newtheorem{suposicao}{Assumption}
\DeclareMathOperator{\sgn}{sgn}
\DeclareMathOperator{\real}{Re}
\author{Luiz Renato Fontes \footnote{IME-USP, Rua do Mat\~ao 1010, 05508-090
S\~ao Paulo SP,  Brazil, lrfontes@usp.br} \thanks{Partially
supported by CNPq grant 305760/2010-6, and FAPESP grant 2009/52379-8}
\and Gabriel R.~C.~Peixoto \footnote{IME-USP, Rua do Mat\~ao 1010, 05508-090
S\~ao Paulo SP,  Brazil, gabrielrcp@gmail.com} \thanks{Supported by
CNPq fellowship 140177/2012-4 and a CAPES institutional fellowship}}
\title{GREM-like K processes on trees\\ with infinite depth}
\begin{document}

\maketitle



\begin{abstract}
  
  We take up the issue of deriving the limit as $n\to\infty$ of the GREM-like K process on a tree with $n$ levels.
  Under specific
  conditions on the parameters of the process, implying the martingality of a modification of 
  the underlying clock process sequences, we obtain infinite level clock processes as nontrivial limits of the 
  finite level clocks, 
  and use them to construct a process on a suitable product space 
  which is then shown to be the 
  limit of the $n$ level K processes as $n\to\infty$. 
  Some properties of the limiting, infinite level K process are established, like an expression for the asymptotic
  empirical measure of cylinders, giving information on the prospective equilibrium measure of the infinite level dynamics.


\end{abstract}

\noindent AMS 2010 Subject Classifications: {60K37, 60B12}

\smallskip

\noindent Keywords and Phrases: {spin glass dynamics, mean field spin glasses,
Generalized Random Energy Model, trap models, infinite volume, K processes,
infinitely many hierarchies}



\section{Introduction}
\label{chap:introduction}
The K process on a tree with finitely many levels/finite depth appeared in~\cite{kn:fgg} as the  
weak limit of trap models on a tree with finitely many levels as the {\em volume} of the tree 
diverges. With the appropriate choice of parameters, the trap model on a tree with finitely 
many levels  is a phenomenological model for a dynamics of the GREM (Generalized 
Random Energy Model~\cite{kn:d2}) at low temperature and under a regime of parameters 
and time scale such that each hierarchy\footnote{Let us recall that the GREM is a 
hierarchical mean-field spin glass, with an arbitrary fixed finite number of hierarchies.} 
is {\em close to equilibrium} under the dynamics.
It was introduced in~\cite{kn:sn}; see also~\cite{kn:b}. 
In this case it is called the GREM-like trap model, and the associated K process is called the 
GREM-like K process.
A Glauber dynamics for the REM (Random Energy Model~\cite{kn:d1}), namely the 
{\em random hopping dynamics}, was studied 
in~\cite{kn:abg}, and shown to exhibit {\em aging} on a long time scale far from 
equilibrium; \cite{kn:fl} shows that this dynamics, under a time rescaling where it is close
to equilibrium, converges to a (single level) K process~\cite{kn:fm}. Aging results for a 
random hopping dynamics for the {\em $p$-spin} model were derived in~\cite{kn:abc}; 
see also~\cite{kn:bg}. There are so far no published
study that we know of of Glauber dynamics for the GREM, but we expect that a properly
defined 
random hopping dynamics for the GREM with the proper parameters 
and at the right time scale also converges to the GREM-like K process~\cite{kn:fg}.

Let us briefly recall/describe the trap model on a tree with finitely many levels,
the associated K process, as well as the GREM-like versions. 
Consider a tree $\sT_n$ with $n$ levels/generations starting from a root $\emptyset$ at level 
$0$. Level 1 has volume $M_1$, and each vertex of level $i$ is connected to $M_{i+1}$ 
vertices of level $i+1$, $i=0,\ldots,n-1$. The trap model on a tree with $n$ levels is a 
Markov jump process on the leaves of $\sT_n$, which when at a leaf $x|_n$ waits an 
exponential time of mean $\gamma_n(x|_n)$ and jumps to another leaf $y|_n$, chosen as follows.
From $x|_n$, we go down the tree through the unique path connecting it to $\emptyset$, flipping 
coins found at each site along the way. The coin at site $x|_i$ on level $i$ has probability
$p_i(x|_i)$ of turning up heads, independently of all the other coins. We set $p_0(\emptyset)=0$,
and $p_i(x|_i)=(1+M_{i+1}\gamma_i(x|_i))^{-1}$, where $\gamma_i(x|_i)$,
$x|_i$ sites of $\sT_n$, $i=1,\ldots,n$, are positive parameters of the model. Let $x|_i$ be
the first site on the way from $x|_n$ to $\emptyset$ whose coin flip turns up tails. Then $y|_n$
is chosen uniformly among the leaves of $\sT_n$ which are descendants of $x|_i$. Provided the 
$\gamma$ parameters satisfy a summability condition (saying roughly that the sum over the 
leaves of $\sT_n$ of the products of the $\gamma$'s over the path from each leaf to $\emptyset$
converges as $M_i\to\infty$, $i=1,\ldots,n$), then (a suitable representation of) 
this process converges in distribution as $M_i\to\infty$, $i=1,\ldots,n$, to a process on 
$\widebar\sN_*^n$, 
where $\widebar\sN_*=\sN_*\cup\{\infty\}$, and $\sN_*=\{1,2,\ldots\}$ is the positive integers.
We call the limiting process the K process on an $n$ level tree with set of parameters
$\{\gamma_i(x|_i),\,x|_i\in\sN_*^i,\,i=1,\ldots,n\}$.

It is the goal of this paper to derive a nontrivial 
limit of the latter process 
as $n\to\infty$ in the case of the GREM-like K process.
This process is a K process on 
a finite level tree, characterized by the following choice of parameters.  Let $n$ be
the depth/number of levels of the tree.
For each $i=1,\ldots,n$ and each $x|_{i-1}\in\sN_*^{i-1}$ (with $\sN_*^{0}=\{\emptyset\}$),
let $\underline{\gamma_i}(x|_{i-1}):=\{\gamma_i(x|_i),\,x_i\in\sN_*\}$ be a Poisson point 
process on $(0,\infty)$ with intensity function $c_i t^{-1-\alpha_i}$ in decreasing order, 
independent of each other, where $x|_i=(x|_{i-1},x_i)$.
The constants $c_1,\ldots,c_n$ are positive, and for the moment arbitrary, and we
must have $0<\alpha_1<\ldots<\alpha_n<1$. As already briefly mentioned above, 
it arises as the scaling limit of the GREM-like
trap model of Sasaki and Nemoto~\cite{kn:sn}. This is a trap model on $\sT_n$ which 
under a suitable time rescaling, and provided the {\em volumes} $M_1,\ldots,M_n$ satisfy
a {\em fine tuning} condition among themselves, becomes a trap model on $\sT_n$ with
the following set of parameters. For each $i=1,\ldots,n$ and each $x|_{i-1}$ on the
$(i-1)$-th level of $\sT_n$, the $M_i$ descendants of $x|_{i}$ of $x|_{i-1}$ on level 
$i$ are \emph{i.i.d.} random variables in the basin of attraction of an $\alpha_i$-stable law,
suitably scaled (in the usual way, so that when ordered they converge in law to 
$\underline{\gamma_i}(x|_{i-1})$ as $M_i\to\infty$, $i=1,\ldots,n$). Detailed 
definitions and constructions of the K processes mentioned above are provided 
in the next section.

Let us now roughly explain the main steps and ideas of the derivation of our main result.
We will show that each coordinate of a suitable version of the $n$ level GREM-like K process
converges in probability. 
Suppose we are looking at its $k$-th coordinate. 
The appropriate $k$ level clock process is a key ingredient; let us call it $\theta_k^n$.
It is defined in terms of the composition of $n-k$ single level clock processes, and we want 
to take its limit as $n\to\infty$. We will do that by means of a martingale convergence theorem.
Since $(\theta_k^n)_n$ is {\em not} a martingale, we introduce a modification, namely  
$\tilde\theta_k^n$, which is. This modification is obtained by inserting {\em missing factors} 
in a certain way {\em at the end of} $\theta_k^n$. 
These missing factors, which depend on the random parameters only,
are themselves obtained via a martingale convergence theorem (this time the randomness comes 
from the parameters). In order that these martingale properties hold true and the limits 
are nontrivial, we need to make a specific choice of the 
constants $c_1,c_2,\ldots$ mentioned above, and require that $\alpha_n\to1$ as
$n\to\infty$ sufficiently fast. We then have limits for the modified clocks at all levels, and use them 
to define an infinite level process. After showing that the modifications introduced in the clocks
wash away in the limit, we are in position to argue directly that the infinite level process 
defined with the limiting modified
clocks is the limit of the original $n$ level K process as $n\to\infty$. 

We thus obtain an infinite level dynamics which is the limit as $n\to\infty$ of the $n$ level GREM-like 
K process
(under the appropriate assumptions on parameters). Combined with the convergence result of~\cite{kn:fgg},
by abstract nonsense, we have that this infinite level dynamics arises as the scaling limit of GREM-like
trap models as both volume and number of levels diverge (in a way which is however {\em not} specified 
by the abstract argument), provided of course that the right conditions are in place. Presumably 
this is also the case for suitable random hopping dynamics for the GREM under appropriate conditions.

At the closure of this introduction, we outline the organization of the remainder of this article.
In Section~\ref{sec:constr-main-result}, we define the GREM-like K process in detail and formulate
our main convergence result, namely Theorem~\ref{thm:conv-infinite-process}. 
Section~\ref{sec:preliminaries} is devoted to auxiliary results. In Section~\ref{sec:clocks} we
derive the limit of the clock processes, and in Section~\ref{sec:conv-k-proc} we prove 
Theorem~\ref{thm:conv-infinite-process}, obtaining along the way properties of the {\em infinity set}
of the limiting K process (that is, the set of times where any of its coordinates equals $\infty$). 
And in the final Section~\ref{sec:invar-meas-some},
we derive asymptotics for the empirical measure of cylinders of the limiting process, thus shedding 
light on the prospective equilibrium measure of the infinite level dynamics.



\section{Model and main result}
\label{sec:constr-main-result}

We start by defining the K process on a tree with finite depth via a
slight adaptation of the construction employed in \cite{kn:fgg}.  Many
elements of this construction will be used to define the infinite
depth version of this process.

The state space of the K process on a tree with depth $k$ is
$\widebar{\sN}_*^k$, where $\sN_* = \{1, 2, \ldots \}$ and
$\widebar{\sN}_* = \sN_* \cup \{ \infty \}$.  We denote elements of
this space by $x|_k = (x_1, \ldots, x_k)$. For notation brevity we
will often denote $x|_k = x_1x_2 \ldots x_k$ and also denote $x|_ky$
as the concatenation of $x|_k$ and $y$, that is $x|_k y = (x_1,
\ldots, x_k, y) \in \widebar{\sN}_*^{k+1}$.

It will be useful to visualize $\widebar{\sN}_*^k$ as the nodes at
depth $k$ of a tree, with $\emptyset$ as root and node $x|_k y$ as an
offspring of node $x|_k$, Figure \ref{fig:tree-structure} illustrates this
representation.

\begin{figure}[htb]
  \centering
  \includegraphics{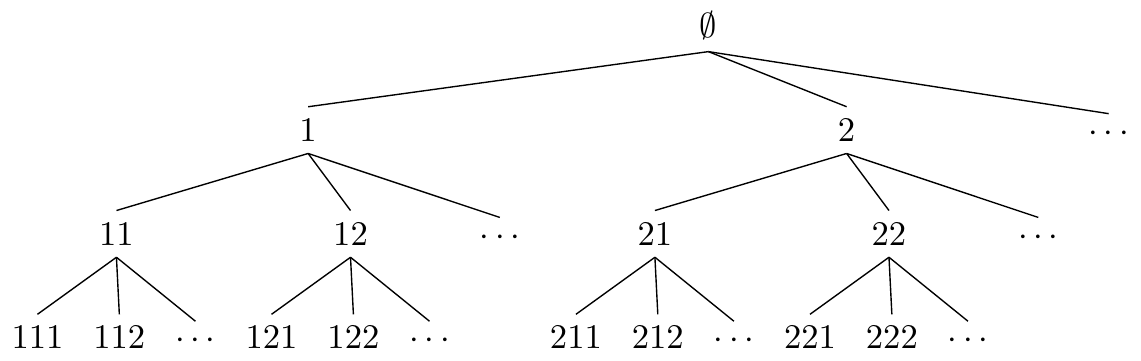}
  \caption{Tree representation of the state space}
  \label{fig:tree-structure}
\end{figure}

As parameters, take $0 < \alpha_1 < \alpha_2 < \ldots < 1$ real
numbers such that $\alpha_n \to 1$ as $n \to \infty$.

For each $k \in \sN_*$ and $x_{k-1} \in \sN_*^{k-1}$ take
$\gamma_k(x_{k-1} 1) > \gamma_k(x_{k-1} 2) > \ldots > 0$ the ordered marks of
a Poisson point process on $\sR^+$ with intensity measure $\mu_k$:
\begin{equation}
   \label{eq:constant-choice}
  \mu_k(d t) := \frac{c_k}{t^{1+\alpha_k}}, t > 0,
  \qquad
  c_k := \frac{\alpha_k}{\Gamma\left(1 - \frac{\alpha_k}{\alpha_{k+1}} \right)},
\end{equation}
where $\Gamma(\beta) = \int_0^\infty t^{\beta-1} e^{-t} d t$ is the
standard Gamma function. This choice for the constant $c_k$ was
deliberately made by us in order to obtain convergence of the
processes that we are about to construct, as pointed out at the Introduction.

The construction will be made on a probability space that admits all
these Poisson processes independently and independent from the
following random variables:
\begin{itemize}
\item $\{ T_i^{k, x}: i, k, x \in \sN_* \}$: a family of
  independent, identically distributed (\emph{i.i.d.}) random
  variables with exponential distribution of mean $1$;
\item $\{ N^{k,x}: k, x \in \sN_* \}$: a family
  of independent Poisson processes, each with  rate $1$. We will
  denote the marks of $N^{k,x}$ by $0 < \sigma^{k,x}_1 < \sigma^{k,
    x}_2 < \ldots$.
\end{itemize}

We will construct the K processes recursively.  Assume $X_0
\equiv \emptyset$ and define for $k = 1, 2, \ldots$ and $t \geq 0$:
\begin{subequations}
  \label{eq:finite-k-proc}
  \begin{equation}
    \label{eq:def-clock}
    \clock_k(t) := \sum_{x \in \sN_*} \sum_{i=1}^{N^{k,x}(t)}
    \gamma_k(X_{k-1}(\sigma_i^{k,x}) x)  T^{k, x}_i,
  \end{equation}
  \begin{equation}
    \label{eq:def-proc-k-finito}
    X_k(t) := \begin{cases}
      (X_{k-1}({\clock_k}^{-1}(t)), x),
      & \textrm{ if } t  \in \bigcup_{i=1}^{\infty} \left[
        \clock_k(\sigma^{k,x}_i -),
        \clock_k(\sigma^{k,x}_i)
      \right);\\
      (X_{k-1}({\clock_k}^{-1}(t)), \infty),
      & \textrm{ otherwise,}
    \end{cases}
  \end{equation}
\end{subequations}
where ${\clock_k}^{-1}(t) = \inf\{r \geq 0: \clock_k(r) > t\}$ is the
generalized inverse of $\clock_k$.

\begin{definicao}
  We call $X_k$ the {\em K process on a tree with depth $k$} or {\em with $k$ levels}, or still
  {\em $k$ level K process}, with parameter set 
$\underline{\gamma_k}:=\{\gamma_i(x|_i),\,x|_i\in\sN_*^i,\,i=1,\ldots,k\}$.
\end{definicao}

\begin{observacao}
  The processes $\clock_j$ are the so-called {\em clock processes}.  
\end{observacao}

\begin{observacao}
  Note that our choice for $\{\gamma_k(x|_k): k \in \sN_*, \, x|_k \in
  \sN_*^k \}$ satisfies almost surely:
  \begin{equation}
    \label{eq:sum-gamma}
    \sum_{x|_k \in \sN_*^k} \widebar{\gamma}_k(x|_k) < \infty,
    \qquad
    \widebar{\gamma}_k(x|_k) := \prod_{j=1}^k \gamma_j(x|_j).
  \end{equation}
  Any choice for $\{\gamma_k(x|_k)\}$, random or not, that satisfies
  this condition is suitable for the provided construction, as it
  guarantees that the clock processes are finite almost surely.
\end{observacao}

\begin{observacao}
  This version of the K process, with $\gamma_k(x|_k)$ given randomly as 
  specified above, was called GREM-like K process in
  \cite{kn:fgg}, where it is shown that, under suitable
  conditions, it is a scaling limit of the trap models introduced in
  \cite{kn:sn}.
  In order to prove this result, a slightly different
  construction is used, which is suited to the coupling argument there undertaken.
  We opted here for this slightly simpler
  construction because it will be more adequate to work with further
  on. Both constructions yield processes with the same law.
\end{observacao}

\begin{definicao}
  \label{def:theta}
  For $k \leq n$ define the {\em clock composition} as:
  \begin{equation}
    \label{eq:def-theta}
    \theta_k^n := \clock_n \circ \clock_{n-1} \circ \cdots \circ \clock_k.
  \end{equation}
  For simplicity, these stochastic processes will also be called
  clocks.
\end{definicao}

\begin{observacao}
  \label{obs:coordinate-k-process}
  Note that, for $j < k$, if $X_{k, j}$ is the j-th coordinate of
  $X_k$, the K process on a tree with depth $k$, then:
  \begin{equation}
    \label{eq:coordinate-k-process}
    X_{k, j}(t) = \begin{cases}
      x, & \text{ if } t \in \bigcup_{i=1}^\infty
      [\theta_j^k(\sigma_i^{j, x}-), \theta_j^k(\sigma_i^{j, x}));\\
      \infty, & \text{ otherwise.}
    \end{cases}
  \end{equation}
\end{observacao}

\begin{observacao}
  \label{obs:infinitos-niveis}
  We note that, since we did not define $\gamma_k(x|_k)$ for 
  $x|_k\in \widebar{\sN}_*^k \setminus \sN_*^k$, we had better
  show that $X_{k-1}(\sigma_i^{k, x}) \in \sN_*^{k-1}$ \emph{a.s.}.

  To do this, we first note that for $j < k$, $X_{k-1, j}(s) = \infty$ if
  and only if $s$ belongs to the image of $\theta_j^{k-1}$, and this set
  has null Lebesgue measure, since $\theta_j^{k-1}$ is a step function.

  Furthermore, we can infer from the construction that \emph{a.s.}, for
  any $j \leq k$, $X_{k, j}$ is constant and finite in any interval 
  $[\clock_k(\sigma_i^{k, x}-), \clock_k(\sigma_i^{k, x}))$.

  We also note that if $s$ is a discontinuity point of $\theta_k^n$, for
  $k \leq n$, then $s = \sigma_i^{k, x}$ for some $i, x \in\sN_*$. 
  This can be proven by noting that if $s$ is a discontinuity
  point of $\theta_k^n$ and $s \not\in \{ \sigma_i^{k, x}: i, x \in
  \sN_*\}$, then $\theta_k^{m-1}(s) = \sigma_j^{m, y}$ for some $k < m
  \leq n, y \in \sN_*$ and therefore $\sigma_j^{m, y}$ belongs to the
  image of $\theta_k^{m-1}$, event that has probability zero.
\end{observacao}

Remark \ref{obs:coordinate-k-process} suggests an approach to define
the K processes on a tree with infinite depth by taking the limit of
$\theta_k^n$ as $n \to \infty$. In Theorem \ref{thm:limit-clock} we
will define stochastic processes $\theta_k^\infty$, $k\geq1$, and in Theorem
\ref{thm:clock-conv} we will show that these stochastic processes are
in fact the limits of $\theta_k^n$, $k\geq1$, as $n \to \infty$. 
But the nontriviality of the limits requires a 
condition, namely 
\begin{equation}\label{cond2}
    \sum_{k=1}^\infty \frac{1-\alpha_{k+1}}{1-\alpha_k} < \infty.
\end{equation}
We will refer to this condition below as {\em the nontriviality condition}.
In order to
define the infinite level K process and state our main result, we will assume for
the remaining of this section that these processes $\theta_k^\infty$
are already constructed.

\begin{definicao}
  \label{def:k-proc-infinite}
  The {\em K process on a tree with infinite depth} or {\em with infinite levels},
  or still {\em infinite level K process}, with parameter set 
  $\underline{\gamma}:=\{\gamma_i(x|_i),\,x|_i\in\sN_*^i,\,i\geq1\}$,
  is a continuous time
  process $\sY = (Y_k)_{k \in \sN_*}$ taking values on
  $\widebar{\sN}_*^{\sN_*}$, where:
  \begin{equation}
    \label{eq:k-proc-infinite}
    Y_k(t) = \begin{cases}
      x, & \textrm{ if } t \in \bigcup_{i=1}^{\infty} \left[
        \theta_k^\infty(\sigma^{k, x}_i -),
        \theta_k^\infty(\sigma^{k, x}_i )
      \right);\\
      \infty, & \textrm{ otherwise.}
    \end{cases}
  \end{equation}
\end{definicao}

To be able to state a convergence theorem for the K processes in trees
with finite depth to the one on a tree with infinite depth, we need to
define an appropriate topology on the state space.  For a fixed
coordinate we will use a compactification of $\widebar{\sN}_*$. Namely
we equip $\widebar{\sN}_*$ with the metric $\rho_0$:
\begin{displaymath}
  \rho_0(x, y) := \left| \frac{1}{x} - \frac{1}{y}  \right|, \quad x, y
  \in \widebar{\sN}_*,
\end{displaymath}
under the convention that $\frac{1}{\infty} = 0$. In
$\widebar{\sN}_*^{\sN_*}$, we will adopt the metric $\rho$:
\begin{displaymath}
  \rho(x|_\infty, y|_\infty) := \sum_{k = 1}^{\infty} \frac{\rho_0(x_k, y_k)}{2^k}.
\end{displaymath}

Two points $x|_\infty$ and $y|_\infty$ are close in this metric if
they are close on a finite number or coordinates. What happens on
``large'' coordinates influences little.

We will extend the metric $\rho$ to $\bigcup_{k=1}^\infty
\widebar{\sN}_*^{k}$ by adding a new symbol $\zeta$ to the state space
and define $\rho_0(\zeta, x) := \ind\{ x = \zeta \}$. Then extend
$\rho$ by ``adding $\zeta$ at the end'' of $x|_k$. That is, for $j
\leq k \leq \infty$:
\begin{align*}
  \rho(x|_j, y|_k) &:=
 \sum_{i = 1}^{j} \frac{\rho_0(x_i, y_i)}{2^i} +
 \sum_{i = j+1}^{k} \frac{\rho_0(\zeta, y_i)}{2^i},\\
  \rho(y|_k, x|_j) &:= \rho(x|_j, y|_k). 
\end{align*}

\begin{observacao}
  $\rho$ is a complete metric over $\widebar{\sN}_*^{\sN_*} \cup
  \bigcup_{k=1}^\infty \widebar{\sN}_*^k$. It also generates a
  separable and compact topology.
\end{observacao}

\begin{teorema}
  \label{thm:conv-infinite-process}
  Under the nontriviality condition~(\ref{cond2}), $\sY$ is a càdlàg process under $\rho$
  and $X_k$ converges to $\sY$ as $k \to \infty$ in probability under
  the Skorohod topology using $\rho$.
\end{teorema}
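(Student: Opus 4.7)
The plan is to establish the càdlàg property of $\sY$ directly from the weighted-sum structure of $\rho$, and to prove the Skorohod convergence by truncating to the first $J$ coordinates and constructing an explicit single time change aligning all relevant discontinuities. The càdlàg part is immediate: each coordinate $Y_j$ is a càdlàg step function on $(\widebar{\sN}_*, \rho_0)$ by Definition \ref{def:k-proc-infinite}, and since $\rho_0 \leq 1$ and $\sum_{j\geq 1} 2^{-j} < \infty$, right-continuity and existence of left limits of $\sY$ at every $t$ follow from termwise properties combined with dominated convergence in the sum defining $\rho(\sY(s), \sY(t))$.

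For the convergence, fix $T, \varepsilon > 0$ and $J$ with $2^{-J} < \varepsilon/2$. Using $\rho_0 \leq 1$, for $k \geq J$,
\[
\rho(X_k(t), \sY(t)) \leq \sum_{j=1}^J 2^{-j}\rho_0(X_{k,j}(t), Y_j(t)) + 2^{-J},
\]
so it suffices to couple the first $J$ coordinates. The crucial structural fact, visible from Remark \ref{obs:coordinate-k-process} and Definition \ref{def:k-proc-infinite}, is that for each fixed $j$ both $X_{k,j}$ and $Y_j$ visit the same sequence of states in $\sN_*$, determined by the Poisson processes $N^{j,\cdot}$ and their marks $\sigma_i^{j,x}$ (which do not depend on $k$); only the transition times differ, being $\theta_j^k(\sigma_i^{j,x})$ versus $\theta_j^\infty(\sigma_i^{j,x})$. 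Since each $\theta_j^\infty$ is a step function, for $j \leq J$ only finitely many of its jumps fall in $[0, T]$ almost surely, and by Theorem \ref{thm:clock-conv} the finite-level values $\theta_j^k(\sigma_i^{j,x})$ and $\theta_j^k(\sigma_i^{j,x}-)$ converge in probability to their infinite-level counterparts for each such $(j, i, x)$.

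The next step is to enumerate the relevant limit jump times across $j \leq J$ in $[0, T]$ in increasing order as $s_1^\infty < \cdots < s_N^\infty$, let $s_\ell^k$ be the finite-level counterpart (from the same $(j, i, x)$ triple), and define $\lambda_k$ on $[0, T]$ as the continuous, strictly increasing piecewise linear time change sending $s_\ell^\infty \mapsto s_\ell^k$ and fixing the endpoints. Since the $s_\ell^\infty$ are a.s.\ distinct, for $k$ large the order of the $s_\ell^k$ matches, so $\lambda_k$ is well-defined and $\sup_t |\lambda_k(t) - t| \to 0$ in probability. On each interval $(s_\ell^\infty, s_{\ell+1}^\infty)$ every $Y_j$ ($j \leq J$) is constant in some $\sN_*$-state, and by the shared-visit-sequence observation $X_{k,j}(\lambda_k(t))$ is constant in exactly the same state on this interval. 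Hence $\rho_0(X_{k,j}(\lambda_k(t)), Y_j(t)) = 0$ for all $t \in [0, T]$ and $j \leq J$, and together with the tail bound this gives $\sup_t \rho(X_k(\lambda_k(t)), \sY(t)) \leq 2^{-J} < \varepsilon/2$ with probability approaching $1$ as $k \to \infty$, yielding the Skorohod convergence in probability.

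The main obstacle is precisely building one time change that simultaneously aligns jumps of all first $J$ coordinates, whose real-time positions come from independent Poisson-and-clock data and could in principle interleave differently at finite and infinite levels. This is handled by a.s.\ distinctness of the limit jump times $s_\ell^\infty$ and the finiteness of the set of relevant jumps in $[0, T]$, both inherited from $\theta_j^\infty$ being a step function without accumulation on compact intervals via the construction of Theorem \ref{thm:limit-clock}; a minor additional point is behavior near the explosion time of each $\theta_j^\infty$, which is absorbed by selecting $T$ as a continuity point of the limit.
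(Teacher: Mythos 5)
Your overall strategy (truncate coordinates, build one piecewise-linear time change aligning jump times, exploit the weights $2^{-j}$) is in the spirit of the paper's proof, but there is a genuine gap at the step where you claim that for $j \le J$ ``only finitely many of [$\theta_j^\infty$'s] jumps fall in $[0,T]$ almost surely,'' and consequently that $Y_j$ is constant in an $\sN_*$-state on each interval between consecutive aligned jump times. Both claims are false. The discontinuities of $\theta_j^\infty$ occur at points of $\{\sigma_i^{j,x}: i,x\in\sN_*\}$, and for any $S$ with $\theta_j^\infty(S)>T$ there are \emph{a.s.}\ infinitely many indices $x$ for which $N^{j,x}$ has a mark in $[0,S]$; correspondingly $Y_j$ visits infinitely many (high-index) states during $[0,T]$, and the times at which $Y_j=\infty$ form a nonempty (Lebesgue-null) set interspersed between these visits (Corollaries \ref{cor:tempo-infinito} and \ref{cor:lebesgue-tempo-infinito}). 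Hence your finite list $s_1^\infty<\cdots<s_N^\infty$ does not exhaust the jumps in $[0,T]$, a piecewise-linear $\lambda_k$ with finitely many nodes cannot align them all, and the asserted identity $\rho_0(X_{k,j}(\lambda_k(t)),Y_j(t))=0$ for all $t\in[0,T]$, $j\le J$, cannot hold.

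The paper repairs exactly this point by a second truncation, in the state variable: only the sojourn intervals of coordinates $j<N$ at states $x\le M$ are aligned, and whenever the tracked or limiting coordinate is at a state $>M$ or at $\infty$ the error is absorbed by $\rho_0\le 1/(M+1)<\epsilon/2$. Moreover, instead of enumerating jump times coordinate by coordinate and checking that their order at finite levels eventually matches, the paper uses the composition law $\theta_j^{k_n}=\theta_{N+1}^{k_n}\circ\theta_j^{N}$ and $\theta_j^\infty=\theta_{N+1}^\infty\circ\theta_j^{N}$ (Corollary \ref{cor:clock-composition}): all relevant interval endpoints, for all $j<N$ and $x\le M$, are images under $\theta_{N+1}^{k_n}$, respectively $\theta_{N+1}^\infty$, of a single finite set $S_n$ of level-$N$ times, so setting $\lambda_n(\theta_{N+1}^{k_n}(s)):=\theta_{N+1}^\infty(s)$ for $s\in S_n$ aligns all selected intervals of all coordinates simultaneously, with the correct order guaranteed by monotonicity of the clocks, and $\sup_t|\lambda_n(t)-t|\to0$ follows from the uniform convergence of $\theta_{N+1}^{k_n}$ along an \emph{a.s.}\ convergent subsequence extracted from Theorem \ref{thm:clock-conv} (a diagonal argument the paper also needs, since the clock convergence is only in probability). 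Your proposal is missing both ingredients, and the state truncation together with the exact-alignment-only-for-$x\le M$ bookkeeping is essential, not a refinement.
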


\begin{observacao}
  As we shall see below, conditions~(\ref{eq:constant-choice}) 
  and~(\ref{cond2}) are crucial in our approach. Without them
  we cannot insure neither the existence nor the nontriviality of
  the limiting clocks $\theta_k^\infty$. See Remark~\ref{rmk:conds}
  below.
\end{observacao}

\begin{observacao}\label{gamma}
  We will denote by $\sP$ de underlying probability measure, with $\sE$ as expectation. We will use the notation $\sE_\gamma$  for the conditional
  expectation given $\underline{\gamma}$. 
\end{observacao}



\section{Preliminaries}
\label{sec:preliminaries}

In this section we will prove some auxiliary results on 
$\{\gamma_k(x|_k): x|_k \in \cup_{j=1}^\infty \sN_*^j \}$,
which may be thought of as a random environment for the process.

\begin{definicao}
  \label{def:cilindros}
  For a fixed $k, n \in \sN_*$, $k \leq n$, and $x|_k \in
  \widebar{\sN}_*^k$ let us denote the ``cylinders'' based on $x|_k$ as:
  \begin{align*}
    [x|_k]_n &:= \left\{
      y|_n \in \sN_*^n: y|_k = x|_k
    \right\},\\
    \widebar{[x|_k]}_n &:= \left\{
      y|_n \in \widebar{\sN}_*^n: y|_k = x|_k
    \right\}.
  \end{align*}
\end{definicao}

\begin{proposicao}
  \label{prop:ppp-invariance}
  Let $\{\gamma_i: i \in \sN \}$ be the marks of a Poisson point
  process on $\sR^+$ with intensity measure $\mu (d t) = c/t^{1+\alpha}$ for $t >
  0$ with some $\alpha \in (0, 1)$ and $c > 0$.  Let $\{ X_i : i \in
  \sN \}$ be \emph{i.i.d.} positive random variables, independent from
  the Poisson process, such that $\sE(X_1^\alpha) <
  \infty$. Then $\{ \gamma_i X_i : i \in \sN \}$ is also a Poisson
  Point process, with intensity measure $\sE(X_1^\alpha) \mu$.
\end{proposicao}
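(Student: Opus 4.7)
The plan is to view this as a consequence of the mapping theorem for Poisson point processes applied to the marked version of the point process $\{\gamma_i\}$. First, I would consider the enlarged point process $\{(\gamma_i, X_i) : i \in \sN\}$ on $\sR^+ \times \sR^+$. By the marking theorem, since the $X_i$ are i.i.d.\ with common distribution $F$ and independent of $\{\gamma_i\}$, this is a Poisson point process with intensity measure $\mu \otimes F$.

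Next, I would apply the map $\Phi: (t, x) \mapsto tx$ and invoke the mapping theorem: the image $\{\gamma_i X_i\}$ is a Poisson point process on $\sR^+$ with intensity given by the pushforward measure $\Phi_*(\mu \otimes F)$, provided this pushforward is a Radon measure on $\sR^+$. The computation of the pushforward reduces to the change of variables $u = tx$ (with $x$ fixed) in the inner integral: for a Borel set $A \subset \sR^+$,
\begin{equation*}
\Phi_*(\mu \otimes F)(A) = \int_0^\infty \int_0^\infty \ind\{tx \in A\}\, \frac{c}{t^{1+\alpha}}\, dt\, F(dx) = \int_0^\infty \int_0^\infty \ind\{u \in A\}\, \frac{c x^\alpha}{u^{1+\alpha}}\, du\, F(dx),
\end{equation*}
and swapping the order of integration (justified by Tonelli) produces the factor $\sE(X_1^\alpha)$ multiplying $\mu(A)$.

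The only point requiring care is to check that the resulting intensity defines a bona fide Radon measure, so that the mapping theorem applies in its standard form. Since $\sE(X_1^\alpha) < \infty$ by assumption, the measure $\sE(X_1^\alpha)\,\mu$ is finite on every set of the form $(\epsilon, \infty)$ with $\epsilon > 0$ (as $\mu$ itself is), hence Radon on $\sR^+$. This is the main hypothesis at work, and without it the image point process could fail to be locally finite near the origin. This will be the only real step to pay attention to; everything else is a direct appeal to standard Poisson process theory.
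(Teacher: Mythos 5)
Your proof is correct and follows essentially the same route as the paper's: both pass to the marked process $\{(\gamma_i,X_i)\}$ with intensity $\mu\otimes F$ and apply Kingman's Mapping Theorem to the product map, your change-of-variables computation simply making explicit the pushforward calculation the paper leaves implicit. Your observation that $\sE(X_1^\alpha)<\infty$ is what guarantees local finiteness of the image intensity is exactly the condition the paper invokes when it says the map has no accumulation points outside zero.
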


\begin{proof}
  Define $S := \{ (\gamma_i, X_i): i \in \sN\}$ and note that $S$ is
  the set of the marks of a Poisson point process on the first
  quadrant of $\sR^2$ with intensity measure $\pi = \mu \times \nu$,
  where $\nu$ is the probability measure of $X_1$.

  Note that $T(x, y) = xy$ is a continuous transformation without
  accumulation points outside of zero, so
  $\{ T(s): s \in S\} = \{\gamma_i X_i: i \in \sN\}$ is a Poisson
  point process (see eg.  \cite{kn:k}, Mapping Theorem, Section
  2.3). Its intensity measure can be computed as $\sE(X_1) \mu$.
\end{proof}

\begin{proposicao}
  \label{prop:stable-moments}
  Let $X$ be a positive random variable, with Laplace transform
  $\phi(\lambda) := \sE(e^{-\lambda X}) = e^{-c \lambda^\alpha}$, for some $c > 0$,
  $\alpha \in (0, 1)$. Then for $0 < \beta < \alpha$:
  \begin{displaymath}
    \sE(X^\beta) = c^{\beta/\alpha} 
    \frac{\Gamma\left(1 - \beta/\alpha \right)}{\Gamma(1-\beta)}.
  \end{displaymath}
\end{proposicao}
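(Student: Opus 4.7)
The plan is to invoke the Frullani-type representation of fractional powers: for every $x>0$ and $\beta\in(0,1)$,
\begin{equation*}
x^\beta=\frac{\beta}{\Gamma(1-\beta)}\int_0^\infty(1-e^{-\lambda x})\,\lambda^{-1-\beta}\,d\lambda,
\end{equation*}
which follows by the change of variable $u=\lambda x$ together with the identity $\int_0^\infty(1-e^{-u})u^{-1-\beta}du=\Gamma(1-\beta)/\beta$, the latter obtained in turn by a single integration by parts (valid precisely because $0<\beta<1$).

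Applying this identity to $X$ and taking expectations, Tonelli's theorem (the integrand is nonnegative) lets me interchange $\sE$ and the $d\lambda$ integral, yielding
\begin{equation*}
\sE(X^\beta)=\frac{\beta}{\Gamma(1-\beta)}\int_0^\infty\bigl(1-\phi(\lambda)\bigr)\lambda^{-1-\beta}\,d\lambda
=\frac{\beta}{\Gamma(1-\beta)}\int_0^\infty\bigl(1-e^{-c\lambda^\alpha}\bigr)\lambda^{-1-\beta}\,d\lambda.
\end{equation*}
Now I would substitute $u=c\lambda^\alpha$; a direct computation shows $\lambda^{-1-\beta}\,d\lambda=\alpha^{-1}c^{\beta/\alpha}u^{-1-\beta/\alpha}\,du$, so the integral becomes $\frac{c^{\beta/\alpha}}{\alpha}\int_0^\infty(1-e^{-u})u^{-1-\beta/\alpha}\,du$. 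Since $0<\beta/\alpha<1$, I can apply the same Frullani identity once more to evaluate this last integral as $\frac{\alpha}{\beta}\Gamma(1-\beta/\alpha)$. Combining the constants telescopes to the claimed expression $c^{\beta/\alpha}\Gamma(1-\beta/\alpha)/\Gamma(1-\beta)$.

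The only point requiring minor care is the convergence of the improper integrals at both endpoints. Near $\lambda=0$ we have $1-e^{-c\lambda^\alpha}\sim c\lambda^\alpha$, so the integrand behaves like $\lambda^{\alpha-\beta-1}$, which is integrable exactly because $\alpha>\beta$; near $\lambda=\infty$ the factor $1-e^{-c\lambda^\alpha}$ is bounded by $1$ and $\beta>0$ ensures integrability. So the hypothesis $0<\beta<\alpha<1$ is used precisely here, and there is no real obstacle beyond this bookkeeping.
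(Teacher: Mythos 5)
Your proof is correct and is essentially the same argument as the paper's: both express $\sE(X^\beta)$ via the Frullani/Mellin integral representation of the fractional power, pass the expectation inside by Fubini–Tonelli, and finish with the substitution $u = c\lambda^\alpha$ reducing the integral to a Gamma function. The only cosmetic difference is that the paper writes the representation in the form $\sE(X^\beta) = -\frac{1}{\Gamma(1-\beta)}\int_0^\infty \phi'(\lambda)\lambda^{-\beta}\,d\lambda$, whereas you use the integrated-by-parts variant $\frac{\beta}{\Gamma(1-\beta)}\int_0^\infty(1-\phi(\lambda))\lambda^{-1-\beta}\,d\lambda$, which avoids differentiating $\phi$ at the cost of one extra application of the Frullani identity.
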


\begin{proof}
  Using Fubini's theorem and the fact that  $\phi^\prime(\lambda) = -
  \sE(X e^{-\lambda X})$ one can readily check that:
  \begin{displaymath}
    \sE(X^\beta) = - \frac{1}{\Gamma(1-\beta)} \int_0^\infty
    \phi^\prime(\lambda) \lambda^{-\beta} d \lambda
    = c^{\beta/\alpha} \frac{\Gamma\left(1 - \beta/\alpha
      \right)}{\Gamma(1-\beta)}.
    \qedhere
  \end{displaymath}
\end{proof}

\begin{proposicao}
  \label{prop:laplace-transform-gamma}
  For fixed $j, k \in \sN_*$, $j < k$, $x|_j \in \sN_*^j$ and $\lambda >
  0$:
  \begin{equation}
    \label{eq:laplace-transform-gamma}
    \sE \left[ \exp \left\{
        -\lambda \sum_{y|_k \in [x|_j]_k}
        \frac{\widebar{\gamma}_k(y|_k)}{\widebar{\gamma}_j(y|_j)}
    \right\} \right] =
  \exp\left\{ -
    \left[
      \frac{\Gamma(1-\alpha_k)}
      {\Gamma\left(1-\frac{\alpha_k}{\alpha_{k+1}}\right)}
    \right]^{\alpha_{j+1}/\alpha_k} \lambda^{\alpha_{j+1}}
  \right\}.
  \end{equation}

  Therefore $\sum_{y|_k \in [x|_j]_k}
  \frac{\widebar{\gamma}_k(y|_k)}{\widebar{\gamma}_j(y|_j)}$ has an
  $\alpha_{j+1}$-stable distribution and thus is finite \emph{a.s.}.
\end{proposicao}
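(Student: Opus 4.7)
My plan is to prove~(\ref{eq:laplace-transform-gamma}) by backward induction on $j$, for $k$ held fixed, peeling off one level of the tree at a time. The starting observation is that $\widebar\gamma_k(y|_k)/\widebar\gamma_j(y|_j)=\prod_{i=j+1}^{k}\gamma_i(y|_i)$, so that the sum decomposes naturally along the tree rooted at $x|_j$: grouping terms by $y_{j+1}$ yields
\[
\sum_{y|_k\in[x|_j]_k}\frac{\widebar\gamma_k(y|_k)}{\widebar\gamma_j(y|_j)}=\sum_{y_{j+1}\in\sN_*}\gamma_{j+1}(x|_j,y_{j+1})\,S^{(x|_j,y_{j+1})},
\]
with $S^{(x|_j,y_{j+1})}:=\sum_{y|_k\in[(x|_j,y_{j+1})]_k}\widebar\gamma_k(y|_k)/\widebar\gamma_{j+1}(y|_{j+1})$. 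The independence across branches built into the definition of the Poisson families $\underline{\gamma_i}$ ensures that the $S^{(x|_j,y_{j+1})}$ are \emph{i.i.d.}\ in $y_{j+1}$ and jointly independent of $\underline{\gamma_{j+1}}(x|_j)=\{\gamma_{j+1}(x|_j,y_{j+1}):y_{j+1}\in\sN_*\}$.

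For the base case $j=k-1$, the sum is simply $\sum_{y_k}\gamma_k(x|_{k-1},y_k)$, that is, the sum of all points of a PPP with intensity $\mu_k$. By the standard Campbell / L\'evy-Khintchine formula, after the substitution $u=\lambda t$ and one integration by parts,
\[
\sE\left[\exp\left(-\lambda\sum_{y_k}\gamma_k(x|_{k-1},y_k)\right)\right]=\exp\left(-c_k\,\frac{\Gamma(1-\alpha_k)}{\alpha_k}\,\lambda^{\alpha_k}\right),
\]
and plugging in the prescribed $c_k=\alpha_k/\Gamma(1-\alpha_k/\alpha_{k+1})$ reproduces exactly the right-hand side of~(\ref{eq:laplace-transform-gamma}) at $j=k-1$.

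For the inductive step, assume~(\ref{eq:laplace-transform-gamma}) holds with $j+1$ in place of $j$. Then every $S^{(x|_j,y_{j+1})}$ is $\alpha_{j+2}$-stable with Laplace exponent $c^\star\lambda^{\alpha_{j+2}}$, where $c^\star=[\Gamma(1-\alpha_k)/\Gamma(1-\alpha_k/\alpha_{k+1})]^{\alpha_{j+2}/\alpha_k}$. Since $\alpha_{j+1}<\alpha_{j+2}$, Proposition~\ref{prop:stable-moments} delivers $\sE\bigl[(S^{(x|_j,y_{j+1})})^{\alpha_{j+1}}\bigr]=(c^\star)^{\alpha_{j+1}/\alpha_{j+2}}\,\Gamma(1-\alpha_{j+1}/\alpha_{j+2})/\Gamma(1-\alpha_{j+1})$, which is finite. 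Proposition~\ref{prop:ppp-invariance} then identifies $\{\gamma_{j+1}(x|_j,y_{j+1})\,S^{(x|_j,y_{j+1})}:y_{j+1}\in\sN_*\}$ as a PPP on $(0,\infty)$ with intensity $\sE[(S^{(x|_j,1)})^{\alpha_{j+1}}]\,\mu_{j+1}$, and rerunning the Campbell computation of the base case on this new intensity produces a Laplace exponent proportional to $\lambda^{\alpha_{j+1}}$ with prefactor $\sE[(S^{(x|_j,1)})^{\alpha_{j+1}}]\,c_{j+1}\,\Gamma(1-\alpha_{j+1})/\alpha_{j+1}$. The main (and only) bookkeeping task is to check that, with the choice~(\ref{eq:constant-choice}) of $c_{j+1}$, this prefactor telescopes precisely to $[\Gamma(1-\alpha_k)/\Gamma(1-\alpha_k/\alpha_{k+1})]^{\alpha_{j+1}/\alpha_k}$: the $\alpha_{j+1}$ in $c_{j+1}$ cancels the $\alpha_{j+1}$ in the denominator, $\Gamma(1-\alpha_{j+1})$ cancels against itself, and the $\Gamma(1-\alpha_{j+1}/\alpha_{j+2})$ hidden in $c_{j+1}^{-1}$ cancels the Gamma ratio in $\sE[S^{\alpha_{j+1}}]$, closing the induction. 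Finiteness \emph{a.s.}\ then follows because the resulting Laplace transform tends to $1$ as $\lambda\downarrow0$. No deeper obstacle is anticipated; the construction was rigged in~(\ref{eq:constant-choice}) precisely so that this Gamma-function bookkeeping collapses.
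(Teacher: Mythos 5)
Your proof is correct and follows essentially the same route as the paper's: backward induction on $j$, with the inductive step running Campbell's theorem through Propositions \ref{prop:ppp-invariance} and \ref{prop:stable-moments}, and the choice of $c_k$ in \eqref{eq:constant-choice} engineered precisely so that the Gamma factors telescope. The only organizational difference is that you spell out the base case $j=k-1$ explicitly (the paper omits it as analogous) and you phrase the inductive step by first naming the scaled PPP $\{\gamma_{j+1}\cdot S\}$ and its intensity $\sE[S^{\alpha_{j+1}}]\mu_{j+1}$ before applying Campbell, whereas the paper writes the resulting exponent directly; these are the same calculation.
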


\begin{proof}
  This computation will be done by induction on $j$. We omit the base,
  that is, when $j = k - 1$ because it can be done analogously as
  the induction step.

  Taking $0 \leq j < k - 1$, let us assume that
  \eqref{eq:laplace-transform-gamma} is true for $j + 1$, and show
  that it is also true for $j$.

  Using Campbell's Theorem (see eg. \cite{kn:k}, Section 3.2)
  and Propositions \ref{prop:ppp-invariance} and
  \ref{prop:stable-moments}, together with the induction hypothesis,
  we have that:
  \begin{align*}
    & \sE\left[ \exp \left\{
        -\lambda  \sum_{y|_k \in [x|_j]_k}
        \frac{\widebar{\gamma}_k(y|_k)}{\widebar{\gamma}_j(y|_j)}
      \right\} \right]\\ 
    = &
    \sE\left[ \exp \left\{
        -\lambda \sum_{x_{j+1}} \gamma_{j+1}(x|_{j+1})
        \sum_{y|_k \in [x|_{j+1}]_k}
        \frac{\widebar{\gamma}_k(y|_k)}{\widebar{\gamma}_j(y|_{j+1})}
      \right\} \right]\\
    = & \exp \left\{
      - \int_0^\infty  (1 - e^{-\lambda t})
      \sE \left[ \left(
          \sum_{y|_k \in [x|_{j+1}]_k}
          \frac{\widebar{\gamma}_k(y|_k)}{\widebar{\gamma}_j(y|_{j+1})}
        \right)^{\alpha_{j+1}} \right]
        \mu_{j+1} (d t)
    \right\}\\
    = & \exp\left\{ -
      \left[
        \frac{\Gamma(1-\alpha_k)}
        {\Gamma\left(1-\frac{\alpha_k}{\alpha_{k+1}}\right)}
      \right]^{\alpha_{j+1}/\alpha_k} \lambda^{\alpha_{j+1}}
    \right\}. \qedhere
  \end{align*}
\end{proof}

We are interested on the random variables treated on Proposition
\ref{prop:laplace-transform-gamma} and would like to be able to take
limits as $k \to \infty$. But we are unable to do so directly. Instead
we have the following proposition:

\begin{proposicao}
  \label{prop:def-W}
  For every $x|_k \in \sN_*^k$, the following limit exists almost
  surely:
  \begin{equation}
    \label{eq:def-W}
    W(x|_k) := \lim_{n \to \infty}
    \sum_{y|_n \in [x|_k]_n} \left(
      \frac{\widebar{\gamma}_n(y|_n)}{\widebar{\gamma}_k(y|_k)}
    \right)^{\alpha_{n+1}}.
  \end{equation}
  Moreover $W(x|_k)$ has an $\alpha_{k+1}$-stable distribution with
  Laplace transform:
  \begin{equation}
    \label{eq:laplace-w}
    \sE \left[ e^{-\lambda W(x|_k)} \right] = \exp \{-\lambda^{\alpha_{k+1}}\}.
  \end{equation}
\end{proposicao}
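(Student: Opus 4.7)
The plan is to exhibit a bounded martingale that delivers almost sure convergence of the sum, and then identify the limiting distribution through an independent Laplace-transform calculation. For fixed $x|_k$ and $n \geq k$, I would write
$$S_n := \sum_{y|_n \in [x|_k]_n}\left(\frac{\widebar{\gamma}_n(y|_n)}{\widebar{\gamma}_k(y|_k)}\right)^{\alpha_{n+1}},$$
with $S_k = 1$ by convention, and let $\FF_n$ be the $\sigma$-algebra generated by $\{\gamma_j(y|_j): k < j \leq n,\, y|_j \in [x|_k]_j\}$. The whole argument hinges on computing $\sE[e^{-\lambda S_n}\mid \FF_{n-1}]$: split the inner sum off as $\sum_{y_n} \gamma_n(y|_n)^{\alpha_{n+1}}$ per parent $y|_{n-1}$, apply Campbell's formula to the PPP with intensity $c_n t^{-1-\alpha_n}$, make the substitution $u = t^{\alpha_{n+1}}$, and invoke the standard identity $\int_0^\infty (1-e^{-\mu u}) u^{-1-\beta}\, du = \mu^\beta\Gamma(1-\beta)/\beta$ with $\beta = \alpha_n/\alpha_{n+1}$. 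The tailored value of $c_n$ in~(\ref{eq:constant-choice}) is precisely what makes all constants collapse, yielding the clean recursion
$$\sE[e^{-\lambda S_n}\mid \FF_{n-1}] = \exp\left(-\lambda^{\alpha_n/\alpha_{n+1}}\, S_{n-1}\right).$$

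From this I would define $M_n := \exp\left(-\lambda^{\alpha_{n+1}/\alpha_{k+1}}\, S_n\right)$. A direct check using the recursion gives $\sE[M_n \mid \FF_{n-1}] = M_{n-1}$, so $(M_n)$ is a $[0,1]$-valued martingale and converges almost surely to some $M_\infty$. Because $\lambda^{\alpha_{n+1}/\alpha_{k+1}} \to \lambda^{1/\alpha_{k+1}}$ deterministically, I would split into $\{M_\infty > 0\}$, where $S_n \to -\log M_\infty / \lambda^{1/\alpha_{k+1}}$, and $\{M_\infty = 0\}$, where $S_n \to \infty$, concluding that $W(x|_k) := \lim_n S_n$ exists a.s.\ in $[0,\infty]$.

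To identify the law, I would iterate the conditional-Laplace recursion down to $\FF_k$ (with $S_k = 1$) to obtain the unconditional expression $\sE[e^{-\lambda S_n}] = \exp\left(-\lambda^{\alpha_{k+1}/\alpha_{n+1}}\right)$. Sending $n \to \infty$ and using $\alpha_{n+1} \to 1$ gives the pointwise limit $e^{-\lambda^{\alpha_{k+1}}}$, which is continuous at $0$, so $S_n$ converges in distribution to an $\alpha_{k+1}$-stable law. Combined with the a.s.\ convergence in $[0,\infty]$, uniqueness of the limit forces $W(x|_k)$ to have this stable distribution; in particular $W(x|_k) < \infty$ a.s., finishing the proof.

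The main technical obstacle is the inductive Laplace computation: several changes of variables must align exactly so that the prefactor produced by the PPP integral reduces to $1$, which only happens for the specific $c_n = \alpha_n/\Gamma(1-\alpha_n/\alpha_{n+1})$. Without this cancellation, neither the stable recursion nor the martingale $M_n$ would emerge, so identifying $M_n$ as the right $\lambda$-reparameterization of $e^{-\lambda S_n}$ is the key insight of the argument.
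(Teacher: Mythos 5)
Your proposal is correct and follows essentially the same route as the paper: the conditional Laplace recursion via Campbell's theorem (with the specific $c_n$ making the constants cancel), a bounded exponential martingale — your $M_n=\exp(-\lambda^{\alpha_{n+1}/\alpha_{k+1}}S_n)$ is exactly the paper's $Z_n(\cdot)$ after reparameterizing $\lambda$ — whose a.s.\ convergence yields a.s.\ convergence of the exponent, and then identification of the limit law by iterating the recursion to get $\sE[e^{-\lambda S_n}]=\exp(-\lambda^{\alpha_{k+1}/\alpha_{n+1}})\to\exp(-\lambda^{\alpha_{k+1}})$. Your treatment of the $\{M_\infty=0\}$ case and the finiteness conclusion is a slightly more explicit write-up of the same argument, so there is nothing to fix.
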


\begin{proof}
  Without loss of generality, let us assume $k = 0$ and define
  random variables $Z_n(\lambda)$ for $n > k$ and $\lambda > 0$ as:
  \begin{align*}
    Z_n(\lambda) := \exp \left\{ - \sum_{y|_n} \left(
      \lambda \widebar{\gamma}_n(y|_n)
    \right)^{\alpha_{n+1}} \right\}.
  \end{align*}

  Let us show that these variables are a martingale in relation to the
  filtration $(\DD_n)$, where $\DD_n$ is the $\sigma$-algebra
  generated by all Poisson point processes $\{\gamma_j(x|_j): j \leq
  n, \, x|_j \in \sN_*^j\}$. This is done again using Campbell's Theorem:
  \begin{align*}
    \sE\left( Z_{n+1}(\lambda) \middle| \FF_{n} \right) &=
    \sE \left[ \exp \left\{ - \sum_{y|_{n+1}}
        \left(\lambda \widebar{\gamma}_{n+1}(y|_{n+1})
        \right)^{\alpha_{n+2}} \right\} \middle\vert \DD_{n} \right]\\
    &=
    \prod_{y|_n}
    \sE \left[
      \exp \left\{
        - \left( \lambda \widebar{\gamma}_{n}(y|_{n})\right)^{\alpha_{n+2}}
        \sum_{y_{n+1}}
          \gamma_{n+1}(y|_{n+1})
          ^{\alpha_{n+2}}
      \right\}
      \middle| \DD_{n} \right]\\
    &=
    \prod_{y|_n}
    \exp \left\{
        - \int_0^\infty (1 - \exp\left(-
          \left(\lambda \widebar{\gamma}_{n}(y|_{n})\right)^{\alpha_{n+2}}
          t^{\alpha_{n+2}}\right))
        \mu_{n+1}(d t)
      \right\}\\
    &=
    \prod_{y|_n}
    \exp \left\{- \left( \lambda \widebar{\gamma}_{n}(y|_{n})\right)^{\alpha_{n+1}}
    \right\}\\
    &=
    \exp \left\{- \sum_{y|_n}\left(\lambda
        \widebar{\gamma}_{n}(y|_{n})\right)^{\alpha_{n+1}}
    \right\} = Z_n(\lambda)
  \end{align*}

  Since $(Z_n(1))$ is a positive martingale, using a martingale
  convergence theorem (see eg. \cite{kn:d}, Theorem 5.2.9) we conclude
  that $Z_n(1)$ converges \emph{a.s.}. Since $Z_n(1)$ converges, then its
  exponent must converge as well.

  The second claim can be obtained by using the previous result to
  explicitly compute:
  \begin{align}
    \label{eq:laplace-w-1}
    \sE \left[ \exp \left\{ - \lambda
        \sum_{y|_n} \widebar{\gamma}_n(y|_n)^{\alpha_{n+1}}
      \right\}
    \right]
    &=
    \sE \left[ Z_n(\lambda^{1/\alpha_{n+1}}) \right]\\
    \notag
    &=
    \sE \left[ Z_1(\lambda^{1/\alpha_{n+1}}) \right]\\
    \notag
    &= \exp\{ -\lambda^{\alpha_{1}/\alpha_{n+1}}\}
    \xrightarrow{n \to \infty} \exp\{-\lambda^{\alpha_{1}}\}.
    \qedhere
  \end{align}
\end{proof}

\begin{proposicao}
  \label{prop:composition-W}
  The family of random variables $\{ W(x|_k): x|_k \in
  \bigcup_{j=0}^\infty \sN_*^j \}$ satisfies a composition law. Namely
  for every $x|_k \in \sN_*^k$:
  \begin{equation}
    \label{eq:composition-W}
    W(x|_{k}) = \sum_{x_{k+1}} \gamma_{k+1}(x|_{k+1}) W(x|_{k+1})
    \textit{ a.s.}
  \end{equation}
\end{proposicao}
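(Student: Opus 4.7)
The plan is to split the defining sum for $W(x|_k)$ according to the value at level $k+1$, pass to the limit termwise, and then patch up the inevitable loss of mass in the limit via a Laplace-transform comparison using the distributional computation already available from Proposition~\ref{prop:def-W}.

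First I would note that, for $n > k+1$, the factorization $\widebar{\gamma}_n(y|_n)/\widebar{\gamma}_k(y|_k) = \gamma_{k+1}(x|_{k+1}) \cdot \widebar{\gamma}_n(y|_n)/\widebar{\gamma}_{k+1}(y|_{k+1})$ (valid whenever $y|_{k+1} = x|_{k+1}$) gives
$$A_n := \sum_{y|_n \in [x|_k]_n} \left( \frac{\widebar{\gamma}_n(y|_n)}{\widebar{\gamma}_k(y|_k)} \right)^{\alpha_{n+1}} = \sum_{x_{k+1} \in \sN_*} \gamma_{k+1}(x|_{k+1})^{\alpha_{n+1}} S_n(x|_{k+1}),$$
where $S_n(x|_{k+1})$ denotes the analogous partial sum over $[x|_{k+1}]_n$. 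Proposition~\ref{prop:def-W}, intersected over the countably many $x_{k+1}$, yields on a full-measure event that $A_n \to W(x|_k)$ and $S_n(x|_{k+1}) \to W(x|_{k+1})$ for every $x_{k+1}$; combined with $\alpha_{n+1} \to 1$, each summand converges a.s.\ to $\gamma_{k+1}(x|_{k+1}) W(x|_{k+1})$, so Fatou's lemma (for series) delivers the one-sided inequality
$$W(x|_k) \geq R := \sum_{x_{k+1}} \gamma_{k+1}(x|_{k+1}) W(x|_{k+1}) \quad \text{a.s.}$$

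To promote this to equality I plan to match Laplace transforms: the inequality $W(x|_k) \geq R$ a.s.\ forces $\sE[e^{-\lambda W(x|_k)}] \leq \sE[e^{-\lambda R}]$, and if these are in fact equal then the nonnegative random variable $e^{-\lambda R} - e^{-\lambda W(x|_k)}$ has zero expectation, hence vanishes a.s., giving $W(x|_k) = R$ a.s. The Laplace transform of $W(x|_k)$ is $e^{-\lambda^{\alpha_{k+1}}}$ by Proposition~\ref{prop:def-W}. For $R$, I would condition on $\underline{\gamma_{k+1}}$: under this conditioning the $W(x|_{k+1})$ are independent $\alpha_{k+2}$-stable variables (they live on disjoint subtrees of higher-level Poisson marks), yielding $\sE[e^{-\lambda R} \mid \underline{\gamma_{k+1}}] = \exp\{-\lambda^{\alpha_{k+2}} \sum_{x_{k+1}} \gamma_{k+1}(x|_{k+1})^{\alpha_{k+2}}\}$. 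A Campbell's-theorem computation together with a routine substitution then reduces matters to verifying that $c_{k+1} \Gamma(1-\alpha_{k+1}/\alpha_{k+2})/\alpha_{k+1} = 1$, which is exactly the content of the definition of $c_{k+1}$ in~\eqref{eq:constant-choice}.

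The main obstacle is the exchange of the $n \to \infty$ limit with the infinite sum over $x_{k+1}$: the exponents $\alpha_{n+1}$ are moving and no obvious summable envelope is available, so a direct dominated-convergence argument looks awkward. The route above sidesteps this difficulty: Fatou supplies one inequality essentially for free, and the specific choice of constants in~\eqref{eq:constant-choice} is precisely what makes the two Laplace transforms match, furnishing the other half and thus the claimed a.s.\ identity.
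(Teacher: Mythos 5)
Your proof is correct, and it takes a genuinely different route from the paper's. The paper truncates the sum over $x_{k+1}$ at a threshold $\gamma_{k+1}(x|_{k+1}) > \epsilon$, exchanges the $n\to\infty$ limit with the (finite) truncated sum, and then uses a Laplace-transform/Campbell computation to show the small-$\gamma$ remainder vanishes in probability as $\epsilon \to 0$; equality then follows because two probability limits of the same sequence coincide a.s. Your approach replaces the $\epsilon$-surgery with a Fatou argument that yields the one-sided a.s.\ inequality $W(x|_k) \geq R$ directly, and then shows the two Laplace transforms agree --- conditioning on $\underline{\gamma_{k+1}}$, using the (conditional) independence and $\alpha_{k+2}$-stability of the $W(x|_{k+1})$, applying Campbell's theorem for $\sum_{x_{k+1}} \gamma_{k+1}(x|_{k+1})^{\alpha_{k+2}}$, and observing that the choice of $c_{k+1}$ in~\eqref{eq:constant-choice} is exactly what makes the exponent collapse to $\lambda^{\alpha_{k+1}}$. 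The closing observation that a nonnegative random variable with zero mean vanishes a.s.\ then upgrades the distributional identity to an almost-sure one, which is legitimate only because you have the stochastic domination in hand --- this is the step that the paper's two-limits-agree argument replaces. Both proofs ultimately hinge on the same Campbell calculation and the same specific constant, but yours isolates where the constant matters more transparently and avoids the truncation/limit-interchange bookkeeping.
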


\begin{proof}
  Fix a realization such that \eqref{eq:def-W} is true for every
  $x|_k \in \cup_{j=1}^\infty \sN_*^j$ and fix an arbitrary $\epsilon > 0$, then:
  \begin{align}
    \notag
    W(x|_k) &= \lim_{n \to \infty} \sum_{y_n \in [x|_k]_n} \left(
      \frac{\widebar{\gamma}_n(y|_n)}{\widebar{\gamma}_k(x|_k)}
    \right)^{\alpha_{n+1}}\\
    \notag
    &= \lim_{n \to \infty}
    \sum_{x_{k+1}} (\gamma_{k+1}(x|_{k+1}))^{\alpha_{n+1}} \sum_{y_n \in [x|_{k+1}]_n} \left(
      \frac{\widebar{\gamma}_n(y|_n)}{\widebar{\gamma}_{k+1}(x|_{k+1})}
    \right)^{\alpha_{n+1}}\\
    \label{eq:composition-W-1}
    &= \lim_{n \to \infty}
    \sum_{\substack{x_{k+1}:\\ \gamma_{k+1}(x|_{k+1}) > \epsilon}}
    (\gamma_{k+1}(x|_{k+1}))^{\alpha_{n+1}} \sum_{y_n \in [x|_{k+1}]_n} \left(
      \frac{\widebar{\gamma}_n(y|_n)}{\widebar{\gamma}_{k+1}(x|_{k+1})}
    \right)^{\alpha_{n+1}}\\
    \label{eq:composition-W-2}
    &+ \lim_{n \to \infty}
    \sum_{\substack{x_{k+1}:\\ \gamma_{k+1}(x|_{k+1}) \leq \epsilon}}
    (\gamma_{k+1}(x|_{k+1}))^{\alpha_{n+1}} \sum_{y_n \in [x|_{k+1}]_n} \left(
      \frac{\widebar{\gamma}_n(y|_n)}{\widebar{\gamma}_{k+1}(x|_{k+1})}
    \right)^{\alpha_{n+1}}.
  \end{align}

  Let us treat terms these two terms separately and show that their
  respective limits exist in probability. For
  \eqref{eq:composition-W-1}, since the outermost sum is finite
  and $\alpha_n \to 1$ as $n \to \infty$, we have that:
  \begin{align*}
    \eqref{eq:composition-W-1} &=
    \sum_{\substack{x_{k+1}:\\ \gamma_{k+1}(x|_{k+1}) > \epsilon}}
    \gamma_{k+1}(x|_{k+1})
    \lim_{n \to \infty}
    \sum_{y_n \in [x|_{k+1}]_n} \left(
      \frac{\widebar{\gamma}_n(y|_n)}{\widebar{\gamma}_k(x|_{k+1})}
    \right)^{\alpha_{n+1}}\\
    &=
    \sum_{\substack{x_{k+1}:\\ \gamma_{k+1}(x|_{k+1}) > \epsilon}}
    \gamma_{k+1}(x|_{k+1}) W(x|_{k+1})
    \xrightarrow{\epsilon \to 0}
    \sum_{x_{k+1}}
    \gamma_{k+1}(x|_{k+1}) W(x|_{k+1}).
  \end{align*}

  Let $\DD'_k=\DD'_k(x|_k)$ be the $\sigma$-algebra generated by 
  $\{\gamma_{k+1}(x|_{k+1}) : x_{k+1} \in \sN_*\}$.  Using
  \eqref{eq:laplace-w-1}, we can compute the Laplace transform of
  \eqref{eq:composition-W-2} as:
  \begin{align*}
    \sE &\left[ \exp \left\{
        - \lambda
        \sum_{\substack{x_{k+1}:\\ \gamma_{k+1}(x|_{k+1}) \leq \epsilon}}
        (\gamma_{k+1}(x|_{k+1}))^{\alpha_{n+1}} \sum_{y_n \in [x|_{k+1}]_n} \left(
          \frac{\widebar{\gamma}_n(y|_n)}{\widebar{\gamma}_{k+1}(x|_{k+1})}
        \right)^{\alpha_{n+1}}
      \right\}
    \right]\\
    &=
    \sE \left[
      \prod_{\substack{x_{k+1}:\\ \gamma_{k+1}(x|_{k+1}) \leq \epsilon}}
      \sE \left[ \exp \left\{
          - \lambda
          (\gamma_{k+1}(x|_{k+1}))^{\alpha_{n+1}} \sum_{y_n \in [x|_{k+1}]_n} \left(
          \frac{\widebar{\gamma}_n(y|_n)}{\widebar{\gamma}_{k+1}(x|_{k+1})}
        \right)^{\alpha_{n+1}}
      \right\}
    \middle| \DD'_k \right] \right]\\
    &=
    \sE \left[
      \prod_{\substack{x_{k+1}:\\ \gamma_{k+1}(x|_{k+1}) \leq \epsilon}}
      \exp\left\{
          - \left( \lambda
          (\gamma_{k+1}(x|_{k+1}))^{\alpha_{n+1}} \right)^{\frac{\alpha_{k+2}}{\alpha_{n+1}}}
      \right\}
    \right]\\
    &\xrightarrow{n \to \infty}
    \sE \left[
      \exp\left\{
        -  \lambda^{\alpha_{k+2}}
        \sum_{\substack{x_{k+1}:\\ \gamma_{k+1}(x|_{k+1}) \leq \epsilon}}
        (\gamma_{k+1}(x|_{k+1}))^{\alpha_{k+2}}
      \right\}
    \right]\\
    &= \exp\left\{
      - \int_0^\epsilon \left(
        1 - e^{-{(\lambda x)}^{\alpha_{k+2}}}
      \right) \mu_{k+1}(d x)
    \right\} \xrightarrow{\epsilon \to 0} 1.
  \end{align*}

  We used Campbell's Theorem for the last passage. With this we can
  conclude that the quantity in \eqref{eq:composition-W-2} converges
  in probability to $0$ as $\epsilon \to 0$.

  Finally, note that we have shown that \eqref{eq:composition-W-1} $+$
  \eqref{eq:composition-W-2} converge in probability to both $W(x|_k)$
  and $\sum_{x_{k+1}} \gamma_{k+1}(x|_{k+1}) W(x|_{k+1})$ as $\epsilon
  \to 0$, which implies that these last quantities are almost surely
  equal.
\end{proof}



%
\section{Limiting clocks}
\label{sec:clocks}

Our objective in this section is to define the limiting clocks
$\theta_k^\infty$ mentioned in Section
\ref{sec:constr-main-result}. For this purpose we will introduce a
perturbation to the clocks using the variables $W(x|_k)$ introduced in
Proposition \ref{prop:def-W}. Theorem \ref{thm:clock-conv} of the next
section will prove that this perturbation does not affect the limit.

\begin{definicao}
  \label{def:new-clock}
  We define the adjusted clocks as:
  \begin{align}
    \label{eq:new-clock}
    \widetilde{\clock}_j(t) &:= \sum_{x \in \sN_*} \sum_{i=1}^{N^{j,x}(t)}
    W(X_{j-1}(\sigma_i^{j,x}) x)
    \gamma_j(X_{j-1}(\sigma_i^{j,x}) x)
    T^{j,x}_i\\
    \notag
    \widetilde{\theta}_k^n &:= \widetilde{\clock}_n \circ \clock_{n-1}
    \circ \cdots \circ \clock_k.
  \end{align}
  Let us also denote the time spent by the K process $X_k$ on a state
  $x|_k$ up to time $t$ as $L_k(x|_k, t)$, that is:
  \begin{align}
    \label{eq:tempo-estado}
    L_k(x|_k, t) = \int_0^t \ind\{X_k(s) = x|_k\} d s.
  \end{align}
\end{definicao}

For almost every fixed environment 
$\{\gamma_k(x|_k): k \in \sN_*, x|_k \in \sN_*^k \}$, we have that 
$\theta_1^n$ is a subordinator (we will prove this on Lemma \ref{lema:theta-subordinator}), 
but $\theta_k^n$, for $k > 1$, is not. This creates some complications. 
To circumvent most of these problems we will break $\theta_k^n$ into a sum of functions that
are subordinators.

\begin{definicao}
  \label{def:pedacos-clock}
  For fixed $n > k \geq 1$, let $\nu_{k+1}^n$ be the random measure on the
  Borel sets of $[0, \infty)$ such that $\nu_{k+1}^n([0, t]) =
  \theta_{k+1}^n(t)$.  For a fixed $x|_k \in \sN^k_*$, we define:
  \begin{displaymath}
    \theta_{x|_k}^n(t) :=  \nu_{k+1}^n \left( \left\{
        s \in [0, \infty): L(x|_k, s) \leq t, \, X_k(s) = x|_k
      \right\} \right).
  \end{displaymath}
  Figure \ref{fig:pedacos-clock} illustrates this definition. The
  intervals marked on the abscissa of Figure \ref{fig:pedacos-clock-1}
  are the ones where $X_{k} = x|_k$.
\end{definicao}

\begin{figure}[htb!]
  \centering
  \begin{subfigure}[b]{0.45\textwidth}
    \centering
    \includegraphics{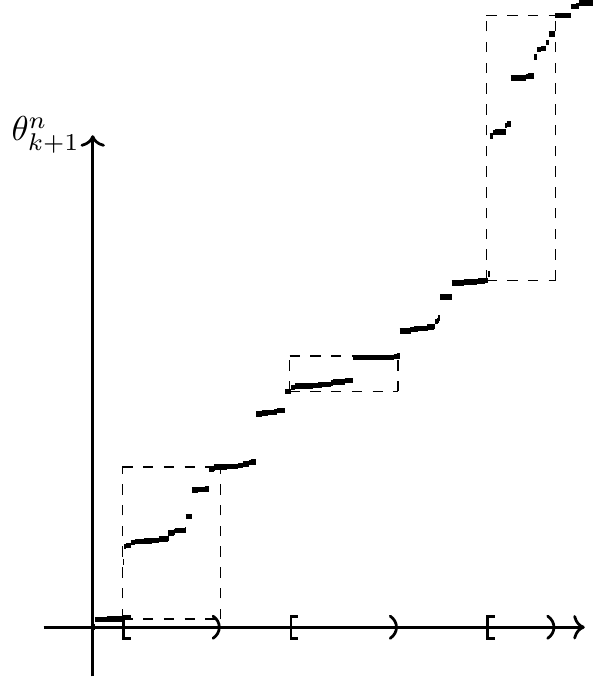}
    \caption{A realization of $\theta_{k+1}^n$}
   \label{fig:pedacos-clock-1}
  \end{subfigure}
  \begin{subfigure}[b]{0.45\textwidth}
    \centering
    \includegraphics{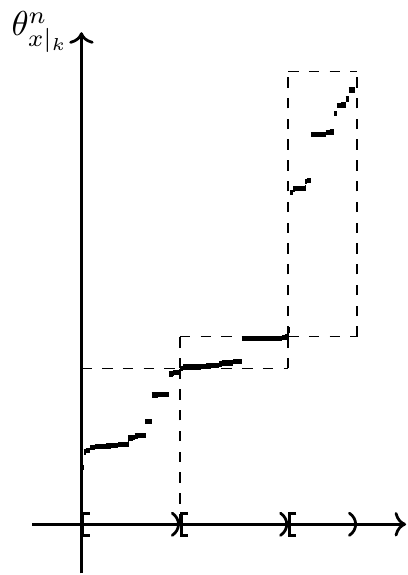}
    \caption{How to construct $\theta_{x|_k}^n$ from $\theta_{k+1}^n$}
   \label{fig:pedacos-clock-2}
  \end{subfigure}
  \caption{Construction of $\theta_{x|_{k}}^n$}
  \label{fig:pedacos-clock}
\end{figure}

\begin{observacao}
  \label{obs:pedacos-clock}
  Note that $\theta_{x|_k}^n$ has the same law as $\theta_1^{n-k}$,
  but with the index of the $\alpha$'s shifted, that is, it has the
  same law as a process $\widehat{\theta}_1^{n-k}$, constructed the
  same way as $\theta_1^{n-k}$, but with parameters
  $(\widehat{\alpha}_i)$, where $\widehat{\alpha}_i = \alpha_{i+k}$.

  These processes are also independent for a fixed $n$ and varying
  $x|_k$. And it is true that:
  \begin{equation}
    \theta_{k+1}^n (t) = \sum_{x|_k} \theta_{x|_k}^n (L_k(x|_k, t))
    \textit{ a.s.}
  \end{equation}
\end{observacao}

\begin{proposicao}
  \label{prop:laplace-clock}
  For $n > k \geq 0$, the Laplace transforms of $\theta_{k+1}^n(t)$ and
  $\widetilde{\theta}_{k+1}^n(t)$ satisfy
 \begin{align}
    \notag
    &\sE \left[ \exp\left\{
        - \lambda \theta_{k+1}^n(t)
      \right\} \right]\\ &=
    \sE \left[ \exp \left\{
        - \sum_{x|_k \in \sN^k_*} L_k(x|_k, t)
        \sum_{x_{k+1}} h_{x|_{k+1}} \left(
          \cdots
          \sum_{x_n} h_{x|_n} \left(\lambda \right)
          \cdots
        \right)
      \right\} \right],\\\notag&\\\notag
    &\sE \left[ \exp\left\{
        - \lambda \widetilde{\theta}_{k+1}^n(t)
      \right\} \right]\\\label{eq:laplace-clock} &=
    \sE \left[ \exp \left\{
        - \sum_{x|_k \in \sN^k_*} L_k(x|_k, t)
        \sum_{x_{k+1}} h_{x|_{k+1}} \left(
          \cdots
          \sum_{x_n} h_{x|_n} \left(\lambda W(x|_n)\right)
          \cdots
        \right)
      \right\} \right],
  \end{align}
   where
   $$h_{x|_k}(\lambda) := \frac{\lambda \gamma_k(x|_k)}{1 + \lambda \gamma_k(x|_k)},$$ 
  with the convention that $\sN^0_* = \{\emptyset\}$ and
  $L_0(\emptyset, t) = t$.
\end{proposicao}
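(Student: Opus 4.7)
My approach is reverse induction on $k$ (equivalently, induction on the composition depth $n-k$), with Campbell's formula for marked Poisson point processes serving as the engine at every stage. I will detail the argument for $\theta_{k+1}^n$; the tilde version follows by the same reasoning with a single modification in the base case. To lighten notation, abbreviate the nested $h$-expression as $G_k(x|_k):=\sum_{x_{k+1}}h_{x|_{k+1}}\bigl(\cdots\sum_{x_n}h_{x|_n}(\lambda)\cdots\bigr)$, which is $\underline{\gamma}$-measurable in $x|_k$. The strengthened conditional identity I aim to prove is
\[
\sE_\gamma\bigl[e^{-\lambda\theta_{k+1}^n(t)}\bigm|X_k\bigr] = \exp\Bigl(-\sum_{x|_k}L_k(x|_k,t)\,G_k(x|_k)\Bigr),
\]
from which the proposition is recovered by taking $\sE$ on both sides.

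In the base case $k=n-1$ we have $\theta_n^n=\clock_n$, and, conditionally on $X_{n-1}$ and $\underline{\gamma}$, the pairs $\{(\sigma_i^{n,x},T_i^{n,x})\}$ form, for each $x\in\sN_*$, an independent marked Poisson process on $[0,\infty)^2$ with intensity $ds\,e^{-u}du$. Applying Campbell's formula to $\clock_n(t)$ and integrating out $u$ via $\int_0^\infty(1-e^{-fu})e^{-u}du=f/(1+f)$ produces the factor $h_{x|_n}(\lambda)$; the occupation-time identity $\int_0^t f(X_{n-1}(s))\,ds=\sum_{x|_{n-1}}L_{n-1}(x|_{n-1},t)f(x|_{n-1})$ then rearranges this into the required form. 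For the inductive step, assuming the identity at level $k+1$, I decompose $\theta_{k+1}^n(t)=\theta_{k+2}^n(\clock_{k+1}(t))$ and apply the induction hypothesis at $s=\clock_{k+1}(t)$ (legitimate because $\clock_{k+1}(t)$ is measurable with respect to the $\sigma$-algebra generated by $X_{k+1}$ and $\underline{\gamma}$, while the randomness driving $\theta_{k+2}^n$ is conditionally independent of this given $\underline{\gamma}$), obtaining
\[
\sE_\gamma\bigl[e^{-\lambda\theta_{k+1}^n(t)}\bigm|X_{k+1}\bigr] = \exp\Bigl(-\sum_{x|_{k+1}}L_{k+1}(x|_{k+1},\clock_{k+1}(t))\,G_{k+1}(x|_{k+1})\Bigr).
\]
Expanding $L_{k+1}(x|_kx_{k+1},\clock_{k+1}(t))=\sum_{i:\,\sigma_i^{k+1,x_{k+1}}\le t,\,X_k(\sigma_i^{k+1,x_{k+1}})=x|_k}\gamma_{k+1}(x|_{k+1})T_i^{k+1,x_{k+1}}$ rewrites the exponent as a functional of the level-$(k+1)$ marked Poisson processes, and a second Campbell computation conditionally on $X_k$ and $\underline{\gamma}$ collapses the $u$-integral into $h_{x|_{k+1}}\bigl(G_{k+1}(x|_{k+1})\bigr)$; summing over $x_{k+1}$ reassembles $G_k(X_k(s))$ and the occupation-time identity in $s$ completes the step.

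For the tilde identity only the base case differs: the marks of $\widetilde{\clock}_n$ carry the additional, $\underline{\gamma}$-measurable factor $W(X_{n-1}(\sigma_i^{n,x})x)$, so the base-case Campbell integration yields $h_{x|_n}(\lambda W(x|_n))$ in place of $h_{x|_n}(\lambda)$; the remainder of the induction goes through verbatim. I expect the main technical point to be the measurability and conditioning check in the inductive step — namely verifying that substituting the random time $s=\clock_{k+1}(t)$ into a formula derived for deterministic $s$ is legitimate, which in turn rests on the stratified conditional independence (given $\underline{\gamma}$) of the level-indexed random sources $\{N^{j,\cdot},T^{j,\cdot}\}_{j\ge k+2}$ from the lower-level randomness that determines $X_{k+1}$ and $\clock_{k+1}$. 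Once this is carefully set up, both Campbell integrations are routine.
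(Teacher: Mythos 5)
Your argument is correct, and it takes a genuinely different route from the paper's. You do reverse induction on $k$ (equivalently on composition depth $n-k$), peeling off $\clock_{k+1}$, i.e.\ the \emph{first-applied} clock in the composition $\theta_{k+1}^n=\clock_n\circ\cdots\circ\clock_{k+1}$; this corresponds to integrating out the \emph{outermost} $h_{x|_{k+1}}$ at each step. The paper instead peels from the top: it introduces auxiliary variables $Z_{x|_n}$, proves the $k=0$ case by induction on $n$ increasing, integrating out the level-$n$ Poisson process (the last-applied clock, the innermost $h_{x|_n}$) at each step and pushing the $h$-factor into a new $Z_{x|_{n-1}}$; the general $k$ is then obtained via the decomposition $\theta_{k+1}^n(t)=\sum_{x|_k}\theta^n_{x|_k}(L_k(x|_k,t))$ of Remark~\ref{obs:pedacos-clock}. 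Both proofs use the same two engines — the identity $L_j(x|_j,\clock_j(s))=\sum_{i\le N^{j,x_j}(s)}\ind\{X_{j-1}(\sigma_i^{j,x_j})=x|_{j-1}\}\gamma_j(x|_j)T_i^{j,x_j}$ and Campbell's formula — so the mathematics is dual rather than new, but your version treats general $k$ directly and in a single pass, avoiding the detour through the $\theta_{x|_k}^n$-decomposition, while the paper's $Z$-variable device has the advantage of handling the plain and tilde clocks simultaneously (you handle them by a modified base case).

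One technical point you should tighten: the claim that ``$\clock_{k+1}(t)$ is measurable with respect to $\sigma(X_{k+1},\underline{\gamma})$'' is dubious, since $X_{k+1}$ records only the observed-time trajectory, not the underlying Poisson times $\sigma_i^{k+1,\cdot}$, so the map from $(X_k,N^{k+1},T^{k+1})$ to $X_{k+1}$ is not clearly invertible. The fix is to state your strengthened identity with the conditioning $\sigma$-algebra $\GG_j$ (all dynamical information up to level $j$, as the paper uses) in place of $\sigma(X_j)$. Then $\clock_{k+1}(t)$ is manifestly $\GG_{k+1}$-measurable, $\{N^j,T^j\}_{j\ge k+2}$ are independent of $\GG_{k+1}$, the freezing of $s=\clock_{k+1}(t)$ in the induction hypothesis is justified by the standard independence lemma, and the tower property $\GG_k\subset\GG_{k+1}$ makes the step valid. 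Since the right-hand side of your conditional identity is in fact $\sigma(X_k,\underline{\gamma})$-measurable, the stated form of the identity is unchanged; it is only the intermediate conditioning in the proof that needs to be on $\GG_{k+1}$.
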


\begin{proof}
  Let $\EE_n$ be the $\sigma$-algebra generated by all variables
  involved on the construction up to level $n$ and take $\{Z_{x|_n}:
  x|_n \in \sN_*^n \}$ an arbitrary family of positive independent
  random variables, this family will be assumed independent of
  $\EE_n$.

  Let us start by proving that:
  \begin{align}
    \label{eq:laplace-clock-1}
    \sE \left[ \exp\left\{
        - \sum_{x|_n} Z_{x|_n} L_n(x|_n, \theta_1^n(t))
      \right\} \right] &=
    \sE \left[ \exp \left\{
        -t \sum_{x_1} h_{x|_1} \left(
          \cdots
          \sum_{x_n} h_{x|_n} \left( Z_{x|_n} \right)
          \cdots
        \right)
      \right\} \right].
  \end{align}

  The case $k = 0$ will follow by taking $Z_{x|_n} = \lambda$ and
  $Z_{x|_n} = \lambda W(x|_n)$ for $\theta_1^n$ and $\widetilde{\theta}_1^n$
  respectively.

  Note that, because of the way that the K process was constructed in
  \eqref{eq:def-proc-k-finito}, and the fact that the increments of a Poisson
  Process are independent and stationary, then:
  \begin{align*}
    L_n(x|_n, \clock_n(t)) &= \sum_{i=1}^{N^{n, x_n}(t)}
    \ind\{ X_{n-1}(\sigma_i^{n, x_n}) = x|_{n-1} \}
    \gamma_n(x|_n) T_i^{n, x_n}\\
    & \substack{\DD \\ =}
     \sum_{i=1}^{N^{n, x_n}(L_{n-1}(x|_{n-1}, t))}
     \gamma_n(x|_n) T_i^{n, x_n}
  \end{align*}

  Letting $\FF_n := \sigma(\EE_{n-1}, Z_{x|_n}: x|_n \in \sN_*^n)$, we
  can compute:
  \begin{align*}
    \sE \left[ e^{- Z_{x|_n} L_n(x|_n, \theta_1^n(t))} \middle| \FF_n \right]
    &= 
    \sE \left[
      \exp\left\{
        -Z_{x|_n} \sum_{i=1}^{N^{n, x_n}(L_{n-1}(x|_{n-1}, \theta_1^{n-1}(t)))}
        \gamma_n(x|_n) T_i^{n, x_n}
      \right\}
      \middle| \FF_{n}
    \right]\\
    &=
    \exp\left\{
      - L_{n-1}(x|_{n-1}, \theta_1^{n-1}(t))
      h_{x|_n}(Z_{x|_n})
    \right\}
  \end{align*}

  Now let us return to \eqref{eq:laplace-clock-1}. We will prove that
  equality by induction on $n$. The base, $n = 1$, is obtained by
  simple inspection. Assuming the result to be true for $n - 1$, we
  can write:
  \begin{align*}
    &\sE \left[ \exp\left\{
        - \sum_{x|_n} Z_{x|_n} L_n(x|_n, \theta_1^n(t))
      \right\} \right]
    =
    \sE \left[  \sE \left[ \exp\left\{
          - \sum_{x|_n} Z_{x|_n} L_n(x|_n, \theta_1^n(t))
        \right\} \middle| \FF_n \right] \right]\\
    &=
    \sE \left[
      \prod_{x|_n}
      \sE \left[ \exp\left\{
          - Z_{x|_n} L_n(x|_n, \theta_1^n(t))
        \right\} \middle| \FF_n \right]
    \right]\\
    &= \sE \left[
      \exp\left\{
        - \sum_{x|_{n-1}} L_n(x|_{n-1}, \theta_1^{n-1}(t))
        \sum_{x_n} h_{x|_n}(Z_{x|_n})
      \right\}
    \right].
  \end{align*}

  Taking $Z_{x|_{n-1}} := \sum_{x_n} h_{x|_n} (Z_{x|_n})$, we can apply
  the induction hypothesis, obtaining \eqref{eq:laplace-clock-1}, from
  which we conclude the case $k = 0$.

  For the general case, using Remark \ref{obs:pedacos-clock}, we can write
  \begin{align*}
    \sE \left[
      e^{- \lambda \theta_{k+1}^n (t)}
    \right]
    &= \sE \left[
      \exp\left\{
        - \lambda \sum_{x|_k} \theta_{x|_k}^n (L_k(x|_k, t))
      \right\}
    \right]\\
    &= \sE \left[
      \prod_{x|_k}
      \sE \left[
        \exp\left\{
          - \lambda \theta_{x|_k}^n (L_k(x|_k, t))
        \right\}
        \middle|
        \EE_k
      \right]
    \right]\\
    &= \sE \left[
      \exp\left\{
        - \sum_{x|_k} L_k(x|_k, t) Z_{x|_k}^n
      \right\}
    \right].
  \end{align*}

  The corresponding result to $\widetilde{\theta}_{k+1}^n$ can be
  proven analogously.
\end{proof}

\begin{observacao}
  \label{obs:lebesgue-nula-integrado}
  For every $j \leq k$, note that the set $\{t \geq 0: \sP(X_{k, j}(t)
  = \infty) > 0 \}$ has null Lebesgue measure almost surely. This is a
  consequence of Fubini's Theorem and Remark
  \ref{obs:infinitos-niveis}:
  \begin{displaymath}
    \int_0^\infty  \sP(X_{k, j}(t) = \infty) d t
    = \sE \left[
      \int_0^\infty
      \ind \left\{
        X_{k, j}(t) = \infty
      \right\} d t
    \right] = 0.
  \end{displaymath}
\end{observacao}

\begin{teorema}
  \label{thm:limit-clock}
  For every $k \in \sN_*$ there exists almost surely a
  right-continuous non decreasing process $\theta_k^\infty$ such that
  $\lim_{n \to \infty} \widetilde{\theta}_k^n(t) = \theta_k^\infty
  (t)$ \emph{a.s.}, $t \in D_k$, where $D_k$ is a countable,
  deterministic and dense set of $[0, \infty)$. Furthermore, for
  almost every $\underline\gamma$, we have
  \begin{equation}
    \label{eq:expected-limit-clock}
    \sE_\gamma\left(\theta_{x|_k}^\infty(t)\right) \leq t W(x|_k).
  \end{equation}
\end{teorema}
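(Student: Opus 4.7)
The plan is to show that, for each fixed $t \geq 0$ and each $k$, the sequence $(\widetilde{\theta}_k^n(t))_{n \geq k}$ is a non-negative martingale with respect to the filtration $\HH_n := \EE_n \vee \sigma(\underline{\gamma})$, where $\EE_n$ is the $\sigma$-algebra generated by the construction at levels up to $n$ (as in the proof of Proposition~\ref{prop:laplace-clock}). Note that $\widetilde{\theta}_k^n(t)$ is $\HH_n$-measurable because the extra $W$-factors bring in information from levels above $n$ that is captured by $\sigma(\underline{\gamma})$. Given this, the martingale convergence theorem applied at each fixed $t$ will produce an almost sure limit; choosing $D_k := \sQ \cap [0,\infty)$ and intersecting the a.s.\ events over $t \in D_k$ then yields joint convergence on $D_k$. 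The monotonicity of each $\widetilde{\theta}_k^n$ in $t$ transfers to the limit on $D_k$, so $\theta_k^\infty$ can be extended to all of $[0,\infty)$ by right-continuity.

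The engine of the martingale property is the identity
\begin{equation*}
\sE\!\left[\,\widetilde{\clock}_{n+1}(\clock_n(r)) \,\middle|\, \HH_n \,\right] \;=\; \widetilde{\clock}_n(r),
\end{equation*}
valid for any $\HH_n$-measurable $r \geq 0$; applying it to $r = \clock_{n-1} \circ \cdots \circ \clock_k(t)$ gives $\sE[\widetilde{\theta}_k^{n+1}(t) \mid \HH_n] = \widetilde{\theta}_k^n(t)$. To establish the identity I would use the independence of $\{T_i^{n+1, y}, \sigma_i^{n+1, y}\}_{i, y}$ from $\HH_n$ together with Campbell's theorem to compute, for any $\HH_n$-measurable $s$,
\begin{equation*}
\sE\!\left[\,\widetilde{\clock}_{n+1}(s) \,\middle|\, \HH_n \,\right] \;=\; \int_0^s \sum_{y \in \sN_*} W(X_n(u)\, y)\, \gamma_{n+1}(X_n(u)\, y)\, du.
\end{equation*}
The composition law of Proposition~\ref{prop:composition-W} collapses the inner sum to $W(X_n(u))$; decomposing $[0, \clock_n(r)]$ into the sub-intervals $[\clock_n(\sigma_j^{n, z}-), \clock_n(\sigma_j^{n, z}))$, on each of which $X_n(u) = X_{n-1}(\sigma_j^{n, z})\, z$ and whose length is $\gamma_n(X_{n-1}(\sigma_j^{n, z})\, z)\, T_j^{n, z}$, recovers $\widetilde{\clock}_n(r)$ from its defining formula \eqref{eq:new-clock}.

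For the bound \eqref{eq:expected-limit-clock} I would run the same martingale scheme on the piece $\widetilde{\theta}_{x|_k}^n$, defined from $\widetilde{\theta}_{k+1}^n$ exactly as in Definition~\ref{def:pedacos-clock}; by an essentially identical computation, $(\widetilde{\theta}_{x|_k}^n(t))_n$ is an $\HH_n$-martingale, so $\sE_\gamma[\widetilde{\theta}_{x|_k}^n(t)] = \sE_\gamma[\widetilde{\theta}_{x|_k}^{k+1}(t)]$. The right-hand side is a direct Campbell computation whose value equals $t \sum_y W(x|_k\, y)\, \gamma_{k+1}(x|_k\, y) = t\, W(x|_k)$, the last equality once more being the composition law. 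Fatou's lemma applied to the a.s.\ limit $\theta_{x|_k}^\infty(t) = \lim_n \widetilde{\theta}_{x|_k}^n(t)$ then delivers \eqref{eq:expected-limit-clock}.

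The hard part will be the bookkeeping in the martingale identity, particularly the time-change $s = \clock_n(r)$ and the passage between an integral against $W(X_n(\cdot))$ on the $\clock_n$-time scale and a sum of $W$-weighted jumps of $\widetilde{\clock}_n$ at the Poisson epochs $\sigma_j^{n, z}$. Once this pivot is carried out, the remaining steps are routine invocations of martingale convergence and Fatou.
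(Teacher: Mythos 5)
Your martingale scheme is the one the paper uses: fix $\underline\gamma$ (or, equivalently, absorb $\sigma(\underline\gamma)$ into the filtration as you do), show $\sE_\gamma[\widetilde\theta_k^{n+1}(t)\mid\GG_n]=\widetilde\theta_k^n(t)$ via the composition law $\sum_{x_{k+1}}W(x|_{k+1})\gamma_{k+1}(x|_{k+1})=W(x|_k)$ and the rewriting $\widetilde\clock_{n+1}$-expectation $=\int_0^{\clock_n(r)}W(X_n(u))\,du=\sum_{x|_n}W(x|_n)L_n(x|_n,\clock_n(r))$, then invoke martingale convergence at each fixed $t$, and deduce the bound \eqref{eq:expected-limit-clock} for the pieces $\widetilde\theta_{x|_k}^n$ from the martingale property together with Fatou. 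All of that matches the paper's proof.

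There is one genuine gap, and it sits precisely where you write ``so $\theta_k^\infty$ can be extended to all of $[0,\infty)$ by right-continuity.'' Martingale convergence gives you, for each $t\in D_k$, an a.s.\ limit, and this limit is nondecreasing in $t$ on $D_k$. You can always define a right-continuous extension $f(t):=\lim_{s\downarrow t,\,s\in D_k}\theta_k^\infty(s)$ on all of $[0,\infty)$, but for the theorem's statement --- that $\widetilde\theta_k^n(t)\to\theta_k^\infty(t)$ \emph{for} $t\in D_k$, with $\theta_k^\infty$ right-continuous --- you must show that this right-continuous extension actually agrees with the martingale limit at every $t\in D_k$; that is, you must prove $\theta_k^\infty(t+)=\theta_k^\infty(t)$ a.s.\ for $t\in D_k$, and this is not automatic for a monotone limit of monotone functions. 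The paper spends the second half of its proof on exactly this: for $k=1$ it computes, via Proposition \ref{prop:laplace-clock} and the stationarity of increments of $\theta_1^n$, the Laplace transform of $\theta_1^\infty(t+)-\theta_1^\infty(t)$ and shows it is identically $1$; and for $k>1$ it reduces to the $k=1$ case by confining attention to $t$ inside a constancy interval of $X_{k-1}$. That reduction is also why the paper does \emph{not} take $D_k=\sQ\cap[0,\infty)$ for $k>1$, but rather a countable dense subset of $\{t\geq 0:\sP(X_{k-1}(t)\in\sN_*^{k-1})=1\}$: the argument needs $X_{k-1}(t)$ finite a.s.\ at each $t\in D_k$, which your choice $D_k=\sQ$ would require a separate verification to deliver. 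Supplying the right-continuity-on-$D_k$ step (and either adopting the paper's $D_k$ or arguing that $\sQ$ avoids the image of $\theta_j^{k-1}$ a.s.\ for each fixed rational) would close the gap.
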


\begin{proof}
  Let us fix $\underline\gamma$ and
  prove the result for almost all such choices.
  We also will denote by $\GG_k$ the $\sigma$-algebra generated by all
  Poisson processes and exponential random variables up to level $k$.

  Looking at definition \eqref{eq:new-clock}, we can split the ranges
  in which $\widetilde{\clock}_{k+1}$ uses the random environment from
  each of $x|_k$ of the previous levels, obtaining:
  \begin{align*}
    \sE_\gamma \left[ \widetilde{\clock}_{k+1}(t) \middle| \GG_k \right]
    &=
    \sum_{x|_k} L_k(x|_k, t) \sum_{x_{k+1}} W(x|_{k+1})
    \gamma_{k+1}(x|_{k+1})\\
    &=
    \sum_{x|_k} L_k(x|_k, t) W(x|_{k}).
  \end{align*}

  We can rewrite the definition in \eqref{eq:new-clock} to obtain:
  \begin{align*}
    \widetilde{\clock}_n(t) &= \sum_{x_n \in \sN_*} \sum_{i=1}^{N^{j,x_n}(t)}
    W(X_{n-1}(\sigma_i^{n,x_n}) x_n)
    \gamma_n(X_{n-1}(\sigma_i^{n,x_n}) x_n) T_i^{n, x_n}\\
    &=  \sum_{x|_n \in \sN_*^n} \sum_{i=1}^{N^{j,x_n}(t)}
    W(x|_n) \gamma_n(x|_n) \ind\{ X_{n-1}(\sigma_i^{n, x_n}) =
    x|_{n-1} \} T_i^{n, x_n}\\
    &=  \sum_{x|_n \in \sN_*^n} W(x|_n) L_n(x|_n, \clock_n(t))
  \end{align*}

  Finally taking $n > k$:
  \begin{align*}
    \sE_\gamma \left[ \widetilde{\theta}_k^{n+1}(t) \middle| \GG_n \right]
    &=
    \sE_\gamma \left[ \widetilde{\clock}_{n+1} (\theta_k^n(t)) \middle| \GG_n
    \right]\\
    &= \sum_{x|_n} L_n(x|_n, \theta_k^n(t)) W(x|_{n})\\
    &= \widetilde{\theta}_k^n(t)
  \end{align*}

  We have thus shown that, under $\sP_\gamma$, $(\widetilde{\theta}_k^n(t))_{n > k}$ 
  is a martingale with respect to the filtration $(\GG_n)_n$. Since it is
  nonnegative, we can again use a martingale convergence
  theorem (see eg. \cite{kn:d}, Theorem 5.2.9) to conclude that
  $\lim_{n \to \infty} \widetilde\theta_k^n(t)$ exists and is finite
  \emph{a.s.}~for each fixed $t \geq 0$. It also (along with Remark
  \ref{obs:pedacos-clock}) gives us \eqref{eq:expected-limit-clock}.

  Now to define the sets $D_k$ on the statement of the result, for $k
  = 1$ take $D_k$ the rational numbers of $[0, \infty)$. For $k > 1$,
  take a countable dense subset of $\{t \geq 0: \sP(X_{k-1}(t) \in
  \sN_*^{k-1}) = 1 \}$. This set is dense in $[0, \infty)$, since it
  has total Lebesgue measure (Remark
  \ref{obs:lebesgue-nula-integrado}).

  Let us take these limits $\theta_k^\infty(t) := \lim_{n \to \infty}
  \widetilde{\theta}_k^n(t)$ for $t \in D_k$. Since each
  $\widetilde{\theta}_k^n$ is monotonic, then the limit will be
  monotonic as well.

  With this we can define $\theta_k^\infty(t) = \lim_{s \to t+}
  \theta_k^\infty(s)$ for any $t \not\in D_k$, this limit being taken
  over $s \in D_k$.

  To complete the proof we only need to show that $\theta_k^\infty$ is
  right continuous over $D_k$. For a $t \in D_k$, let
  $\theta_k^\infty(t+) = \lim_{s \to t+} \theta_k^\infty(s)$, this
  limit exists almost surely because of monotonicity.

  We can compute the Laplace transform of $\theta_1^\infty(t+) -
  \theta_1^\infty(t)$ using Proposition \ref{prop:laplace-clock} and
  the fact that $\theta_1^n$ has stationary increments:
  \begin{align*}
    \sE & \left[
      \exp\left\{
        -\lambda \left(
          \theta_1^\infty(t+) - \theta_1^\infty(t)
        \right)
      \right\}
    \right] \\
    &= \lim_{s \to 0+} \lim_{n \to \infty}
    \sE \left[ \exp \left\{
        -s \sum_{x_1} h_{x|_1} \left(
          \sum_{x_2} h_{x|_2} \left(
            \cdots
            \sum_{x_n} h_{x|_n} \left(\lambda W(x|_n)\right)
            \cdots
          \right)
        \right)
      \right\} \right]\\
    &= \lim_{s \to 0+}
    \sE \left[ \exp \left\{
        -s \phi(\lambda)
      \right\} \right] = 1.
  \end{align*}

  The exchanges between limit and expected values taken here can be
  justified either by the continuity theorem for Laplace transforms or
  the dominated convergence theorem. Moreover this random function
  $\phi(\lambda)$ is finite \emph{a.s.} because $\theta_1^n(t)$ is
  finite \emph{a.s.}.

  So we have proved that $\theta_1^\infty(t+) = \theta_1^\infty(t)$
  \emph{a.s.} for every rational $t$. This concludes the proof for the
  case $k = 1$.

  For the case $k > 1$, note that our choice for $D_k$ guarantees
  that, with probability one, $X_{k-1}(t) \in \sN_*^{k-1}$ for all $t
  \in D_k$.

  Therefore $t$ belongs to an interval $[a, b)$ such that $X_{k-1}(s)
  = x|_{k-1}$ for all $s \in [a, b)$. So $L_{k-1}(y|_{k-1}, s)$ is
  constant in this interval for every $y|_{k-1} \neq x|_{k-1}$. Using
  Remark \ref{obs:pedacos-clock} e the previous case, we can conclude:
  \begin{align*}
    \theta_k^\infty(t+) - \theta_k^\infty(t)
    &= \lim_{s \to 0+} \lim_{n \to \infty}
    \theta_k^n(t+s) - \theta_k^n(t)\\
    &= \lim_{s \to 0+} \lim_{n \to \infty} \sum_{y|_{k-1}} \left[
    \theta_{y|_{k-1}}^n(L_{k-1}(y|_{k-1}, t+s)) -
    \theta_{y|_{k-1}}^n(L_{k-1}(y|_{k-1}, t))
    \right]\\
    &= \lim_{s \to 0+} \lim_{n \to \infty}
    \theta_{x|_{k-1}}^n(L_{k-1}(x|_{k-1}, t+s)) -
    \theta_{x|_{k-1}}^n(L_{k-1}(x|_{k-1}, t))\\
    &= \lim_{s \to 0+}
    \theta_{x|_{k-1}}^\infty(L_{k-1}(x|_{k-1},t+s)) -
    \theta_{x|_{k-1}}^\infty(L_{k-1}(x|_{k-1}, t))
    = 0.
    \qedhere
  \end{align*}
\end{proof}

\begin{teorema}[Non Triviality]
  \label{thm:nao-trivial}
  Suppose that:
  \begin{equation}
    \label{eq:nao-trivial}
    \sum_{k=1}^{\infty} \frac{1-\alpha_{k+1}}{1-\alpha_k} < \infty.
  \end{equation}
  Then, for every $k \in \sN_*$, $\theta_k^\infty$ is \emph{a.s.} a
  strictly increasing function and $\lim_{t \to \infty}
  \theta_k^\infty(t) = \infty$.
\end{teorema}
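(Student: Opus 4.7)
The plan is to reduce the theorem, in the case $k=1$, to a statement about the conditional Laplace exponent of $\theta_1^\infty$, and then bootstrap to general $k \geq 2$ using the decomposition of Remark \ref{obs:pedacos-clock}. Conditionally on the environment $\underline\gamma$, each approximant $\widetilde\theta_1^n$ is a subordinator, being a composition of conditional subordinators, so its a.s.\ limit $\theta_1^\infty$ is likewise a $\sP_\gamma$-subordinator, with Laplace exponent $\Phi(\lambda;\underline\gamma) := \lim_n \tilde\psi_n(\lambda;\underline\gamma)$, where $\tilde\psi_n$ is the iterated functional from Proposition \ref{prop:laplace-clock}. Standard facts about subordinators then reduce the case $k=1$ to showing, $\underline\gamma$-a.s., that (i) $\Phi(\lambda;\underline\gamma) > 0$ for some $\lambda > 0$ (equivalent to $\theta_1^\infty(t) \to \infty$ as $t \to \infty$), and (ii) $\Phi(\lambda;\underline\gamma) \to \infty$ as $\lambda \to \infty$ (equivalent to strict monotonicity, since this excludes a bounded Laplace exponent, i.e.\ a compound Poisson structure with zero drift).

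For (i), I would first upgrade the bound $\sE_\gamma[\theta_1^\infty(t)] \leq t\,W(\emptyset)$ from Theorem \ref{thm:limit-clock} to equality by establishing uniform integrability of the martingale $(\widetilde\theta_1^n(t))_n$ under $\sP_\gamma$, presumably via an $L^p$-bound with $p > 1$. Combined with $W(\emptyset) > 0$ a.s., this forces $\theta_1^\infty(t) > 0$ on a set of positive $\sP_\gamma$-probability, and the subordinator structure then yields $\Phi > 0$ everywhere on $(0,\infty)$; divergence at infinity follows immediately, e.g.\ from the strong law for subordinators applied to $\theta_1^\infty$ with mean rate $W(\emptyset) > 0$. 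For (ii), the key is that each individual clock $\clock_k$ already has an infinite Lévy measure (since $c_k/t^{1+\alpha_k}$ is non-integrable at $0$), so the issue is whether enough small-jump mass survives the infinite composition. I would control the iteration of $\tilde\psi_n$ using the stable-scaling identities from Section \ref{sec:preliminaries}: at each level $k$, the functional $\mu \mapsto \sum_{x_{k+1}} h_{x|_{k+1}}(\mu)$ grows like $\mu^{\alpha_{k+1}}$ times a stable-type random factor, and composing these should give, under (\ref{eq:nao-trivial}), a quantitative lower bound of the form $\Phi(\lambda;\underline\gamma) \gtrsim c(\underline\gamma)\,\lambda^{\beta}$ with some $\beta > 0$ and $c(\underline\gamma)>0$.

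For $k \geq 2$, I would apply the decomposition $\theta_k^\infty(t) = \sum_{x|_{k-1}} \theta_{x|_{k-1}}^\infty(L_{k-1}(x|_{k-1},t))$ inherited from Remark \ref{obs:pedacos-clock} after passing to the limit as $n \to \infty$. By the same remark, each $\theta_{x|_{k-1}}^n$ is distributed as $\theta_1^{n-k+1}$ with the shifted parameter sequence $(\alpha_{k-1+i})_{i\geq 1}$, which still satisfies (\ref{eq:nao-trivial}); hence the case $k=1$ applied to these shifted parameters gives a.s.\ strict monotonicity and divergence for each $\theta_{x|_{k-1}}^\infty$. Strict monotonicity of $\theta_k^\infty$ on an interval $(s,t)$ follows from the existence, for a.e.\ path of $X_{k-1}$ (by Remark \ref{obs:infinitos-niveis}), of some $x|_{k-1}$ with $L_{k-1}(x|_{k-1},t) > L_{k-1}(x|_{k-1},s)$, combined with strict monotonicity of $\theta_{x|_{k-1}}^\infty$. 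Divergence follows because $\sum_{x|_{k-1}} L_{k-1}(x|_{k-1},t) = t \to \infty$ forces $L_{k-1}(x|_{k-1},t) \to \infty$ for at least one visited $x|_{k-1}$, and then of the corresponding $\theta_{x|_{k-1}}^\infty(L_{k-1}(x|_{k-1},t))$.

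The main obstacle I anticipate is the quantitative control in step (ii): showing rigorously that (\ref{eq:nao-trivial}) is exactly the condition under which the cumulative multiplicative constants accumulated along the infinite iteration of $\tilde\psi_n$ remain positive rather than collapsing to zero, so that $\Phi$ is genuinely unbounded. This will require careful bookkeeping through the stable-scaling identities and the martingale relation of Proposition \ref{prop:def-W}, which encodes precisely how the $W$ factors compensate the mismatch between levels.
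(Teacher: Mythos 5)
Your high-level reduction is aligned with the paper: conditionally on $\underline\gamma$ the approximating clocks are subordinators, and both strict monotonicity and divergence reduce to showing that the conditional Laplace exponent (which is precisely $Z_\emptyset^n(\lambda)$ in the notation of \eqref{eq:exp-laplace-clock}) stays bounded away from zero and eventually tends to $+\infty$ as $\lambda\to\infty$. Your treatment of $k\geq 2$ via the Remark \ref{obs:pedacos-clock} decomposition with shifted parameters is also essentially the paper's approach.

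However, your proposed route for step (i) contains a genuine gap. You plan to upgrade the inequality $\sE_\gamma[\theta_1^\infty(t)]\leq tW(\emptyset)$ to an equality by establishing uniform integrability of the martingale $(\widetilde\theta_1^n(t))_n$ under $\sP_\gamma$, ``presumably via an $L^p$-bound with $p>1$.'' This is exactly the statement the authors explicitly leave open at the end of Section~\ref{sec:invar-meas-some}, where they remark that they do \emph{not} know whether $\theta_\cdot^n(1)\to\theta_\cdot^\infty(1)$ in $L_1$, i.e.\ whether the inequality in \eqref{eq:expected-limit-clock} saturates. Since the $Z$-variables are built from stable-type quantities with heavy tails, $L^p$-bounds with $p>1$ under the full measure $\sP$ cannot hold (stable laws of index $\alpha_{k+1}<1$ have no first moment), and even under $\sP_\gamma$ it is not clear how to make this work; indeed, mass escaping in the martingale limit is precisely what \emph{does} happen in the triviality regime of Theorem~\ref{thm:trivial}, where $\sE_\gamma[\widetilde\theta_1^n(t)]=tW(\emptyset)>0$ but $\theta_1^\infty\equiv 0$. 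So UI is not a side-fact one can expect to get for free; it is at least as hard as, and likely strictly harder than, the theorem itself. The paper sidesteps this entirely: it works with the fractional moments $a_k^n:=\sE[(Z_{x|_k}^n)^{\alpha_k}]$ (note the $\alpha_k$-th power, not the first moment), obtains a recursive lower bound via Lemma~\ref{lema:trivial-inferior} together with Proposition~\ref{prop:ppp-invariance} and the stable-moment computation of Proposition~\ref{prop:stable-moments}, and then invokes Lemma~\ref{lema:di-somavel} to show $\liminf_n a_1^n>0$ exactly when \eqref{eq:nao-trivial} holds. No integrability beyond the $\alpha$-moment is needed, and the nontriviality condition enters through $\prod_i d_i>0$ rather than through a UI/$L_1$ argument.

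Your sketch for step (ii) is closer in spirit to the paper's argument, but still needs substance: you aim for a power-law lower bound $\Phi(\lambda;\underline\gamma)\gtrsim c(\underline\gamma)\lambda^\beta$, whereas the paper obtains the weaker but sufficient statement $\lim_{\lambda\to\infty}\liminf_n Z_\emptyset^n(\lambda)=\infty$ in probability. It does so by first getting $\lim_\lambda\liminf_n a_k^n(\lambda)=\infty$ at a high enough level $k$ (one with $\sum_{j\geq k}(1-\alpha_j)<1$, which controls the $(1+\delta)$-type denominators in the iterated bound \eqref{eq:nao-trivial-2}), and then propagating this down to level $0$ using that for any fixed finite $M$, the truncated sum $\sum_{x_j\leq M}h_{x|_j}(Z_{x|_j}^n)\to M$ as $n,\lambda\to\infty$; since $M$ is arbitrary, $Z_{x|_{j-1}}^n\to\infty$. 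You correctly identify this as the crux, but the piece you defer is exactly where the work is, and the power-law guess is not what the argument actually produces.

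In short: the Laplace-exponent framing is right, the $k\geq 2$ bootstrap is right, but your step (i) rests on an unresolved open question of the paper, and step (ii) is not worked out. The key technical tools you are missing are the fractional-moment recursion driven by Lemma~\ref{lema:trivial-inferior} and the purely analytic Lemma~\ref{lema:di-somavel} translating \eqref{eq:nao-trivial} into summability of $1-d_i$.
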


\begin{teorema}[Triviality]
  \label{thm:trivial}
  Suppose that:
  \begin{equation}
    \label{eq:trivial}
    \sum_{k=1}^{\infty} (1-\alpha_k) < \infty, \quad
    \sum_{k=1}^{\infty} \frac{1-\alpha_{k+1}}{1-\alpha_k} = \infty,
  \end{equation}
  then $\theta_k^\infty(t) = 0$ \emph{a.s.} for every $t \geq 0$ and
  $k \in \sN_*$.
\end{teorema}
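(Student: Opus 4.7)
The approach is via the Laplace transform of $\theta_1^\infty(t)$: by Theorem~\ref{thm:limit-clock} and bounded convergence, $\sE[e^{-\lambda\theta_1^\infty(t)}]=\lim_n\sE[e^{-\lambda\widetilde\theta_1^n(t)}]$, so the goal is to show this limit equals $1$ for every $\lambda,t\geq 0$. (The case $k>1$ follows analogously via the independence structure in Remark~\ref{obs:pedacos-clock}, since the triviality hypothesis is preserved for the shifted-index subtrees $\theta_{x|_{k-1}}^n$.) By Proposition~\ref{prop:laplace-clock} it suffices to show $U_0^n(\lambda)\to 0$ in probability, where $U_0^n(\lambda)=\sum_{x_1}h_{x|_1}(U_1^n(x|_1,\lambda))$ is the nested iteration with boundary $U_n^n(x|_n,\lambda)=\lambda W(x|_n)$.

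Exploiting that $\{U_{k+1}^n(x|_{k+1},\lambda)\}_{x_{k+1}}$ are i.i.d.\ across siblings and independent of the PPP of intensities $\gamma_{k+1}(x|_{k+1})$ (of intensity measure $\mu_{k+1}$), Campbell's theorem applied to a marked PPP yields the closed form
\begin{equation*}
\sE\bigl[e^{-s U_k^n(\lambda)}\bigr]=\exp\bigl(-c_{k+1}\,F_{k+1}(s)\,M_{k+1}^n(\lambda)\bigr),\quad F_{k+1}(s):=\int_0^\infty\!(1-e^{-su/(1+u)})\,u^{-1-\alpha_{k+1}}\,du,
\end{equation*}
with $M_{k+1}^n(\lambda):=\sE[(U_{k+1}^n(\lambda))^{\alpha_{k+1}}]$. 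Thus $U_0^n(\lambda)\to 0$ in probability iff $M_1^n(\lambda)\to 0$. All $M_k^n$ are finite by descending induction: at the top, $M_n^n(\lambda)=\lambda^{\alpha_n}\,\Gamma(1-\alpha_n/\alpha_{n+1})/\Gamma(1-\alpha_n)<\infty$ by the $\alpha_{n+1}$-stability of $W(x|_n)$ (Proposition~\ref{prop:def-W}) combined with Proposition~\ref{prop:stable-moments} and $\alpha_n<\alpha_{n+1}$; the inductive step then uses the stable-like tail of $U_k^n$ implied by the above Laplace transform. Pairing this Laplace identity with the fractional moment formula (as in the proof of Proposition~\ref{prop:stable-moments}) produces an exact integral recursion relating $M_k^n$ to $M_{k+1}^n$.

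The final step is to analyze this recursion under the triviality hypothesis. The function $F_{k+1}$ interpolates between $s\,\pi/\sin(\pi\alpha_{k+1})$ as $s\to 0$ and $s^{\alpha_{k+1}}/(\alpha_{k+1}(1-\alpha_{k+1}))$ as $s\to\infty$, and with the choice~(\ref{eq:constant-choice}) of $c_{k+1}$ the product $c_{k+1}F_{k+1}(s)$ behaves like $(1-r_{k+1})\,s^{\alpha_{k+1}}$ at large $s$, where $r_k:=(1-\alpha_{k+1})/(1-\alpha_k)$. The condition $\sum_k(1-\alpha_k)<\infty$ keeps the products $\prod\alpha_k$ bounded away from $0$, which renders the recursion and its iterated Gamma-ratio coefficients well-defined in the limit; the condition $\sum_k r_k=\infty$ is then what forces the telescoping expression for $\log M_1^n$ produced by iterating the recursion to diverge to $-\infty$, yielding $M_1^n(\lambda)\to 0$. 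The main obstacle lies precisely in this matching step: the naive ``pure-stable composition'' approximation, which replaces $F_{k+1}(s)$ by its large-$s$ power-law asymptotic throughout, does not distinguish the trivial from the nontrivial regime (it gives a bounded $M_1^n$ in both). One must therefore track both the leading and subleading behaviors of $F_{k+1}$ in the relevant range of $s$ (which is itself determined at each level by the evolving magnitude of $c_{k+1}M_{k+1}^n$), and show that the cumulative correction is dominated by a telescoping sum of $\log(1-r_k)$ terms whose divergence to $-\infty$ is equivalent to $\sum r_k=\infty$.
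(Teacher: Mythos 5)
Your reduction is essentially the paper's: via Proposition~\ref{prop:laplace-clock} the problem becomes showing that the nested quantity $Z^n_\emptyset$ (your $U^n_0$) vanishes, with the same boundary moment $\lambda^{\alpha_n}\Gamma(1-\alpha_n/\alpha_{n+1})/\Gamma(1-\alpha_n)$, and your exact identity $\sE[e^{-sU^n_k}]=\exp\{-c_{k+1}F_{k+1}(s)M^n_{k+1}\}$ (Proposition~\ref{prop:ppp-invariance} plus Campbell, given finiteness of the relevant fractional moments) is correct and even sharper bookkeeping than the paper's. The problem is that the argument stops exactly where the theorem lives. The entire content of the statement is that the recursion for $M^n_k$ is driven to zero precisely when $\sum_k(1-\alpha_{k+1})/(1-\alpha_k)=\infty$, and for that step you offer only a program: ``one must track both the leading and subleading behaviors of $F_{k+1}$ in the relevant range of $s$ \dots and show that the cumulative correction is dominated by a telescoping sum of $\log(1-r_k)$ terms.'' Nothing is proved there: no quantitative bound on the exact fractional-moment recursion is derived, the localization of the defining integral in $s$ (which you yourself note depends on the evolving size of $c_{k+1}M^n_{k+1}$) is not justified, the asymptotics are not shown to be uniform in $k$, and the claimed equivalence of the accumulated correction with $\sum_k r_k=\infty$ is asserted rather than established. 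Your diagnosis that the pure-stable approximation cannot see the dichotomy is correct (indeed Proposition~\ref{prop:def-W} shows the pure-stable composition stays nondegenerate for any $\alpha_k\uparrow 1$), but diagnosing the obstacle is not the same as overcoming it; as written this is a plan, and the gap is genuine.

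For comparison, the paper closes this step with far less machinery, in a way that defuses your ``main obstacle.'' It works with first moments $a^n_k:=\sE[Z^n_{x|_k}]$ for $k<n$: Proposition~\ref{prop:ppp-invariance} gives the exact identity $a^n_{k-1}=c_k\,\Gamma(\alpha_k)\Gamma(1-\alpha_k)\,\sE[(Z^n_{x|_k})^{\alpha_k}]$ — the Beta integral $\int_0^\infty \frac{x}{1+x}\,x^{-1-\alpha_k}dx$ retains exactly the deviation of $h(u)=u/(1+u)$ from $u$ that you identify as essential — and Jensen then yields $a^n_{k-1}\le d_k\,(a^n_k)^{\alpha_k}$ with $d_k=\Gamma(1+\alpha_k)\Gamma(1-\alpha_k)/\Gamma(1-\alpha_k/\alpha_{k+1})<1$. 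Since $\sum_k(1-\alpha_k)<\infty$ forces $\prod_k\alpha_k>0$, iterating gives $a^n_0\le C\prod_{i<n} d_i^{\alpha_1\cdots\alpha_{i-1}}$, so triviality reduces to $\sum_i(1-d_i)=\infty$, and the only delicate analysis is the deterministic Gamma-function comparison of $1-d_i$ with $(1-\alpha_{i+1})/(1-\alpha_i)$, done once and for all in Lemma~\ref{lema:di-somavel}. The key simplification you are missing is that for the triviality direction an upper bound suffices, so Jensen plus the exact first-moment identity already provides all the ``tracking of subleading behavior'' that is needed; if you insist on the exact Laplace recursion, you would still have to prove an analogue of Lemma~\ref{lema:di-somavel} together with honest, uniform-in-$k$ control of where the fractional-moment integral concentrates.
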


\begin{observacao}
  Note that the case $\sum_k (1-\alpha_k) = \infty$ is not covered by
  either Theorem \ref{thm:nao-trivial} or \ref{thm:trivial}.
  We believe that the limit clocks are also trivial in this case
  (based on a few instances where we can see computations through,
  and on the intuition developed on these analyses; the general case
  seems too hard to compensate the expected dead end result).

  We will refer to condition \eqref{eq:nao-trivial} as the
  non-triviality condition. Note that it implies that $\sum_i
  (1-\alpha_i) < \infty$.
\end{observacao}

Both statements will be proven by studying the behavior of the random
variables in the exponent of the right hand side of
\eqref{eq:laplace-clock}. To make notations more compact, let us
define, for $n \geq k$ and a fixed $\lambda \geq 0$:
\begin{equation}
  \label{eq:exp-laplace-clock}
  Z_{x|_k}^n := \begin{cases}
    \lambda W(x|_k), & \text{ if } k = n;\\
    \sum_{x_{k+1}} h_{x|_{k+1}} ( Z_{x|_{k+1}}^n ), & \text{ otherwise.}
  \end{cases}
\end{equation}

Let us prove an auxiliary result that will be useful in the proof of
both theorems:
\begin{lema}
  \label{lema:di-somavel}
  Suppose that $\sum_i{1-\alpha_i} < \infty$. Then $\sum_i
  \frac{1-\alpha_{i+1}}{1-\alpha_i} < \infty$ if and only if
  $\sum_i (1-d_i) < \infty$, where:
  \begin{displaymath}
    d_i := \frac{\alpha_i \Gamma(\alpha_i)
      \Gamma(1-\alpha_i)}{\Gamma(1-\alpha_i/\alpha_{i+1})}.
  \end{displaymath}
\end{lema}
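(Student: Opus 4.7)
The plan is to reduce the statement to a direct term-by-term asymptotic comparison of $1-d_i$ with $r_i := (1-\alpha_{i+1})/(1-\alpha_i)$, via Taylor expansion of the Gamma function at its benign arguments. Set $\beta_i := 1 - \alpha_i$ and $\delta_i := 1 - \alpha_i/\alpha_{i+1}$; the standing hypothesis $\sum_i \beta_i < \infty$ forces $\beta_i, \delta_i \to 0$. The algebraic identity $\beta_i - \beta_{i+1} = \alpha_{i+1}\delta_i$ yields both $\delta_i/\beta_i = (1-r_i)/\alpha_{i+1}$ and $\delta_i \le \beta_i/\alpha_{i+1}$, so the summability of $(\beta_i)$ automatically gives summability of $(\delta_i)$.

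First I would absorb the singularities at $0$ by using $\Gamma(x) = \Gamma(1+x)/x$ to rewrite
\[
d_i \;=\; F_i\,\frac{\delta_i}{\beta_i}, \qquad F_i := \frac{\Gamma(1+\alpha_i)\,\Gamma(1+\beta_i)}{\Gamma(1+\delta_i)}.
\]
Since $\Gamma$ is smooth near $1$ and $2$, a first-order Taylor expansion using $\Gamma'(1) = -\gamma_E$ and $\Gamma'(2) = 1-\gamma_E$ (where $\gamma_E$ denotes Euler's constant, to distinguish from the random parameters $\gamma_k(x|_k)$) gives
\[
F_i \;=\; 1 - \beta_i + \gamma_E\,\delta_i + O(\beta_i^2 + \delta_i^2).
\]
Substituting this into $\alpha_{i+1}(1-d_i) = \alpha_{i+1} - F_i + F_i r_i$ and using $\alpha_{i+1} - F_i = \alpha_{i+1}\delta_i - \gamma_E\delta_i + O(\beta_i^2 + \delta_i^2)$ produces
\[
1 - d_i \;=\; \frac{\alpha_{i+1} - \gamma_E}{\alpha_{i+1}}\,\delta_i \;+\; \frac{F_i}{\alpha_{i+1}}\,r_i \;+\; O(\beta_i^2 + \delta_i^2).
\]

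As $i \to \infty$, the coefficient of $\delta_i$ tends to $1 - \gamma_E > 0$ and the coefficient of $r_i$ tends to $1$, so both are positive and bounded away from $0$ and $\infty$ for large $i$. Since $\sum_i \delta_i < \infty$ by the first paragraph and $\sum_i (\beta_i^2 + \delta_i^2) < \infty$ (both sequences being bounded and summable), the series $\sum_i (1-d_i)$ and $\sum_i r_i$ converge or diverge together, which is exactly the claimed equivalence. The only point to watch is the Taylor bookkeeping: the leading $O(\beta_i)$ contributions inside $F_i$ and inside $\alpha_{i+1} = 1 - \beta_{i+1}$ must cancel in $\alpha_{i+1} - F_i$ so that $\delta_i$ emerges as the true leading order, and it is this cancellation — harmless but easy to mishandle — that makes the comparison with $r_i$ clean.
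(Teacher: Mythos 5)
Your proof is correct, and the logical skeleton is the same as the paper's: both arguments boil down to showing that $1 - d_i = c_i\,r_i + s_i$ with $c_i \to 1$ and $(s_i)$ absolutely summable, using $\Gamma(1+x) = x\Gamma(x)$ to push the Gamma evaluations away from the pole and the identity $\beta_i - \beta_{i+1} = \alpha_{i+1}\delta_i$ (implicitly in the paper, explicitly in yours) to tie $\delta_i$ to $r_i$. The execution differs in flavor. You normalize $d_i = F_i\,\delta_i/\beta_i$ with $F_i = \Gamma(1+\alpha_i)\Gamma(1+\beta_i)/\Gamma(1+\delta_i)$ and do a genuine first--order Taylor expansion of $F_i$ near the pair of arguments $1,2$, tracking the digamma constants $\Gamma'(1)=-\gamma_E$, $\Gamma'(2)=1-\gamma_E$ and carrying explicit $O(\beta_i^2+\delta_i^2)$ errors. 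The paper instead splits the exact quantity $\Gamma(2-\alpha_i/\alpha_{i+1})(1-d_i)/r_i$ into three algebraic pieces without ever naming derivatives: one vanishing difference (handled by the mean value theorem), one piece converging to $1$, and one residual whose product with $r_i$ is summable. The paper's decomposition avoids the book-keeping of $O$-terms but is more ad hoc; your Taylor route is more mechanical and makes the cancellation $\alpha_{i+1} - F_i = O(\delta_i)$ visible, at the cost of having to justify uniform constants in the expansions (which is fine here since $\Gamma$ is $C^2$ on a neighbourhood of $\{1,2\}$). One small remark: you do not actually need the coefficient $(\alpha_{i+1}-\gamma_E)/\alpha_{i+1}$ of $\delta_i$ to be positive; since $\sum_i \delta_i < \infty$, boundedness of that coefficient already suffices. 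The decisive fact is that the coefficient $F_i/\alpha_{i+1}$ of $r_i$ is eventually bounded away from $0$ and $\infty$, which you correctly establish.
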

\begin{proof}
  Using that $\Gamma(1+\alpha) = \alpha \Gamma(\alpha)$ and developing
  the definition of $d_i$, we can write:
  \begin{align}
    \notag
    \Gamma &\left(2-\frac{\alpha_i}{\alpha_{i+1}} \right)
    \frac{(1 - d_i)(1-\alpha_i)}{1-\alpha_{i+1}}\\
    \label{eq:di-somavel-1}
    &=
    \frac{1-\alpha_i}{1-\alpha_{i+1}} \left[
      \Gamma\left(2 - \frac{\alpha_i}{\alpha_{i+1}} \right) -
      \Gamma(2-\alpha_i)
    \right]\\
    \label{eq:di-somavel-2}
    &+
    \frac{\Gamma(2-\alpha_i)}{1-\alpha_{i+1}} \left[
      1-\alpha_i - \left(1 - \frac{\alpha_i}{\alpha_{i+1}}\right)
    \right]\\
    \label{eq:di-somavel-3}
    &+
    \frac{\Gamma(2-\alpha_i)}{1-\alpha_{i+1}}
    \left( 1 - \frac{\alpha_i}{\alpha_{i+1}} \right)
    \left[
      1 - \Gamma(1+\alpha_i)
    \right].
  \end{align}

  Note that the three terms are positive for large enough $i$, since
  $\Gamma$ is decreasing near $1$ and increasing near $2$ and $0 <
  \alpha_i < \alpha_i/\alpha_{i+1} < 1$.

  Since $\Gamma$ is differentiable in $[1, 2]$, we have that
  \eqref{eq:di-somavel-1} converges to zero as $i \to \infty$. A
  straightforward computation shows that \eqref{eq:di-somavel-2}
  converges to $1$ as $i \to \infty$.

  Let $a_i := \eqref{eq:di-somavel-1} + \eqref{eq:di-somavel-2}$ and $b_i
  := \eqref{eq:di-somavel-3}$, we can write:
  \begin{align*}
    b_i \frac{1-\alpha_{i+1}}{1-\alpha_i} =
    \frac{\alpha_{i+1} - \alpha_i}{\alpha_{i+1}}
    \frac{\Gamma(2) - \Gamma(1+\alpha_i)}{1-\alpha_i}.
  \end{align*}
  Therefore $b_i^\prime := b_i \frac{1-\alpha_{i+1}}{1-\alpha_i}$ is summable, since
  $\sum_i (1-\alpha_i) < \infty$ and
  $\frac{\Gamma(2)-\Gamma(1+\alpha_i)}{1-\alpha_i}$ converges to a
  constant as $i \to \infty$.

  Finally, we can write:
  \begin{align*}
     (1-d_i)\Gamma\left( 2 - \frac{\alpha_i}{\alpha_{i+1}} \right)
     &= a_i \frac{1-\alpha_{i+1}}{1-\alpha_i}
     + b_i^\prime.
  \end{align*}

  Since $a_i \to 1$ and $\Gamma(2-\alpha_i/\alpha_{i+1}) \to 1$ as $i
  \to \infty$ and $b_i^\prime$ is summable, we conclude that
  $\sum_i (1-d_i) < \infty$ if and only if $\sum_i
  \frac{1-\alpha_{i+1}}{1-\alpha_i} < \infty$.
\end{proof}

\begin{proof}[Proof of Theorem \ref{thm:trivial}]
  We will only prove that $\theta_1^\infty \equiv 0$. It is
  straightforward to extend this case to the general case.

  Because of Proposition \ref{prop:laplace-clock} and
  \eqref{eq:exp-laplace-clock}, it is enough to prove that
  $Z_{\emptyset}^n$ converges to $0$ in probability as $n$ goes to
  infinity. We will show the stronger result that
  $\sE(Z_{\emptyset}^n) \to 0$ as $n \to \infty$.

  Let us define:
  \begin{displaymath}
    a_k^n = \begin{cases}
      \sE(Z_{x|_k}^n) & \text{ if } k < n,\\
      \sE\left(\left(Z_{x|_k}^n\right)^{\alpha_k}\right) & \text{ if } k = n.
    \end{cases}
  \end{displaymath}

  Using the last claim in Proposition \ref{prop:def-W}, together with
  Proposition \ref{prop:stable-moments}, we can compute $a_n^n =
  \lambda^{\alpha_n} \Gamma(1-\alpha_{n}/\alpha_{n+1}) /
  \Gamma(1-\alpha_n)$. Using Proposition \ref{prop:ppp-invariance}, together
  with Campbell's Theorem and Jensen's Inequality, we can write that
  for $k < n$:
  \begin{align}
    \notag
    a_{k-1}^n &= \sE \left[
      \sum_{x_k} \frac{\gamma_k(x|_k) Z_{x|_k}^n}{1+\gamma_k(x|_k) Z_{x|_k}^n}
    \right]\\
    \notag
    &= \int_0^\infty \frac{x}{1+x}
    \frac{\sE\left[ (Z_{x|_k}^n)^{\alpha_k}\right] c_k}
    {x^{1+\alpha_k}} d x\\
    \notag
    &\leq c_k (a_k^n)^{\alpha_k}
    \int_0^\infty \frac{1}{x^{\alpha_k} (1+x)} d x \\
    \notag
    &= c_k (a_k^n)^{\alpha_k}
    \int_0^1 y^{1-\alpha_k -1} (1-y)^{\alpha_k-1} dy\\
    \notag
    &= c_k (a_k^n)^{\alpha_k} \Gamma(1-\alpha_k) \Gamma(\alpha_k)\\
    \label{eq:trivial-1}
    &= (a_k^n)^{\alpha_k} \frac{\alpha_k \Gamma(\alpha_k) \Gamma(1-\alpha_k)}{\Gamma\left(1 - \frac{\alpha_k}{\alpha_{k+1}} \right)}.
  \end{align}

  In the last line we substituted the value of $c_k$ in
  \eqref{eq:constant-choice}. We can compute $a_{n-1}^n$ in an
  analogous way, but using the actual value of
  $\sE[(Z_{x|_n}^n)^{\alpha_n}]$ instead of estimating it via Jensen's
  inequality, obtaining:
  \begin{align}
    \label{eq:trivial-1-2}
    a_{n-1}^n &= a_n^n \frac{\alpha_n \Gamma(\alpha_n)
      \Gamma(1-\alpha_n)}{\Gamma\left(1 -
        \frac{\alpha_n}{\alpha_{n+1}} \right)}
    = \lambda^{\alpha_n} \alpha_n \Gamma(\alpha_n)
  \end{align}

  Iterating on \eqref{eq:trivial-1} and using this equality, we obtain
  that:
  \begin{align}
    \notag
    a_0^n
    &\leq \prod_{i=1}^{n-1} \left[ \frac{\alpha_i
        \Gamma(\alpha_i)
        \Gamma(1-\alpha_i)}{\Gamma\left(1-\frac{\alpha_i}{\alpha_{i+1}}\right)}
    \right]^{\alpha_1 \ldots \alpha_{i-1}}
    (a_{n-1}^n)^{\alpha_1 \ldots \alpha_{n-1}}\\
    \label{eq:trivial-2}
    &= (\lambda^{\alpha_n} \alpha_n \Gamma(\alpha_n))^{\alpha_1 \ldots \alpha_{n-1}}
    \prod_{i=1}^{n-1} \left[
      \frac{\alpha_i \Gamma(\alpha_i)
        \Gamma(1-\alpha_i)}{\Gamma\left(1-\frac{\alpha_i}{\alpha_{i+1}}\right)}
    \right]^{\alpha_1 \ldots \alpha_{i-1}}
  \end{align}

  Note that $\lambda^{\prod_j \alpha_j} \leq \max\{\lambda, 1\}$ and
  that $\alpha_n \Gamma(\alpha_n) \to 1$ as $n \to \infty$. So if we
  show that the product on \eqref{eq:trivial-2} converges to zero as
  $n \to \infty$, it will follow that $a_0^n \xrightarrow{n\to\infty}
  0$. This motivates the definition:
  \begin{align}
    \label{eq:trivial-3}
    d_i &:= \frac{\alpha_i \Gamma(\alpha_i)
      \Gamma(1-\alpha_i)}{\Gamma\left(1-\frac{\alpha_i}{\alpha_{i+1}}\right)},
    &
    b_i &:= d_i^{\alpha_1 \ldots \alpha_{i-1}}.
  \end{align}

  Now we want to show that $\prod_{i=1}^\infty b_i = 0$. Note
  that $d_i, b_i \in (0, 1)$, since $\Gamma$ is a decreasing function on
  $(0, 1)$ and $\alpha \Gamma(\alpha) = \Gamma(\alpha+1) < 1$ for $\alpha \in (0, 1)$.

  By assumption, $\sum_i (1 - \alpha_i) < \infty$. This implies that
  $\prod_{i} \alpha_i > 0$. Then:
  \begin{align*}
    \prod_i b_i = 0
    &\Leftrightarrow
    \sum_i \log b_i = - \infty
    \Leftrightarrow
    \sum_i \alpha_1 \ldots \alpha_{i-1} \log d_i = - \infty\\
    &\Leftrightarrow
    \sum_i \log d_i = - \infty
    \Leftrightarrow
    \prod_i d_i = 0
    \Leftrightarrow
    \sum_i (1 - d_i) = + \infty.
  \end{align*}

  Finally Lemma \ref{lema:di-somavel} guarantees that $\sum_i (1-d_i) =
  \infty$ whenever $\sum_i \frac{1-\alpha_{i+1}}{1-\alpha_i} = \infty$.
\end{proof}

Before proving Theorem \ref{thm:nao-trivial}, let us state an
auxiliary result:
\begin{lema}
  \label{lema:trivial-inferior}
  Let $\{\gamma_i: i \in \sN_*\}$ be the marks of a Poisson Process
  with intensity measure $\mu(dx) = \frac{c}{x^{1+\alpha}} \ind{\{x >
    0\}}$, for some $c > 0$ and $\alpha \in (0, 1)$. Taking $X :=
  \sum_i \frac{\gamma_i}{1+\gamma_i}$ and fixing an $\beta \in (0,
  1)$, it is true that:
  \begin{displaymath}
    \sE(X^\beta) \geq
    \frac{c \Gamma(\alpha) \Gamma(1-\alpha)}{\left[
        1 + c \Gamma(\alpha)\Gamma(1-\alpha)
    \right]^{1-\beta}}.
  \end{displaymath}
\end{lema}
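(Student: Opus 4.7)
The plan is to reduce the lower bound to a second-moment estimate via a size-biasing trick. The direct Jensen's inequality applied to the concave function $x\mapsto x^\beta$ gives an upper bound on $\sE[X^\beta]$, which is the wrong direction, so a change of measure is needed to flip the inequality.

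First I would set $m:=\sE[X]$; a standard Campbell computation (using Proposition~\ref{prop:ppp-invariance}-style arguments) yields
\begin{equation*}
m=\int_0^\infty\frac{x}{1+x}\,\mu(dx)=c\int_0^\infty\frac{dx}{x^\alpha(1+x)}=c\,\Gamma(\alpha)\Gamma(1-\alpha).
\end{equation*}
Since $\mu(0,\infty)=\infty$, the Poisson process has a.s.\ infinitely many marks, so $X>0$ a.s., and $\sE[X^\beta]\le m+1<\infty$. Thus I can introduce the size-biased probability measure $\sP^*$ with density $X/m$ with respect to $\sP$, and write
\begin{equation*}
\sE[X^\beta]=\sE[X\cdot X^{\beta-1}]=m\,\sE^{*}[X^{\beta-1}].
\end{equation*}
The function $\phi(x):=x^{\beta-1}$ is convex on $(0,\infty)$ since $\beta-1\in(-1,0)$, so Jensen's inequality under $\sP^*$ gives $\sE^{*}[X^{\beta-1}]\ge(\sE^{*}[X])^{\beta-1}=(\sE[X^2]/m)^{\beta-1}$, and hence
\begin{equation*}
\sE[X^\beta]\ge\frac{m^{\,2-\beta}}{\sE[X^2]^{\,1-\beta}}.
\end{equation*}

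The remaining task is to show $\sE[X^2]\le m(1+m)$, which I would establish by the variance formula for Poisson integrals. Namely, $\mathrm{Var}(X)=\int(x/(1+x))^{2}\mu(dx)$, and the substitution $y=x/(1+x)$ (or a direct Beta-function evaluation) gives
\begin{equation*}
\mathrm{Var}(X)=c\int_{0}^{\infty}\frac{x^{1-\alpha}}{(1+x)^{2}}\,dx=c\,B(2-\alpha,\alpha)=c(1-\alpha)\Gamma(\alpha)\Gamma(1-\alpha)=(1-\alpha)\,m.
\end{equation*}
Since $\alpha\in(0,1)$, this is at most $m$, so $\sE[X^2]=m^{2}+\mathrm{Var}(X)\le m(1+m)$. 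Substituting back,
\begin{equation*}
\sE[X^\beta]\ge\frac{m^{\,2-\beta}}{(m(1+m))^{\,1-\beta}}=\frac{m}{(1+m)^{1-\beta}},
\end{equation*}
which is exactly the stated lower bound with $m=c\Gamma(\alpha)\Gamma(1-\alpha)$.

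The conceptual obstacle is the sign of Jensen in the first step; once size-biasing is identified as the remedy, the Poisson-specific content is compressed into a routine Beta-integral computation of the variance, and the hypothesis $\alpha<1$ enters precisely to ensure $\mathrm{Var}(X)\le m$.
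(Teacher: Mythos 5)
Your argument is correct, and it takes a genuinely different route from the paper. The paper works through the Laplace transform: Campbell's theorem gives $\sE[e^{-\theta X}]=\exp\{-c\psi(\theta)\}$ with $\psi(\theta)=\int_0^1\frac{1-e^{-\theta y}}{y^{1+\alpha}(1-y)^{1-\alpha}}\,dy$; the bounds $\psi(\theta)\le\theta\,\Gamma(\alpha)\Gamma(1-\alpha)$ (from $1-e^{-x}\le x$) and $\psi'(\theta)\ge e^{-\theta}\Gamma(\alpha)\Gamma(1-\alpha)$ (from $e^{-\theta y}\ge e^{-\theta}$ on $[0,1]$) are then fed into the Mellin-type identity $\sE[X^\beta]=-\Gamma(1-\beta)^{-1}\int_0^\infty\theta^{-\beta}\phi'(\theta)\,d\theta$ from Proposition~\ref{prop:stable-moments}, producing the claimed constant after a Gamma-integral evaluation. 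Your proof instead reduces the fractional moment to first and second moments via size-biasing: with $m=\sE[X]=c\Gamma(\alpha)\Gamma(1-\alpha)$ you write $\sE[X^\beta]=m\,\sE^*[X^{\beta-1}]$, apply Jensen to the convex map $x\mapsto x^{\beta-1}$ under the biased law $\sP^*$, and then bound $\sE^*[X]=\sE[X^2]/m$ using the Poisson variance formula $\mathrm{Var}(X)=\int (x/(1+x))^2\mu(dx)=(1-\alpha)m\le m$. This avoids the Laplace-transform and Mellin machinery entirely and makes it transparent that the lemma is, at heart, a second-moment interpolation bound $\sE[X^\beta]\ge m^{2-\beta}/\sE[X^2]^{1-\beta}$ specialised to a Poisson integral. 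The one caveat worth stating explicitly (to justify Jensen) is that $\sE[X^2]<\infty$, which your computation of $\mathrm{Var}(X)$ delivers; and a small precision: the inequality $\mathrm{Var}(X)\le m$ uses only $\alpha\ge 0$, while $\alpha<1$ is what makes $m$ and $\mathrm{Var}(X)$ finite in the first place. Both routes land on exactly the same constant, which is a pleasant check; the paper's version has the advantage of reusing Proposition~\ref{prop:stable-moments}, already needed elsewhere, whereas yours is more elementary and self-contained.
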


\begin{proof}
  Take $\phi(\theta) := \sE(e^{-\theta X})$ the Laplace transform of
  $X$. We compute this quantity using Campbell's Theorem:
  \begin{align*}
    \phi(\theta) &= \exp\left\{
      - \int_0^\infty (1 - e^{- \theta \frac{x}{x+1}})
      \frac{c}{x^{1+\alpha}} d x
    \right\}\\
    &= \exp\left\{
      - c \int_0^1 \frac{1 - e^{- \theta y}}{y^{1+\alpha}(1-y)^{1-\alpha}} d y
    \right\}\\
    &=: \exp\left\{
      -c \psi(\theta)
    \right\}.
  \end{align*}

  The first derivative of $\psi$ can be computed and estimated as:
  \begin{align*}
    \psi^\prime (\theta)
    &= \lim_{h \to 0} \frac{\psi(\theta+h) - \psi(\theta)}{h}\\
    &= \lim_{h \to 0} \int_0^1 \frac{e^{-\theta
        y}}{y^{1+\alpha}(1-y)^{1-\alpha}}
    \frac{1-e^{-h y}}{h} d y\\
    &= \int_0^1 \frac{e^{-\theta y}}{y^{\alpha}(1-y)^{1-\alpha}} d y\\
    &\geq \int_0^1 \frac{e^{-\theta}}{y^{\alpha}(1-y)^{1-\alpha}} d
    y\\
    &= e^{-\theta}\Gamma(\alpha)\Gamma(1-\alpha),
  \end{align*}
  justifying the equality between the second and third line by the
  Dominated Convergence Theorem.

  Since $1-e^{-x} \leq x$, we can write:
  \begin{align*}
    \psi(\theta) &= \int_0^1 \frac{1 - e^{- \theta
        y}}{y^{1+\alpha}(1-y)^{1-\alpha}} d y\\
    &\leq \theta \int_0^1 \frac{1}{y^{\alpha}(1-y)^{1-\alpha}} d y\\
    &= \theta \Gamma(\alpha) \Gamma(1-\alpha)\\
    \phi(\theta) &= e^{-c \psi(\theta)}\\
    &\geq e^{- c \theta \Gamma(\alpha) \Gamma(1-\alpha)}.
  \end{align*}

  Finally we can conclude:
  \begin{align*}
    \sE(X^{\beta}) &= \sE(X^{1-(1-\beta)}) = -
    \frac{1}{\Gamma(1-\beta)} \int_0^\infty \theta^{(1-\beta)-1}
    \phi^\prime (\theta) d \theta\\
    &= \frac{1}{\Gamma(1-\beta)} \int_0^\infty \theta^{-\beta}
    c \phi(\theta) \psi^\prime (\theta) d \theta\\
    &\geq
    \frac{c}{\Gamma(1-\beta)} \int_0^\infty \theta^{-\beta}
    e^{- c \theta \Gamma(\alpha) \Gamma(1-\alpha)}
    e^{-\theta}\Gamma(\alpha)\Gamma(1-\alpha) d \theta\\
    &=
    \frac{c\Gamma(\alpha)\Gamma(1-\alpha)}
    {[1+c\Gamma(\alpha)\Gamma(1-\alpha)]^{1-\beta}}.
    \qedhere
  \end{align*}
\end{proof}

\begin{proof}[Proof of Theorem \ref{thm:nao-trivial}]
  Again we will show the claim only to $\theta_1^\infty$. We can
  extend the proof to the general case in a straightforward manner by
  using the fact that $\widetilde{\theta}_1^n =
  \widetilde{\theta}_{k+1}^n \circ \theta_1^{k}$.

  Let us define $a_k^n := \sE \left[ ( Z_{x|_k}^n)^{\alpha_k}
  \right]$. We will show that $\liminf_{n \to \infty} a_1^n >
  0$. Using Propositions \ref{prop:laplace-clock} and
  \ref{prop:ppp-invariance} we can write:
  \begin{align*}
    \sE \left[
      e^{-\lambda \widetilde{\theta}_1^n(t)}
    \right]
    = \sE \left[ \exp\left\{
      -t \sum_{x1} \gamma_1(x|_1)  Z_{x|_1}^n
    \right\} \right]
    = \sE \left[ \exp\left\{
      -t \sum_{x1} \gamma_1(x|_1)  (a_1^n)^{1/\alpha_1}
    \right\} \right].
  \end{align*}

  If we show that $\liminf_{n \to \infty} a_1^n > 0$, it will follow
  from the expression above that $\lim_{t \to \infty}
  \theta_1^\infty(t) = \infty$ in probability. Since $\theta_1^\infty$
  is \emph{a.s.} non decreasing, it follows that $\lim_{t \to \infty}
  \theta_1^\infty(t) = \infty$ \emph{a.s.}.

  To show that $\liminf_{n \to \infty} a_1^n > 0$, let us start by
  applying Lemma \ref{lema:trivial-inferior} and Proposition
  \ref{prop:ppp-invariance} to write that for $k \leq n$:
  \begin{align}
    \notag
    a_{k-1}^n 
    &\geq \frac{c_k a_k^n \Gamma(\alpha_k) \Gamma(1-\alpha_k)}{
      \left[
        1 + c_k a_k^n \Gamma(\alpha_k) \Gamma(1-\alpha_k)
      \right]^{1-\alpha_{k-1}}
    }\\
    \label{eq:nao-trivial-1}
    &= \frac{a_k^n
      \frac{\alpha_k \Gamma(\alpha_k)
        \Gamma(1-\alpha_k)}{\Gamma(1-\alpha_k/\alpha_{k+1})}
    }{
      \left[
        1 + a_k^n
      \frac{\alpha_k \Gamma(\alpha_k)
        \Gamma(1-\alpha_k)}{\Gamma(1-\alpha_k/\alpha_{k+1})}
      \right]^{1-\alpha_{k-1}}
    }
  \end{align}

  Let us first work with the denominator. For this look at
  \eqref{eq:trivial-2}. Although the definition of $a_k^n$ is
  slightly different on that proof, we can still use Jensen's
  inequality to obtain:
  \begin{align*}
    a_k^n &:= \sE\left[ (Z_{x|_k}^n)^{\alpha_k} \right]
    \leq \left( \sE\left[ Z_{x|_k}^n \right] \right)^{\alpha_k}\\
    &\leq \left(
      (\lambda^{\alpha_n} \alpha_n \Gamma(\alpha_n))^{\alpha_{k+1} \ldots \alpha_{n-1}}
      \prod_{i=k+1}^{n-1} \left[
        \frac{\alpha_i \Gamma(\alpha_i)
          \Gamma(1-\alpha_i)}{\Gamma\left(1-\frac{\alpha_i}{\alpha_{i+1}}\right)}
      \right]^{\alpha_{k+1} \ldots \alpha_{i-1}}
    \right)^{\alpha_k}\\
    &\leq \lambda^{\alpha_k \ldots \alpha_n}
    \leq max\{\lambda, 1\}
  \end{align*}
 
  Letting $\delta := \max\{\lambda, 1\}$ and using this expression on
  the denominator of \eqref{eq:nao-trivial-1}, we obtain:
  \begin{align}
    \label{eq:nao-trivial-2}
    a_{k-1}^n
    &\geq
    a_k^n \left( 
      1 + \delta
    \right)^{-(1-\alpha_{k-1})}
    \frac{\alpha_k \Gamma(\alpha_k)
      \Gamma(1-\alpha_k)}{\Gamma(1-\alpha_k/\alpha_{k+1})}.
  \end{align}

  Knowing that $0 < \alpha_i < \alpha_i/\alpha_{i+1} < 1$ and $\Gamma$
  is a decreasing function near zero, we can iterate this inequality
  to obtain:
  \begin{align}
    \notag
    a_k^n &\geq \lambda^{\alpha_n} \frac{\Gamma\left(1-\frac{\alpha_n}{\alpha_{n+1}}
      \right)}{\Gamma(1-\alpha_n)}
    (1+\delta)^{-\sum_{j=k}^{n-1} (1-\alpha_j)}
    \prod_{j=k+1}^n \frac{\alpha_j \Gamma(\alpha_j)
      \Gamma(1-\alpha_j)}{\Gamma\left(1-\frac{\alpha_j}{\alpha_{j+1}}
      \right)}\\
    \label{eq:nao-trivial-3}
    &\geq \lambda^{\alpha_n}
    (1+\delta)^{-\sum_{j=k}^{n-1} (1-\alpha_j)}
    \prod_{j=k+1}^n \frac{\alpha_j \Gamma(\alpha_j)
      \Gamma(1-\alpha_j)}{\Gamma\left(1-\frac{\alpha_j}{\alpha_{j+1}}
      \right)}.
  \end{align}

  Note that the terms in the product at the end of this last
  expression are exactly equal to $d_i$, defined in Lemma
  \ref{lema:di-somavel}. We have shown in that Lemma that, whenever
  $\sum_i \frac{1-\alpha_{i+1}}{1-\alpha_i} < \infty$, we have that $\sum_i
  (1-d_i) < \infty$, which implies that $\prod_i d_i > 0$.

  Therefore we conclude that $\liminf_{n \to \infty} a_k^n > 0$. To
  complete the proof of the theorem, we have to show that
  $\theta_1^\infty$ is \emph{a.s.} strictly increasing. We will do
  this by showing that $\lim_{\lambda \to \infty} \liminf_{n \to
    \infty} Z_\emptyset^n(\lambda) = \infty$ in probability.

  Since we are interested now only on large $\lambda$, we can assume
  $\lambda > 1$. Taking a constant $C > 0$ such that
  $\prod_{i=1}^\infty d_i > C$ and using the definition of $\delta$,
  we can rewrite \eqref{eq:nao-trivial-3} as:
  \begin{align*}
    a_k^n(\lambda)
    &\geq C \lambda^{\alpha_n}
    (1+\lambda)^{-\sum_{j=k}^{n-1} (1-\alpha_j)},\\
    \liminf_{n \to \infty} a_k^n(\lambda)
    &\geq C \lambda
    (1+\lambda)^{-\sum_{j=k}^{\infty} (1-\alpha_j)}.
  \end{align*}

  Now fix a $k$ such that $\sum_{i \geq k} (1-\alpha_i) < 1$. 
  From the inequality above, we may conclude that
  $\lim_{\lambda \to \infty} \liminf_{n \to\infty} a_{k}^n(\lambda) = \infty$.
  
  Using Proposition \ref{prop:ppp-invariance}, and taking an arbitrary $M
  \in \sN$, we can write:
  \begin{align*}
    Z_{x|_{k-1}}^n &= \sum_{x_k} \frac{\gamma_k(x|_k)
      Z_{x|_k}^n(\lambda)}{1+\gamma_k(x|_k) Z_{x|_k}^n(\lambda)}\\
    &\substack{\DD\\=} \sum_{x_k} \frac{\gamma_k(x|_k)
      (a_k^n(\lambda))^{1/\alpha_k}}{1+\gamma_k(x|_k)
      (a_k^n(\lambda))^{1/\alpha_k}}\\
    &\geq  \sum_{x_k = 1}^M \frac{\gamma_k(x|_k)
      (a_k^n(\lambda))^{1/\alpha_k}}{1+\gamma_k(x|_k)
      (a_k^n(\lambda))^{1/\alpha_k}}
    \xrightarrow[n \to \infty, \lambda \to \infty]{\emph{a.s.}} M.
  \end{align*}

  Since $M$ is arbitrary, it follows that
  $\lim_{\lambda \to \infty} \liminf_{n \to \infty}
  Z_{x|_{k-1}}^n(\lambda) = \infty$ in probability.  Knowing that
  $Z_{x|_{k-2}}^n = \sum_{x_{k-1}} h_{x|_{k-1}} (Z_{x|_{k-1}}^n)$, we
  can use analogous arguments to show that $\lim_{\lambda \to \infty}
  \liminf_{n \to \infty} Z_{x|_{k-2}}^n = \infty$ in
  probability. Iterating we conclude that:
    \begin{displaymath}
      \lim_{\lambda \to \infty} \liminf_{n \to \infty} Z_\emptyset^n
      (\lambda) = \infty \text{ in probability.}
      \qedhere
    \end{displaymath}
\end{proof}



%
\section{Convergence}
\label{sec:conv-k-proc}

A natural question to ask after constructing the K process on a tree with
infinite depth ($\sY$) is whether this process is the limit, in some
sense, of the K processes on trees with finite depth ($X_k$) as the
depth grows to infinity. We will address this question in this section
and prove Theorem \ref{thm:conv-infinite-process} stated in Section
\ref{sec:constr-main-result}.

\begin{lema}
  \label{lema:w-control}
  Suppose that $\frac{1-\alpha_{k+1}}{1-\alpha_k} \to 0$ as $k \to
  \infty$. Then:
  \begin{equation}
    \label{eq:w-control}
    \lim_{k \to \infty} \sE \left[ \left| W(x|_k) - 1 \right|^{\alpha_k} \right]
    = 0
  \end{equation}
\end{lema}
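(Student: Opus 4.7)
My plan is to prove this by first computing $\sE[W(x|_k)^{\alpha_k}]$ exactly, showing it tends to $1$, and then leveraging a standard uniform integrability argument to conclude. Since $W(x|_k)$ has Laplace transform $\exp(-\lambda^{\alpha_{k+1}})$ by Proposition~\ref{prop:def-W}, Proposition~\ref{prop:stable-moments} (applied with $c=1$, $\alpha=\alpha_{k+1}$, $\beta=\alpha_k<\alpha_{k+1}$) yields
\begin{equation*}
\sE\bigl[W(x|_k)^{\alpha_k}\bigr] = \frac{\Gamma(1-\alpha_k/\alpha_{k+1})}{\Gamma(1-\alpha_k)}.
\end{equation*}
Using $\Gamma(\epsilon)\sim 1/\epsilon$ as $\epsilon\to 0^+$, together with the identity
\begin{equation*}
1 - \frac{\alpha_k}{\alpha_{k+1}} = \frac{1-\alpha_k}{\alpha_{k+1}}\left[1 - \frac{1-\alpha_{k+1}}{1-\alpha_k}\right],
\end{equation*}
the hypothesis $(1-\alpha_{k+1})/(1-\alpha_k)\to 0$ and $\alpha_{k+1}\to1$ give $\sE[W(x|_k)^{\alpha_k}]\to 1$.

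Next I would observe that $W(x|_k)\to 1$ in probability. Indeed, since $\alpha_{k+1}\to 1$, the Laplace transform $\exp(-\lambda^{\alpha_{k+1}})$ converges pointwise to $\exp(-\lambda)$, which is the transform of the Dirac mass at $1$; convergence in distribution to a constant is equivalent to convergence in probability. Combined with $\alpha_k\to 1$, this yields $W(x|_k)^{\alpha_k}\to 1$ in probability. By the standard fact that a sequence of nonnegative random variables converging in distribution to a (nonnegative) limit with matching convergent expectations also converges in $L^1$, we conclude $W(x|_k)^{\alpha_k}\to 1$ in $L^1$, and in particular the family $\{W(x|_k)^{\alpha_k}\}_k$ is uniformly integrable.

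To close the argument, I would decompose
\begin{equation*}
|W(x|_k)-1|^{\alpha_k} = (W(x|_k)-1)_+^{\alpha_k} + (1-W(x|_k))_+^{\alpha_k}.
\end{equation*}
The second term is bounded by $1$ and tends to $0$ in probability, so bounded convergence disposes of it. The first term is bounded by $W(x|_k)^{\alpha_k}$ (since $(w-1)_+\le w$ for $w\ge 0$), which is uniformly integrable by the previous paragraph, and it tends to $0$ in probability; Vitali's theorem then gives that its expectation tends to $0$ as well.

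The main subtlety is resisting the temptation to compare $|W-1|^{\alpha_k}$ directly with the cleaner $|W^{\alpha_k}-1|$: near $W=1$ these differ substantially (the former is of order $|W-1|^{\alpha_k}$, the latter of order $\alpha_k|W-1|$), so one cannot straightforwardly transfer $L^1$-control from one to the other. The bypass is to never actually compare the two expressions, but to extract uniform integrability directly from the closed-form $\alpha_k$-moment of the stable law, and to exploit the monotone bound $(W-1)_+^{\alpha_k}\le W^{\alpha_k}$ which respects the $\alpha_k$-power automatically.
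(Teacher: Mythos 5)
Your proof is correct, and it takes a genuinely different route from the paper's. The paper converts the Laplace transform of $W(x|_k)$ into a characteristic function of $W(x|_k)-1$ (via Proposition 1.1.12 of Samorodnitsky--Taqqu), then invokes Laue's fractional-moment formula to write $\sE\left[|W(x|_k)-1|^{\alpha_k}\right]$ as a constant times the real part of an integral against that characteristic function, and finally carries out hands-on estimates by splitting the integral at a small threshold $\epsilon$, using $1-e^{-x}\le x$ and a Taylor bound for $-\log\cos$ near the origin. Your argument instead applies the paper's own Proposition~\ref{prop:stable-moments} (with $c=1$, $\alpha=\alpha_{k+1}$, $\beta=\alpha_k$) to compute $\sE\left[W(x|_k)^{\alpha_k}\right]=\Gamma(1-\alpha_k/\alpha_{k+1})/\Gamma(1-\alpha_k)$ exactly, shows this tends to $1$ using $\epsilon\Gamma(\epsilon)\to1$, upgrades weak convergence $W(x|_k)\Rightarrow1$ to $L^1$-convergence of $W(x|_k)^{\alpha_k}$ via the Scheff\'e-type lemma for nonnegative variables converging in probability with converging means, and closes with the decomposition $|W-1|^{\alpha_k}=(W-1)_+^{\alpha_k}+(1-W)_+^{\alpha_k}$, bounded convergence for the second piece, and Vitali for the first (dominated by the uniformly integrable $W^{\alpha_k}$ through $(w-1)_+\le w$). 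Your route is more elementary in its toolbox --- it avoids the external Laue reference and the $\epsilon$-split hard-analysis estimate --- and it exposes the probabilistic mechanism (concentration of $W$ at $1$ plus tightness of the $\alpha_k$-moments) more transparently; the paper's approach is more direct in writing the target expectation in closed integral form, at the cost of invoking a less standard fractional-moment identity. One minor point of bookkeeping: in the Scheff\'e step you say ``converging in distribution,'' but you should lean on the convergence in probability you have already established (or invoke Skorohod representation) to make the $L^1$-convergence statement literally meaningful; this is a phrasing issue, not a gap, since you do establish convergence in probability first.
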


We note that the condition of Lemma~\ref{lema:w-control} holds under the 
non-triviality condition~\eqref{eq:nao-trivial}.

\begin{proof}
  We know the Laplace transform of $W(x|_k)$ from Proposition
  \ref{prop:def-W}. With it we can apply Proposition 1.1.12 from
  \cite{kn:st} to obtain the characteristic function of
  $W(x|_k) - 1$:
  \begin{align}
    \label{eq:w-control-1}
    \varphi_k (u) &:= \sE \left[ e^{i u (W(x|_k) - 1)} \right]\\
    \notag
    &= \exp\left\{
      - |u|^{\alpha_{k+1}} \left[
        \cos\left(\frac{\pi \alpha_{k+1}}{2} \right)
        -i \sgn(u) \sin\left( \frac{\pi \alpha_{k+1}}{2} \right)
      \right] - i u
    \right\}\\
    \notag
    &= \exp\left\{
      - |u|^{\alpha_{k+1}} \cos\left(\frac{\pi \alpha_{k+1}}{2} \right)
        -i \left[ u - |u|^{\alpha_{k+1}} \sgn(u) \sin\left( \frac{\pi \alpha_{k+1}}{2} \right)
      \right]
    \right\}.
  \end{align}

  Theorem 2.2 from \cite{kn:l} states that:
  \begin{align*}
    \sE \left[
       \left| W(x|_k) - 1 \right|^{\alpha_k}
     \right] &=
     \frac{1}{\cos\left( \frac{\pi \alpha_k}{2} \right)} \real \left[
       \frac{\alpha_k}{\Gamma(1-\alpha_k)} \int_0^\infty \frac{1 -
         \varphi_k(-u)}{u^{1+\alpha_k}} d u
     \right].
  \end{align*}

  Note that $ \frac{\alpha_k}{\cos\left( \frac{\pi \alpha_k}{2}\right)
    \Gamma(1-\alpha_k)}$ converges to $\frac{2}{\pi}$ as $k \to
  \infty$. So we are left with showing that the real part of the integral
  converges to zero. Fixing an arbitrary $\epsilon > 0$ we can write:
  \begin{align}
    \notag
    \real& \left[
      \int_0^\infty \frac{1 -
        \varphi_k(-u)}{u^{1+\alpha_k}} d u
     \right]\\
     \notag
     &= \int_0^\infty \frac{1}{u^{1+\alpha_{k}}}
     \left[ 1 - \exp\left\{
         -u^{\alpha_{k+1}} \cos\left(\frac{\pi \alpha_{k+1}}{2}
         \right)
       \right\} 
       \cos\left( u -
         u^{\alpha_{k+1}} \sin \left(\frac{\pi \alpha_{k+1}}{2}
         \right)
       \right)
     \right] d u \\
     \label{eq:w-control-2}
     &= \int_0^\epsilon \frac{1}{u^{1+\alpha_{k}}}
     \left[ 1 - \exp\left\{
         -u^{\alpha_{k+1}} \cos\left(\frac{\pi \alpha_{k+1}}{2}
         \right)
       \right\}
       \cos\left( u -
         u^{\alpha_{k+1}} \sin \left(\frac{\pi \alpha_{k+1}}{2}
         \right)
       \right)
     \right] d u \\
     \label{eq:w-control-3}
     &+ \int_\epsilon^\infty \frac{1}{u^{1+\alpha_{k}}}
     \left[ 1 - \exp\left\{
         -u^{\alpha_{k+1}} \cos\left(\frac{\pi \alpha_{k+1}}{2}
         \right)
       \right\}
       \cos\left( u -
         u^{\alpha_{k+1}} \sin \left(\frac{\pi \alpha_{k+1}}{2}
         \right)
       \right)
     \right] d u.
  \end{align}

  To control \eqref{eq:w-control-3}, note that the 
  integrand converges to zero as $k \to \infty$ and can be bounded by
  $2/u^{1+\alpha_k} \leq 2/u^{3/2}$ for big enough $k$. Therefore, by
  dominated convergence, \eqref{eq:w-control-3} converges
  to zero as $k \to \infty$ for any choice of $\epsilon > 0$.

  We control \eqref{eq:w-control-2}, using $1-e^{-x} \leq x$, as follows. 
  It is bounded above by
  \begin{align}\notag
    &\int_0^\epsilon \frac{1}{u^{1+\alpha_{k}}}
     \left[
       u^{\alpha_{k+1}} \cos\left(\frac{\pi \alpha_{k+1}}{2} \right)
       - \log \cos\left( u -
         u^{\alpha_{k+1}} \sin \left(\frac{\pi \alpha_{k+1}}{2}
         \right)
       \right)
     \right] d u\\
     \notag
     &=
     \int_0^\epsilon \frac{1}{u^{1+\alpha_{k}}}
     u^{\alpha_{k+1}} \cos\left(\frac{\pi \alpha_{k+1}}{2} \right)
     d u
     - \int_0^\epsilon \frac{1}{u^{1+\alpha_{k}}}
     \log \cos\left( u -
       u^{\alpha_{k+1}} \sin \left(\frac{\pi \alpha_{k+1}}{2}
       \right)
     \right) d u\\
     \label{eq:w-control-4}
     &= \frac{\epsilon^{\alpha_{k+1} - \alpha_k}}{\alpha_{k+1} -
       \alpha_k} \cos\left(\frac{\pi \alpha_{k+1}}{2} \right)
     - \int_0^\epsilon  \frac{1}{u^{1+\alpha_{k}}}
     \log \cos\left( u -
       u^{\alpha_{k+1}} \sin \left(\frac{\pi \alpha_{k+1}}{2}
       \right)
     \right) d u.
  \end{align}

  Since $\frac{1-\alpha_{k+1}}{1-\alpha_k} \xrightarrow{k \to \infty}
  0$ by hypothesis and $\cos(\pi x /2)/(1-x) \xrightarrow{x \to 1}
  \frac{\pi}{2}$, we can rewrite the leftmost term from
  \eqref{eq:w-control-4} as:
  \begin{align*}
    \frac{\epsilon^{\alpha_{k+1} - \alpha_k}}{\alpha_{k+1} -
      \alpha_k}& \cos\left(\frac{\pi \alpha_{k+1}}{2} \right)
    = \epsilon^{\alpha_{k+1} - \alpha_k}
    \frac{1- \alpha_{k+1}}{\alpha_{k+1} - \alpha_k}
    \frac{\cos\left(\frac{\pi \alpha_{k+1}}{2}
      \right)}{1-\alpha_{k+1}}\\
    &= \epsilon^{\alpha_{k+1} - \alpha_k}
    \left(
      \frac{1 - \alpha_k}{1- \alpha_{k+1}} - 1
    \right)^{-1}
    \frac{\cos\left(\frac{\pi \alpha_{k+1}}{2}
      \right)}{1-\alpha_{k+1}}
    \xrightarrow{k \to \infty} 0.
  \end{align*}

  To control the integral in \eqref{eq:w-control-4}, let us first remark that
  there exists an $\epsilon_0 > 0$ such that if $|x| < \epsilon_0$ then
  $-\log \cos x < x^2$. 
  With this, for small enough $\epsilon > 0$ we can write:
  \begin{align*}
    - \int_0^\epsilon& \frac{1}{u^{1+\alpha_{k}}}
    \log \cos\left( u -
      u^{\alpha_{k+1}} \sin \left(\frac{\pi \alpha_{k+1}}{2}
      \right)
    \right) d u\\
    &\leq \int_0^\epsilon \frac{1}{u^{1+\alpha_{k}}}
    \left( u -
      u^{\alpha_{k+1}} \sin \left(
        \frac{\pi \alpha_{k+1}}{2}
      \right)
    \right)^2 d u\\
    &= \int_0^\epsilon
    u^{1-\alpha_k}
    - 2 u^{\alpha_{k+1} - \alpha_k}  \sin\left( \frac{\pi \alpha_{k+1}}{2} \right)
    + u^{2 \alpha_{k+1} - \alpha_k - 1}  \sin^2\left( \frac{\pi \alpha_{k+1}}{2} \right)
    d u\\
    &=
    \frac{\epsilon^{2-\alpha_k}}{2-\alpha_k}
    - 2 \frac{\epsilon^{\alpha_{k+1} - \alpha_k + 1}}{\alpha_{k+1} -
      \alpha_k + 1}  \sin\left( \frac{\pi \alpha_{k+1}}{2} \right)
    + \frac{\epsilon^{2 \alpha_{k+1} - \alpha_k}}{2\alpha_{k+1} -
      \alpha_k}  \sin^2\left( \frac{\pi \alpha_{k+1}}{2} \right)\\
    &\xrightarrow{k \to \infty} 0.
    \qedhere
  \end{align*}
\end{proof}

\begin{proposicao}[Finite dimensional convergence]
  \label{prop:clock-fin-dim}
  If $\sum_k (1-\alpha_k) < \infty$, then for every fixed $t > 0$ and
  $k \in \sN_*$, $\theta_k^n(t)$ converges in probability to
  $\theta_k^\infty(t)$ as $n \to \infty$.
\end{proposicao}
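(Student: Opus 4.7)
Since Theorem~\ref{thm:limit-clock} gives $\widetilde\theta_k^n(t)\to\theta_k^\infty(t)$ almost surely, it suffices to show $\theta_k^n(t)-\widetilde\theta_k^n(t)\to 0$ in probability. My plan is to compute the joint Laplace transform of the pair $(\theta_k^n(t),\widetilde\theta_k^n(t))$, show it converges to $\sE[e^{-(\lambda+\mu)\theta_k^\infty(t)}]$, and conclude that the pair converges jointly in distribution to the diagonal $(\theta_k^\infty(t),\theta_k^\infty(t))$. This yields $\theta_k^n(t)-\widetilde\theta_k^n(t)\to 0$ in distribution, and hence in probability.

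The joint Laplace transform is computed by adapting the proof of Proposition~\ref{prop:laplace-clock}. Since $\theta_k^n$ and $\widetilde\theta_k^n$ share the same inner compositions $\clock_{n-1}\circ\cdots\circ\clock_k$ and differ only at the outermost clock, writing $s=\theta_k^{n-1}(t)$ one has
\[
\lambda\,\clock_n(s)+\mu\,\widetilde\clock_n(s) = \sum_{x_n}\sum_{i:\,\sigma_i^{n,x_n}\leq s}(\lambda+\mu W(X_{n-1}(\sigma_i^{n,x_n})x_n))\,\gamma_n(X_{n-1}(\sigma_i^{n,x_n})x_n)\,T_i^{n,x_n}.
\]
Averaging out the exponentials $T_i^{n,x_n}$ and then unrolling the remaining composition levels $n-1,\ldots,k$ as in the proof of Proposition~\ref{prop:laplace-clock} would give
\[
\sE[e^{-\lambda\theta_k^n(t)-\mu\widetilde\theta_k^n(t)}] = \sE\left[\exp\left\{-\sum_{x|_{k-1}} L_{k-1}(x|_{k-1},t)\,\mathcal{Z}^n_{x|_{k-1}}\right\}\right],
\]
with $\mathcal{Z}^n_{x|_n}=\lambda+\mu W(x|_n)$ and $\mathcal{Z}^n_{x|_j}=\sum_{x_{j+1}} h_{x|_{j+1}}(\mathcal{Z}^n_{x|_{j+1}})$ for $j<n$. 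Setting $\mu=0$ (resp.\ $\lambda=0$) recovers the formulas for $\theta_k^n$ (resp.\ $\widetilde\theta_k^n$) of Proposition~\ref{prop:laplace-clock}.

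The remaining task is to verify that this joint Laplace transform tends to $\sE[e^{-(\lambda+\mu)\theta_k^\infty(t)}]$. Note that the Laplace transform of $(\lambda+\mu)\widetilde\theta_k^n(t)$, which is known to converge to this target, corresponds to the same recursion but starting from $(\lambda+\mu)W(x|_n)$ instead of $\lambda+\mu W(x|_n)$. The two starting values differ by $\lambda(1-W(x|_n))$, and since $W(x|_n)$ has Laplace transform $e^{-\lambda^{\alpha_{n+1}}}$ and $\alpha_{n+1}\to 1$ under $\sum(1-\alpha_k)<\infty$, we have $W(x|_n)\to 1$ in distribution.

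The main obstacle will be propagating this innermost-level perturbation through the $n-k$ compositions of $\sum_{x_{j+1}}h_{x|_{j+1}}(\cdot)$ to show both recursions share the same limit. I expect to iterate Jensen-type estimates on $\alpha_k$-moments of the $\mathcal{Z}^n_{x|_j}$, in the spirit of the proofs of Theorems~\ref{thm:trivial} and~\ref{thm:nao-trivial}, and combine them with the continuity/contraction properties of the $h_{x|_{j+1}}$ maps (as exploited in Proposition~\ref{prop:composition-W}), to show that the $\lambda(1-W(x|_n))$ discrepancy is damped by the recursion. In the trivial regime, both transforms collapse to $1$ as in Theorem~\ref{thm:trivial}; in the nontrivial regime, the $W(x|_n)\to 1$ convergence, combined with these moment bounds, should carry the desired Laplace transform equality through.
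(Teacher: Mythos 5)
Your overall strategy is sound and runs parallel to the paper's: both reduce the problem to showing $\theta_k^n(t)-\widetilde\theta_k^n(t)\to 0$ in probability, then control the discrepancy via the nested $h$-recursion whose innermost input involves $W(x|_n)-1$. The paper, however, takes a more direct route: using the representation $\widetilde\clock_n(s)=\sum_{x|_n}W(x|_n)L_n(x|_n,\clock_n(s))$ one gets
\[
\bigl|\theta_1^n(t)-\widetilde\theta_1^n(t)\bigr|\ \le\ \sum_{x|_n}\bigl|1-W(x|_n)\bigr|\,L_n\bigl(x|_n,\theta_1^n(t)\bigr),
\]
and then bounds the (single) Laplace transform of the right-hand side via Proposition~\ref{prop:laplace-clock} with $Z_{x|_n}=|W(x|_n)-1|$. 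Your joint Laplace transform of $(\theta_k^n(t),\widetilde\theta_k^n(t))$ is a valid object and the diagonal-convergence conclusion you draw from it is correct, but once you try to carry out the comparison between the two recursions (one started at $\lambda+\mu W$, the other at $(\lambda+\mu)W$), you end up rederiving exactly this bound: $|h_{x}(a)-h_{x}(b)|\le h_{x}(|a-b|)$ propagates the pointwise discrepancy $\lambda|1-W(x|_n)|$ through the composition, so the two routes collapse onto the same estimate. Your version is a detour rather than a distinct argument.

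The concrete gap is what controls the innermost term. You justify the damping by asserting that $W(x|_n)\to 1$ in distribution because its Laplace transform is $e^{-\lambda^{\alpha_{n+1}}}$ with $\alpha_{n+1}\to 1$. That is true but not sufficient: what the recursion actually requires, exactly as in the moment bookkeeping of Theorem~\ref{thm:trivial}, is the quantitative estimate $\sE\bigl[|W(x|_n)-1|^{\alpha_n}\bigr]\to 0$. This is not automatic from distributional convergence (the $W(x|_n)$ have heavier and heavier tails as $n$ grows, and uniform integrability of $|W(x|_n)-1|^{\alpha_n}$ is precisely what is at issue). The paper proves this separately as Lemma~\ref{lema:w-control}, via a nontrivial computation with the characteristic function and the fractional-moment formula of \cite{kn:l}, and it is this lemma that seeds the recursion with a vanishing $a_n^n$. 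Your sketch omits it entirely, and the phrase ``I expect to iterate Jensen-type estimates'' leaves the core of the argument unfilled. Two further points you do not address: (i) the case where only $\sum_k(1-\alpha_k)<\infty$ holds but the nontriviality condition fails, where the paper falls back on the triviality argument; and (ii) the passage from $k=1$ to general $k$, which the paper handles via the decomposition $\theta_{k+1}^n(t)=\sum_{x|_k}\theta_{x|_k}^n(L_k(x|_k,t))$ from Remark~\ref{obs:pedacos-clock} together with the independence of $L$ and the $Z$'s, a step that is not a routine relabeling.
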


\begin{proof}
  We will assume that $\frac{1-\alpha_{k+1}}{1-\alpha_k} \to 0$ as $k
  \to \infty$. When this is not true then the conditions of Theorem
  \ref{thm:trivial} hold and we can use an analogous argument to show
  that $\theta_k^n(t) \xrightarrow{n\to\infty} 0$ in probability for
  every $t > 0$.

  Let us first show the case $k = 1$.  Taking $Z_{x|_n} = |W(x|_n) -
  1|$ in the proof of Proposition
  \ref{prop:laplace-clock} we obtain:
  \begin{align}
    \notag
    &\sE \left[
      \exp\left\{
        - |\theta_1^n(t) - \widetilde{\theta}_1^n(t)|
      \right\}
    \right] \geq
    \sE \left[
      \exp\left\{
        - \sum_{x|_n} L(x|_n, \theta_1^n(t))
        \left|1 - W(x|_n)\right|
      \right\}
    \right]\\
    \label{eq:clock-fin-dim-1}
    &=
    \sE \left[ \exp \left\{
        -t \sum_{x_1} h_{x|_1} \left(
          \sum_{x_2} h_{x|_2} \left(
            \cdots
            \sum_{x_n} h_{x|_n} \left( |W(x|_n) - 1| \right)
            \cdots
          \right)
        \right)
      \right\} \right].
  \end{align}

  Now proceeding as in Theorem \ref{thm:trivial}, let us define:
  \begin{displaymath}
    Z_{x|_k}^n := \begin{cases}
      \sum_{x_{k+1}} h_{x|_{k+1}} \left( Z_{x|_{k+1}}^n \right),
        & \!\text{ if } k < n;\\
        |W(x|_n) - 1|, & \!\text{ if } k = n
    \end{cases};
    \quad
    a_k^n := \begin{cases}
      \sE \left[
        Z_{x|_k}^n
      \right],
      & \!\text{ if } k < n;\\
      \sE \left[
        |W(x|_n) - 1|^{\alpha_n}
      \right], & \!\text{ if } k = n.
    \end{cases}
  \end{displaymath}

  Lemma \ref{lema:w-control} states that $a_n^n \to 0$ as $n \to
  \infty$.  Following the proof of Theorem \ref{thm:trivial}, we get
  \eqref{eq:trivial-1} and the first equality of
  \eqref{eq:trivial-1-2}, from which we obtain that $a_{k-1}^n \leq
  a_k^n$ for every $k \leq n$. Therefore $a_k^n \leq a_n^n
  \xrightarrow{n \to \infty} 0$ for every $k < n$ and $Z_\emptyset^n
  \to 0$ in the $L_1$ norm as $n \to \infty$.  By applying the Dominated
  Convergence Theorem on~\eqref{eq:clock-fin-dim-1}, we conclude the
  case $k = 1$.

  For the general case, using Remark \ref{obs:pedacos-clock} and the
  previous result, we can write:
  \begin{align}
    \notag
    \sE \left[
      e^{-|\theta_{k+1}^n(t) - \widetilde{\theta}_{k+1}^n(t)|}
    \right] &\geq
    \sE \left[
      \exp\left\{
        - \sum_{x|_k} \left|
          \theta_{x|_k}^n(L(x|_k, t)) - \widetilde{\theta}_{x|_k}^n(L(x|_k, t))
      \right|
      \right\}
    \right]\\
    \label{eq:clock-fin-dim-2}
    &=
    \sE \left[
      \exp\left\{
        - \sum_{x|_k} L(x|_k, t) Z_{x|_k}^n
      \right\}
    \right].
  \end{align}

  Note that, since $L(x|_k, t)$ is independent from $Z_{x|_k}^n$, then:
  \begin{align*}
    \sE \left[
      \sum_{x|_k} L(x|_k, t)  Z_{x|_k}^n
    \right]
    =
    \sum_{x|_k}
    \sE \left[
         L(x|_k, t)  Z_{x|_k}^n
      \right]
    = t a_k^n \xrightarrow{n \to \infty} 0.
  \end{align*}

  We conclude the proof by applying the Dominated Convergence Theorem on
  \eqref{eq:clock-fin-dim-2}.
\end{proof}

\begin{lema}
  \label{lema:sup-sum-gamma}
  Almost surely:
  \begin{equation}
    \label{eq:sup-sum-gamma}
    \sup_{n} \sum_{x|_n} \widebar{\gamma}_n(x|_n) < \infty.
  \end{equation}
\end{lema}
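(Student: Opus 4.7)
The plan is to leverage the martingale constructed in the proof of Proposition \ref{prop:def-W}. Recall that $Z_n(1) = \exp\{-\sum_{y|_n} \bar\gamma_n(y|_n)^{\alpha_{n+1}}\}$ was shown there to be a nonnegative martingale (with respect to the environment filtration $\DD_n$) and hence converges a.s.\ to an a.s.\ positive, finite limit. Setting
\[
V_n := \sum_{x|_n} \bar\gamma_n(x|_n)^{\alpha_{n+1}} = -\log Z_n(1),
\]
it follows that $V_n \to V_\infty$ a.s.\ with $V_\infty < \infty$, so in particular $V^* := \sup_n V_n < \infty$ a.s.

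The key observation is then that each individual term in $V_n$ is controlled by the whole sum: $\bar\gamma_n(x|_n)^{\alpha_{n+1}} \leq V_n$, and therefore $\bar\gamma_n(x|_n) \leq V_n^{1/\alpha_{n+1}}$. Splitting based on whether $\bar\gamma_n(x|_n) \geq 1$ or $<1$, in the first case we have
\[
\bar\gamma_n(x|_n) = \bar\gamma_n(x|_n)^{\alpha_{n+1}} \cdot \bar\gamma_n(x|_n)^{1-\alpha_{n+1}} \leq \bar\gamma_n(x|_n)^{\alpha_{n+1}} \cdot V_n^{(1-\alpha_{n+1})/\alpha_{n+1}},
\]
while in the second case (since $0 < \alpha_{n+1} < 1$)
\[
\bar\gamma_n(x|_n) \leq \bar\gamma_n(x|_n)^{\alpha_{n+1}}.
\]
Combining both cases and summing over $x|_n$ yields the deterministic-looking bound
\[
\sum_{x|_n} \bar\gamma_n(x|_n) \leq \max\{1, V_n^{(1-\alpha_{n+1})/\alpha_{n+1}}\} \cdot V_n.
\]

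Finally, since $\alpha_n \to 1$ is part of the standing hypotheses, the exponent $(1-\alpha_{n+1})/\alpha_{n+1} \to 0$, and because $V_n$ converges a.s.\ to a finite positive limit we get $V_n^{(1-\alpha_{n+1})/\alpha_{n+1}} \to 1$ a.s. Hence this factor is uniformly bounded in $n$ a.s., and combined with $\sup_n V_n = V^* < \infty$ a.s.\ we conclude $\sup_n \sum_{x|_n} \bar\gamma_n(x|_n) < \infty$ a.s. There is no real obstacle here: the whole argument rests on having the quantity $V_n$ a.s.\ convergent, which is exactly what the martingale from Proposition \ref{prop:def-W} provides, and then interpolating between the $\alpha_{n+1}$-power and the $1$-power using that $\alpha_{n+1}$ is close to $1$.
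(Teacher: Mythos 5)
Your proof is correct, and it uses the same key input as the paper — the a.s.\ convergence of $V_n = \sum_{x|_n} \widebar\gamma_n(x|_n)^{\alpha_{n+1}}$ to the a.s.\ positive finite limit $W(\emptyset)$ from Proposition~\ref{prop:def-W} — together with the same split of the sum according to $\widebar\gamma_n(x|_n) \gtrless 1$. The one genuine point of divergence is how the ``large'' terms are handled: the paper controls them by separately showing that $m_n := \max_{x|_n}\widebar\gamma_n(x|_n)$ and the cardinality $|A_n|$ of $\{x|_n : \widebar\gamma_n(x|_n) > 1\}$ both have finite $\limsup$ (each being dominated by $W(\emptyset)$), and then bounds that part of the sum by $m_n|A_n|$; you instead use the pointwise bound $\widebar\gamma_n(x|_n) \leq V_n^{1/\alpha_{n+1}}$ to interpolate and obtain the single multiplicative factor $V_n^{(1-\alpha_{n+1})/\alpha_{n+1}}$, which tends to $1$ because $\alpha_n \to 1$ and $V_n \to W(\emptyset) \in (0,\infty)$. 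Your version is a bit tighter in that it dispenses with the explicit counting of large terms, while the paper's makes that cardinality bound explicit (which is a mildly informative fact in itself); both are short and elementary.
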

\begin{proof}
  Let $A_n := \{ x|_n \in \sN_*^n: \widebar{\gamma}_n(x|_n) > 1 \}$
  and $m_n := \max\{ \widebar{\gamma}_n(x|_n): x|_n \in
  \sN_*^n\}$. Using Proposition \ref{prop:def-W}, we know that almost
  surely:
  \begin{align*}
    W(\emptyset) &= \lim_{n \to \infty} \sum_{x|_n \in \sN_*^n} \left(
      \widebar{\gamma}_n(x|_n) \right)^{\alpha_{n+1}}
    \geq \limsup_{n \to \infty} \sum_{x|_n \in A_n} \left(
      \widebar{\gamma}_n(x|_n) \right)^{\alpha_{n+1}}
    \geq \limsup_{n \to \infty} |A_n|;\\
    W(\emptyset) &= \lim_{n \to \infty} \sum_{x|_n \in \sN_*^n} \left(
      \widebar{\gamma}_n(x|_n) \right)^{\alpha_{n+1}}
    \geq \limsup_{n \to \infty} \left( m_n \right)^{\alpha_{n+1}}
    = \limsup_{n \to \infty} m_n.
  \end{align*}

  Since $W(\emptyset) < \infty$, then $\limsup_n |A_n| < \infty$ and
  $\limsup_n m_n < \infty$ \emph{a.s.}. With this we can write:
  \begin{align*}
    \sum_{x|_n} \widebar{\gamma}_n(x|_n)
    &=
    \sum_{x|_n \in A_n} \widebar{\gamma}_n(x|_n) +
    \sum_{x|_n\ \not\in A_n} \widebar{\gamma}_n(x|_n)\\
    &\leq m_n |A_n| +
    \sum_{x|_n\ \not\in A_n} \left( \widebar{\gamma}_n(x|_n) \right)^{\alpha_{n+1}}.
  \end{align*}

  Because of the previous remark, the first term of this last sum is
  bounded \emph{a.s.}~by a constant, while the $\limsup$ of the second
  term is dominated by $W(\emptyset)$. This concludes this proof.
\end{proof}

\begin{lema}
  \label{lema:theta-subordinator}
  For almost every $\underline\gamma$ we have that
  $\theta_1^n$ is a subordinator for every $n \in
  \sN_*$. Furthermore:
  \begin{displaymath}
    \sE_\gamma \left[ \theta_1^n(t) \right] = t \sum_{x|_n} \overline{\gamma}_n(x|_n).
  \end{displaymath}
\end{lema}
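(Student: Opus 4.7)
The plan is to induct on $n$, establishing both claims simultaneously. For the base case $n=1$, the process $\clock_1(t) = \sum_{x \in \sN_*} \sum_{i=1}^{N^{1,x}(t)}\gamma_1(x)T_i^{1,x}$ is, conditional on $\underline\gamma$, a sum of independent compound Poisson processes indexed by $x$, each with rate $1$ and exponentially distributed jumps of mean $\gamma_1(x)$. Each summand is a subordinator with mean $t\gamma_1(x)$, and the summability $\sum_x\gamma_1(x) = \sum_{x|_1}\overline\gamma_1(x|_1)<\infty$ a.s.\ (condition~\eqref{eq:sum-gamma}) guarantees that the sum converges in $L^1(\sP_\gamma)$ to a subordinator with mean $t\sum_x\gamma_1(x)$.

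For the inductive step, I would write $\theta_1^n = \theta_2^n\circ\clock_1$ with $\theta_2^n := \clock_n\circ\cdots\circ\clock_2$, and decompose $\theta_1^n$ along the level-$1$ Poisson marks. Since $\clock_1$ is a.s.\ piecewise constant, jumping only at the marks $\sigma_i^{1,y_1}$, so is $\theta_1^n$, giving
\begin{equation*}
\theta_1^n(t) = \sum_{y_1 \in \sN_*}\sum_{i:\,\sigma_i^{1,y_1}\leq t} J_i^{y_1},\qquad J_i^{y_1}:=\theta_1^n(\sigma_i^{1,y_1}) - \theta_1^n(\sigma_i^{1,y_1}-).
\end{equation*}
The jump $J_i^{y_1}$ corresponds to $\clock_1$ advancing by $L_i^{y_1}:=\gamma_1(y_1)T_i^{1,y_1}$ while $X_1$ is pinned at $y_1$ throughout the corresponding interval. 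By the strong Markov property of the level-$\geq 2$ Poisson processes applied at the stopping time $\clock_1(\sigma_i^{1,y_1}-)$ (measurable with respect to level-$1$ randomness only, hence independent of the level-$\geq 2$ randomness), $J_i^{y_1}$ is, conditional on $\underline\gamma$ and $L_i^{y_1}$, distributed as an independent copy of a depth-$(n-1)$ clock composition $\widehat\theta_1^{n-1,y_1}(L_i^{y_1})$ on the subtree rooted at $y_1$ with shifted environment $\widehat\gamma_k(\cdot):=\gamma_{k+1}(y_1,\cdot)$. The induction hypothesis applied to this subtree gives $\sE_\gamma[\widehat\theta_1^{n-1,y_1}(r)] = r\gamma_1(y_1)^{-1}\sum_{x|_n\in[y_1]_n}\overline\gamma_n(x|_n)$, so $\sE_\gamma[J_i^{y_1}] = \sum_{x|_n\in[y_1]_n}\overline\gamma_n(x|_n)$. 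Thus for each fixed $y_1$ the inner sum is a compound Poisson subordinator (rate-$1$ marks with i.i.d.\ jumps), these are independent across $y_1$, and the total mean $\sum_{y_1}\sE_\gamma[J_i^{y_1}] = \sum_{x|_n}\overline\gamma_n(x|_n)<\infty$ by~\eqref{eq:sum-gamma} ensures the sum converges to $\theta_1^n$, yielding a subordinator with the claimed mean.

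The main obstacle is the clean use of the strong Markov property to identify the conditional law of $J_i^{y_1}$ and the mutual independence of the family $\{J_i^{y_1}\}$. Conditioning on all the level-$1$ randomness freezes the intervals $[\clock_1(\sigma_i^{1,y_1}-),\clock_1(\sigma_i^{1,y_1}))$ into a deterministic disjoint family; the desired independence then reduces to the standard fact that restrictions of independent Poisson processes to disjoint intervals are independent, while stationarity of those processes identifies the law of each $J_i^{y_1}$ with that of $\widehat\theta_1^{n-1,y_1}(L_i^{y_1})$. A direct inspection of~\eqref{eq:finite-k-proc} confirms that, on an interval where $X_1\equiv y_1$, $\theta_2^n$ coincides with the clock composition of the subtree rooted at $y_1$, closing the induction.
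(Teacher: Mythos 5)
Your proof is correct, and the mean computation is a nice self-contained addition (the paper simply cites Lemma 4.5 of \cite{kn:fgg} for that part). The route you take is genuinely different from the paper's, though. You decompose $\theta_1^n=\theta_2^n\circ\clock_1$ from the \emph{bottom}: condition on the level-$1$ randomness, view $\theta_1^n$ as a superposition over $y_1$ of compound-Poisson subordinators whose jump sizes are independent copies of depth-$(n-1)$ clocks on the subtree rooted at $y_1$, and induct on the subtree. This is essentially the $k=1$ instance of the ``pieces of clock'' picture from Definition~\ref{def:pedacos-clock} and Remark~\ref{obs:pedacos-clock}, and it exhibits the L\'evy structure of $\theta_1^n$ quite explicitly while giving the mean formula for free via the inductive identity applied to each subtree. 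The paper instead decomposes from the \emph{top}, writing $\theta_1^n=\clock_n\circ\theta_1^{n-1}$ and checking directly that the increments $\theta_1^n(t_i)-\theta_1^n(t_{i-1})$ are independent and stationary: by induction the lengths $\theta_1^{n-1}(t_i)-\theta_1^{n-1}(t_{i-1})$ are independent with law depending only on $t_i-t_{i-1}$, and the key further observation is that $\theta_1^{n-1}(t)$ is a renewal time for $X_{n-1}$ (since $X_{n-1}(\theta_1^{n-1}(t))=(\infty,\dots,\infty)$ a.s.\ by Remark~\ref{obs:infinitos-niveis}), so what $X_{n-1}$ does inside distinct intervals $(\theta_1^{n-1}(t_{i-1}),\theta_1^{n-1}(t_i))$ is independent across $i$. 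Your approach buys a more transparent compound-Poisson representation and a single induction covering both claims; the paper's buys a more direct verification of independent stationary increments at the cost of invoking the renewal structure of $X_{n-1}$. Both are sound.
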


\begin{proof}
  The expected value is computed in the proof of Lemma 4.5 from
  \cite{kn:fgg}. Let us prove that $\theta_1^n$ is a subordinator.  We
  will do this by induction on $n$. The case $n = 1$ is a direct
  consequence of the independent and stationary increments of a
  Poisson point process and the fact the variables used to construct
  $\theta_1^1 = \clock_1$ are independent.

  Assuming $n \geq 2$, note that for $t, s > 0$:
  \begin{align*}
    \theta_1^n(t+s) - \theta_1^n(t) &=
    \sum_{x} \sum_{i = N^{n, x}(\theta_1^{n-1}(t)) + 1}^{N^{n,
        x}(\theta_1^{n-1}(t+s))}
    \gamma_n(X_{n-1}(\sigma_i^{n, x}) x) T^{n, x}_i.
  \end{align*}

  Fix $0 = t_0 <  t_1 < \ldots < t_k$ and let us look at the joint
  distribution of $(\theta_1^n(t_i) - \theta_1^{n}(t_{i-1}))_{i = 1,
    \ldots, k}$.

  $\theta_1^n(t_i) - \theta_1^{n}(t_{i-1})$, for varying values of
  $i$, depends on disjoint intervals of the Poisson processes $N^{n,
    x}$. Each one with length $\theta_1^{n-1}(t_{i}) -
  \theta_1^{n-1}(t_{i-1})$. Because of the induction hypothesis these
  lengths are independent and each one has the same law as
  $\theta_1^{n-1}(t_i - t_{i-1})$.

  Note that, as can be justified by Remark \ref{obs:infinitos-niveis}, 
  we have that for  
  fixed $t > 0$, $X_{n-1}(\theta_1^{n-1}(t)) = (\infty, \ldots, \infty)$
  \emph{a.s.}. The instant $\theta_1^{n-1}(t)$ is thus a renewal time for
  $X_{n-1}$, 
  and, by construction, the law of $X_{n-1}$ right
  after such a renewal is the same as the law right after the instant
  zero.

  $\theta_1^n(t_i) - \theta_1^n(t_{i-1})$ also depends on the values
  of $X_{n-1}(r)$ for $r \in (\theta_1^{n-1}(t_{i-1}),
  \theta_1^{n-1}(t_i))$. Both ends of this interval are
  renewals, so what happens to $X_{n-1}$ inside such an interval is 
  independent of what happens on other such intervals (namely,
  $(\theta_1^{n-1}(t_{j-1}),\theta_1^{n-1}(t_j))$, $j\ne i$).

  Therefore we have that $\theta_1^n(t_i) - \theta_1^n(t_{i-1})$, $i\geq1$,
  are independent, and the law of each $\theta_1^n(t_i) - \theta_1^n(t_{i-1})$
  depends only on $t_i - t_{i-1}$. That is, $\theta_1^n$ has independent and
  stationary increments.
\end{proof}

\begin{observacao}
  \label{obs:theta-inf-subordinador}
  If the random environment  $\{\gamma_k(x|_k): k \in \sN_*, x|_k \in
  \sN_*^k\}$ is fixed, then Proposition \ref{prop:clock-fin-dim} and
  Lemma \ref{lema:theta-subordinator} readily imply that
  $\theta_1^\infty$ is also a subordinator.
\end{observacao}

\begin{teorema}
  \label{thm:clock-conv}
  Assuming that $\sum_{k}(1-\alpha_k) < \infty$, then
  for every $k \in \sN_*$, $\theta_k^n - \theta_k^\infty$ converges
  weakly to the identically null  function in the Skorohod topology as
  $n \to \infty$.
\end{teorema}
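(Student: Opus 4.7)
The theorem asserts that $\theta_k^n - \theta_k^\infty$ converges weakly to the identically zero function in the Skorohod topology as $n \to \infty$. Since the limit is a continuous deterministic function, this is equivalent to $\sup_{t \in [0, T]} |\theta_k^n(t) - \theta_k^\infty(t)| \to 0$ in probability for every $T > 0$. Proposition~\ref{prop:clock-fin-dim} already supplies the finite-dimensional convergence in probability, so the plan is to upgrade this to uniform convergence on compacts via a monotonicity argument, exploiting that both $\theta_k^n$ and $\theta_k^\infty$ are nondecreasing càdlàg (by construction; see Lemma~\ref{lema:theta-subordinator} and Theorem~\ref{thm:limit-clock}) and that the discontinuities of both processes lie in the countable random set $S = \{\sigma_i^{k, x}: i, x \in \sN_*\}$, which does not depend on $n$.

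For any partition $0 = t_0 < t_1 < \cdots < t_N = T$, using half-open intervals $[t_j, t_{j+1})$, the monotonicity of both processes yields
\begin{equation*}
\sup_{t \in [t_j, t_{j+1})} |\theta_k^n(t) - \theta_k^\infty(t)| \leq \max\bigl(|\theta_k^n(t_j) - \theta_k^\infty(t_j)|,\ |\theta_k^n(t_{j+1}-) - \theta_k^\infty(t_{j+1}-)|\bigr) + \bigl[\theta_k^\infty(t_{j+1}-) - \theta_k^\infty(t_j)\bigr].
\end{equation*}
Given $\epsilon > 0$ and working pathwise, I would choose the partition to include the finitely many (since $\theta_k^\infty(T) < \infty$ a.s.) jumps of $\theta_k^\infty$ on $[0, T]$ of magnitude $\geq \epsilon$ as partition points, together with enough additional points from a dense set to make the residual modulus $\theta_k^\infty(t_{j+1}-) - \theta_k^\infty(t_j) < \epsilon$ on every interval; the half-open convention ensures that the large jumps sit at partition points and do not appear in the interior of any interval.

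To handle the pathwise randomness of the partition, I would pass to subsequences. From any subsequence, a diagonal extraction based on Proposition~\ref{prop:clock-fin-dim} produces a further subsequence $(n_\ell)$ along which $\theta_k^{n_\ell}(q) \to \theta_k^\infty(q)$ almost surely for every $q$ in a countable set $Q$ that I take to contain $T$, a dense subset of $[0, T]$, and all of the (countable) random set $S \cap [0, T]$. Convergence at each $\sigma_i^{k, x}$ in probability is obtained from Proposition~\ref{prop:clock-fin-dim} via the composition identity $\theta_k^n = \theta_{k+1}^n \circ \clock_k$ upon conditioning on the level-$k$ processes, which makes $\clock_k(\sigma_i^{k, x})$ a fixed time, independent of levels above $k$. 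On the almost sure event of joint convergence at all $q \in Q$, monotonicity propagates the convergence to both right and left limits at every $\tau \in S$, so the bound above yields $\sup_{t \leq T} |\theta_k^{n_\ell}(t) - \theta_k^\infty(t)| \to 0$ almost surely, and the subsequence principle delivers convergence in probability along the full sequence.

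The main obstacle is the interaction between the partition --- whose positions of large-jump partition points depend on the realization of $\theta_k^\infty$ --- and the finite-dimensional convergence, which a priori only gives information at deterministic times. This is circumvented by incorporating the whole countable set $S \cap [0, T]$ of potential jump times into the fixed convergence skeleton $Q$, which is legitimate because the indexing set $\sN_* \times \sN_*$ of $S$ is deterministic and countable.
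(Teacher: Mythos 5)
Your reduction to local uniform convergence and your interval bound are fine, but the step ``monotonicity propagates the convergence to both right and left limits at every $\tau \in S$'' is where the proof breaks, and it is precisely the crux of the theorem. Pointwise convergence of nondecreasing functions on a countable set containing \emph{all} potential jump times plus a dense set does \emph{not} imply convergence of left limits at a jump of the limit, because jump mass can migrate along the countable set: take $S=\{1/2\}\cup\{1/2-1/m:m\in\sN_*\}$, $F_n=\ind\{t\geq 1/2-1/n\}$ and $F=\ind\{t\geq 1/2\}$; then $F_n(q)\to F(q)$ for every $q\in S$ and every $q$ in any dense set, all jumps of all $F_n$ and of $F$ lie in $S$, yet $F_n(1/2-)=1\not\to 0=F(1/2-)$ and $\sup_t|F_n-F|=1$ for all $n$. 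In your bound the term $|\theta_k^n(t_{j+1}-)-\theta_k^\infty(t_{j+1}-)|$ at a large-jump partition point is exactly of this nature, and nothing in your skeleton convergence rules out the jump of $\theta_k^n$ near $t_{j+1}$ ``arriving early'' with its mass drawn from neighbouring times $\sigma_{i'}^{k,x'}$ (which are dense). Some genuinely stochastic, uniform-in-$n$ control of increments is needed; this is what the paper supplies: for fixed $\underline\gamma$, $\theta_1^n$ is a subordinator with $\sE_\gamma[\theta_1^n(s)]=s\sum_{x|_n}\widebar\gamma_n(x|_n)$ (Lemma~\ref{lema:theta-subordinator}), the rates are bounded uniformly in $n$ by Lemma~\ref{lema:sup-sum-gamma}, and these feed a conditional-increment bound verifying the Ethier--Kurtz relative compactness criterion, after which finite-dimensional convergence (Proposition~\ref{prop:clock-fin-dim}) identifies the limit; the general $k$ is then handled by decomposing $\theta_{k+1}^n$ into the independent pieces $\theta_{x|_k}^n$ and truncating to finitely many $x|_k$.

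There is also a circularity problem in the ingredients you invoke. That the discontinuities of $\theta_k^\infty$ lie in $\{\sigma_i^{k,x}:i,x\in\sN_*\}$ is Corollary~\ref{cor:discontinuities}, and the identification $\theta_k^\infty(\sigma_i^{k,x})=\lim_n\theta_k^n(\sigma_i^{k,x})$ at these \emph{random} times via your conditioning argument implicitly needs the composition identity $\theta_k^\infty=\theta_{k+1}^\infty\circ\clock_k$, which is Corollary~\ref{cor:clock-composition}; in the paper both are consequences of Theorem~\ref{thm:clock-conv} itself (Proposition~\ref{prop:clock-fin-dim} and Theorem~\ref{thm:limit-clock} only give convergence, respectively a.s.\ existence of limits of the \emph{modified} clocks, at deterministic times). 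These gaps might be patchable with extra arguments, but as written the proposal assumes them, and even granting them the left-limit step above remains a genuine obstruction.
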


\begin{observacao}
  This is a stronger result than simply stating that $\theta_k^n$
  converges weakly to $\theta_k^\infty$ as $n \to \infty$ in the
  Skorohod topology. Convergence in the Skorohod topology to a
  continuous function is equivalent to uniform convergence on
  compact sets.
\end{observacao}

\begin{proof}[Proof of Theorem \ref{thm:clock-conv}]
  Under the triviality condition \eqref{eq:trivial}, this theorem is a
  direct corollary of Proposition \ref{prop:clock-fin-dim}, together
  with the observation that each $\theta_k^n$ is an
  increasing function. From now on let us assume the non-triviality
  condition \eqref{eq:nao-trivial}.

  Let us start with the case $k = 1$. Fix $\underline\gamma$ and
  let us show the convergence for almost every such choice.

  Since we have shown the finite dimensional convergence in Proposition
  \ref{prop:clock-fin-dim}, then Theorem 7.8 from Chapter 3 of
  \cite{kn:ek} says that if $\{|\theta_1^n - \theta_1^\infty|\}$
  is relatively compact, then it is true that it converges weakly to
  the identically null function.

  Using part (b) from Theorem 8.6 from Chapter 3 of \cite{kn:ek}, to
  show relative compactness it is enough to show that for and $0 < s <
  \delta$ and $t > 0$:
  \begin{align}
    \label{eq:clock-conv-2}
    \sE_\gamma \left[
      \left|
        \left(\theta_1^n(t+s) - \theta_1^\infty(t+s)  \right)
        -
        \left(\theta_1^n(t) - \theta_1^\infty(t)  \right)
      \right|
      \middle| \HH_t^n
    \right] &\leq
    2 \delta \sup_{m \in \sN} \sum_{x|_m} \widebar{\gamma}_m(x|_m)
    \xrightarrow[\textit{a.s.}]{\delta \to 0}  0,
  \end{align}
  where $\HH_t^n$ is the $\sigma$-algebra generated by
  $\{\theta_1^n(r) - \theta_1^\infty(r): r \leq t\}$.

  The almost sure convergence is a direct consequence of Lemma
  \ref{lema:sup-sum-gamma}. To prove the inequality, let $\HH_t$ be the
  $\sigma$-algebra generated by the random variables $\{\theta_1^n(r):
  r \leq t, n \in \sN_* \}$. Since $\HH_t^n \subseteq \HH_t$, we can write:
  \begin{align}
    \notag
    \sE_\gamma &\left[
      \left|
        \left(\theta_1^n(t+s) - \theta_1^\infty(t+s)  \right)
        -
        \left(\theta_1^n(t) - \theta_1^\infty(t)  \right)
      \right|
      \middle| \HH_t^n
    \right]\\
    \notag
    &\leq
    \sE_\gamma \left[
      \theta_1^n(t+s) - \theta_1^n(t)
      \middle| \HH_t^n
    \right]
    +
    \sE_\gamma \left[
      \theta_1^\infty(t+s) - \theta_1^\infty(t)
      \middle| \HH_t^n
    \right]\\
    \label{eq:clock-conv-1}
    &\leq
    \sE_\gamma \left[
      \sE_\gamma \left[
        \theta_1^n(t+s) - \theta_1^n(t)
        \middle| \HH_t
      \right]
      \middle| \HH_t^n
    \right]
    +
    \sE_\gamma \left[
      \sE_\gamma \left[
        \theta_1^\infty(t+s) - \theta_1^\infty(t)
        \middle| \HH_t
      \right]
      \middle| \HH_t^n
    \right].
  \end{align}

  Note that $\theta_1^n(t+s) - \theta_1^n(t)$ only depends on the
  values of $\theta_1^m(r)$ for $m \geq n$ and $r \leq t$ through the
  values of $\theta_1^n(r)$, $r \leq t$. So we can use
  Lemma \ref{lema:theta-subordinator} to compute the value of the first
  term:
  \begin{align*}
    \sE_\gamma \left[
      \theta_1^n(t+s) - \theta_1^n(t)
      \middle| \HH_t
    \right] =
    \sE_\gamma\left[ \theta_1^n(s) \right]
    = s \sum_{x|_n} \overline{\gamma}_n(x|_n).
  \end{align*}

  Since $\theta_{1}^n(t)$ and $\theta_1^n(t+s)$ converges in
  probability to $\theta_1^\infty(t)$ and $\theta_1^\infty(t+s)$
  respectively (Proposition \ref{prop:clock-fin-dim}), we can
  take an increasing sequence $n_m$ such that the convergence is
  almost sure. Then using Fatou's Lemma, we can conclude:
  \begin{align*}
    \sE_\gamma \left[
      \theta_1^\infty(t+s) - \theta_1^\infty(t)
      \middle| \HH_t
    \right]
    \leq
    \liminf_{m \to \infty}
    \sE_\gamma \left[
      \theta_1^{n_m}(t+s) - \theta_1^{n_m}(t)
      \middle| \HH_t
    \right]
    \leq
    \liminf_{m \to \infty} s \sum_{x|_{n_m}} \widebar{\gamma}_{n_m}(x|_{n_m}).
  \end{align*}

  Using the last results on \eqref{eq:clock-conv-1} gives us
  \eqref{eq:clock-conv-2}. And completes the proof that $\theta_1^n$
  converges in probability to $\theta_1^\infty$ uniformly in compacts.

  $\theta_{k+1}^n - \theta_{k+1}^\infty$ converges in
  probability to the null function in the Skorohod topology if and
  only if for every $T > 0$:
  \begin{align}
    \label{eq:clock-conv-3}
    \sup_{0 \leq t \leq T} |\theta_{k+1}^{n}(t) -
    \theta_{k+1}^\infty(t)|
    \xrightarrow[n\to \infty]{P} 0.
  \end{align}

  For each $x|_k \in \sN_*^k$ fixed, define $\theta_{x|_k}^\infty$
  from $\theta_{k+1}^\infty$ in an analogous way as done in Definition
  \ref{def:pedacos-clock}. Using Remark \ref{obs:pedacos-clock} and
  the last case, we can conclude that $\theta_{x|_k}^n -
  \theta_{x|_k}^\infty$ converges to the null function in probability
  in the Skorohod topology as $n \to \infty$.

  Take $V_1 \subset V_2 \subset \ldots \sN_*^k$ any increasing
  sequence of sets from $N_*^k$ such that $\bigcup_{l=1}^\infty V_l =
  \sN_*^k$ and each $V_l$ is finite. For an arbitrary $\epsilon > 0$,
  fix an $l$ such that 
  $$\sP_\gamma \left( \sum_{x|_k \not \in V_l}
  \theta_{x|_k}^\infty(L(x|_k, T)) > \epsilon \right) < \epsilon.$$
  Thence,
  \begin{align*}
    &\sup_{0 \leq t \leq T} |\theta_{k+1}^{n}(t) -
    \theta_{k+1}^\infty(t)|
    =
    \sup_{0 \leq t \leq T} \left|
      \sum_{x|_k} \theta_{x|_k}^{n}(L(x|_k, t)) -
      \theta_{x|_k}^\infty(L(x|_k, t))
    \right|\\
    &\leq
    \sup_{0 \leq t \leq T} \sum_{x|_k \in V_l} \left|
       \theta_{x|_k}^{n}(L(x|_k, t)) -  \theta_{x|_k}^\infty(L(x|_k,
       t))
    \right|
    + \sum_{x|_k \not\in V_l}
    \theta_{x|_k}^{n}(L(x|_k, T))\\
    &\mbox{ }\hspace{1cm}+ \sum_{x|_k \not\in V_l}
    \theta_{x|_k}^\infty(L(x|_k, T))\\
    &\leq
    \sup_{0 \leq t \leq T} \sum_{x|_k \in V_l} \left|
       \theta_{x|_k}^{n}(t) -  \theta_{x|_k}^\infty(t)
    \right|
    + \sum_{x|_k \not\in V_l}
    \theta_{x|_k}^{n}(L(x|_k, T))
    + \sum_{x|_k \not\in V_l}
    \theta_{x|_k}^\infty(L(x|_k, T)).
  \end{align*}

  The first term converges to $0$ in probability as $n \to \infty$
  because $V_l$ is finite. The third term is controlled by our choice
  of $\epsilon$. Using an analogous argument as in
  \eqref{eq:clock-fin-dim-2}, we can show that the second term
  converges to the third in probability as $n \to \infty$.
\end{proof}

\begin{observacao}
  \label{obs:clock-modif-conv}
  Using analogous arguments, we can prove Theorem \ref{thm:clock-conv}
  with $\widetilde\theta_k^n$ in the place of $\theta_k^n$. The only
  significant change in the proof comes from the equality $\sE_\gamma
  [\widetilde\theta_1^n(t)] = t W(\emptyset)$. However this actually
  slightly simplifies the proof.
\end{observacao}

\begin{corolario}
  \label{cor:clock-composition}
  If $\sum_i(1-\alpha_i) < \infty$, then for every $j, k \in \sN_*$
  with $j < k$ \emph{a.s.}:
  \begin{align*}
    \theta_j^\infty = \theta_{k}^\infty \circ \theta_j^{k-1}.
  \end{align*}
\end{corolario}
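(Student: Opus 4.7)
The plan is to pass to the limit in the obvious finite-level composition identity. From Definition~\ref{def:theta}, for any $j<k\le n$ we have
\begin{equation*}
\theta_j^n \;=\; \clock_n\circ\cdots\circ\clock_j \;=\; (\clock_n\circ\cdots\circ\clock_k)\circ(\clock_{k-1}\circ\cdots\circ\clock_j) \;=\; \theta_k^n \circ \theta_j^{k-1}.
\end{equation*}
So it suffices to show that the right-hand side converges, as $n\to\infty$, to $\theta_k^\infty\circ\theta_j^{k-1}$ in a mode strong enough to identify it with the limit $\theta_j^\infty$ of the left-hand side.

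By Theorem~\ref{thm:clock-conv}, both $\theta_j^n\to\theta_j^\infty$ and $\theta_k^n\to\theta_k^\infty$ in the Skorohod topology in probability; since the difference processes converge to the (continuous) null function, the remark following Theorem~\ref{thm:clock-conv} upgrades this to uniform convergence on compact sets in probability. I would then fix an arbitrary $T>0$ and an $\epsilon>0$, and choose a (random) $T'$ with $\sP(\theta_j^{k-1}(T)>T')<\epsilon$ (possible since $\theta_j^{k-1}(T)<\infty$ a.s., being a finite composition of finite clock processes). Because $\theta_j^{k-1}$ is the same function for every $n$, on the event $\{\theta_j^{k-1}(T)\le T'\}$ we have the deterministic domination
\begin{equation*}
\sup_{t\in[0,T]}\bigl|\theta_k^n(\theta_j^{k-1}(t))-\theta_k^\infty(\theta_j^{k-1}(t))\bigr|
\;\le\; \sup_{s\in[0,T']}\bigl|\theta_k^n(s)-\theta_k^\infty(s)\bigr|,
\end{equation*}
which tends to $0$ in probability. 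Thus $\theta_k^n\circ\theta_j^{k-1}\to\theta_k^\infty\circ\theta_j^{k-1}$ uniformly on $[0,T]$ in probability.

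Combining, $\theta_j^n$ converges in probability (uniformly on $[0,T]$) both to $\theta_j^\infty$ and to $\theta_k^\infty\circ\theta_j^{k-1}$; by uniqueness of limits in probability the two coincide a.s., which gives the claim for every $T$ and hence pointwise a.s.\ on $[0,\infty)$. The only subtle point is making sure that composition against the fixed random function $\theta_j^{k-1}$ preserves the convergence despite possible discontinuities of $\theta_k^\infty$; the argument above handles this because the uniform-in-probability convergence is over a deterministic (conditional on $T'$) compact interval, so no continuity of $\theta_k^\infty$ at the particular points $\theta_j^{k-1}(t)$ is required. I expect this to be the only mildly delicate step; everything else is routine manipulation of finite-level identities and standard convergence facts already established in Section~\ref{sec:clocks}.
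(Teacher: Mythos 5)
Your proof is correct and follows essentially the same route as the paper: both start from the finite-level identity $\theta_j^n=\theta_k^n\circ\theta_j^{k-1}$ and conclude via the locally uniform convergence established in Theorem~\ref{thm:clock-conv}, controlling the composed term by bounding the supremum over a compact interval that a.s.\ contains $\theta_j^{k-1}([0,T])$. The only real difference is cosmetic: the paper extracts a subsequence along which the uniform convergence holds almost surely and applies the triangle inequality directly, whereas you stay with convergence in probability and introduce a cutoff $T'$. One small slip: you describe $T'$ as ``(random)'', but for your bound $\sup_{t\in[0,T]}|\theta_k^n(\theta_j^{k-1}(t))-\theta_k^\infty(\theta_j^{k-1}(t))|\le\sup_{s\in[0,T']}|\theta_k^n(s)-\theta_k^\infty(s)|$ on the good event to combine cleanly with the in-probability convergence, $T'$ should be a deterministic constant (which exists since $\theta_j^{k-1}(T)<\infty$ a.s.); with that correction the argument is complete.
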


\begin{proof}
  Since, by Theorem \ref{thm:clock-conv},  $\theta_j^{n}$ and
  $\theta_{k}^n$ converge in probability in the uniform norm to
  $\theta_j^\infty$ and $\theta_k^\infty$ respectively, we can
  take an increasing sequence $n_m$ such that this convergence is
  almost sure. Fixing such a sequence, we can write that for any $T > 0$:
  \begin{align*}
   \sup_{t \in [0, T]} &\left|
      \theta_j^\infty(t) - \theta_{k}^\infty \left( \theta_j^{k-1}(t) \right)
    \right|\\ \leq
    \sup_{t \in [0, T]} &\left|
      \theta_j^\infty(t) - \theta_j^{n_m}(t)
    \right| +
    \sup_{t \in [0, T]} \left|
     \theta_{k}^{n_m} \left( \theta_j^{k-1}(t) \right) -
     \theta_{k}^{\infty} \left( \theta_j^{k-1}(t) \right)
    \right|\\
    \xrightarrow[\textit{a.s.}]{m \to \infty}& 0.
    \qedhere
  \end{align*}
\end{proof}

\begin{corolario}
  \label{cor:discontinuities}
  Suppose that $\sum_i (1 - \alpha_i) < \infty$. For any fixed $k \in
  \sN_*$, if $s \geq 0$ is a discontinuity point of $\theta_k^\infty$,
  then $s = \sigma_i^{k, x}$ for some $k, x \in \sN_*$.
\end{corolario}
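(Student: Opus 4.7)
The plan is to derive the claim from Theorem~\ref{thm:clock-conv} together with the finite-level analogue spelled out in Remark~\ref{obs:infinitos-niveis}. The remark following Theorem~\ref{thm:clock-conv} states that convergence in the Skorohod topology to a continuous function (here the null function) is equivalent to uniform convergence on compact sets, so Theorem~\ref{thm:clock-conv} yields $\theta_k^n \to \theta_k^\infty$ uniformly on every compact interval, in probability. By the standard subsequence principle for convergence in probability, we may extract a subsequence $(n_m)$ along which this uniform convergence holds almost surely.

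Next I would invoke Remark~\ref{obs:infinitos-niveis}, which guarantees that, for each $n \geq k$, the discontinuity set of $\theta_k^n$ is almost surely contained in $\{\sigma_i^{k,x} : i, x \in \sN_*\}$. Intersecting these countably many full-probability events with the almost sure subsequential uniform convergence above, we obtain an event of probability one on which both properties hold simultaneously for every $m$.

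On this event, suppose $s \geq 0$ satisfies $s \notin \{\sigma_i^{k,x} : i, x \in \sN_*\}$. Then every $\theta_k^{n_m}$ is continuous at $s$, and since $\theta_k^{n_m} \to \theta_k^\infty$ uniformly on a compact neighborhood of $s$, the elementary fact that a uniform limit of functions each continuous at a given point is itself continuous at that point gives continuity of $\theta_k^\infty$ at $s$. The contrapositive is exactly the statement of the corollary. No substantial obstacle is anticipated: the ingredients are already in place, and the only routine bookkeeping is the passage from convergence in probability to almost sure convergence along a subsequence, which is standard.
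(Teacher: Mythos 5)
Your proof is correct and relies on exactly the same two ingredients as the paper's: an almost-sure uniform-on-compacts subsequence extracted from Theorem~\ref{thm:clock-conv}, and the identification of the discontinuity set of the finite-level clocks from Remark~\ref{obs:infinitos-niveis}. The only difference is presentational: the paper shows directly that a jump of size $>\epsilon$ in $\theta_k^\infty$ at $s$ forces a jump of size $>\epsilon/2$ in $\theta_k^{n_m}$ for large $m$, while you state the contrapositive and appeal to the standard fact that a uniform limit of functions continuous at a point is continuous there — the same argument, just packaged more abstractly.
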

\begin{proof}
  Using Theorem \ref{thm:clock-conv}, we can take an increasing
  sequence $(n_m)_m$ such that, for an $T > s$ fixed arbitrarily:
  \begin{displaymath}
    \sup_{t \in [0, T]} | \theta_k^{n_m}(t) - \theta_k^{\infty}(t) |
    \xrightarrow[a.s.]{m \to \infty} 0.
  \end{displaymath}

  Since $s$ is a discontinuity point of $\theta_k^\infty$ and this is
  an non-decreasing càdlàg function, there exists an $\epsilon >
  0$ such that $\theta_k^\infty(s) - \theta_k^\infty(s-) > \epsilon$,
  and this implies that $\theta_k^\infty(s) - \theta_k^\infty(s-h) >
  \epsilon$ for every $h \in (0, s)$.

  For an arbitrary $h \in (0, s)$ we can write:
  \begin{align*}
    \theta_k^{n_m}(s) - \theta_k^{n_m}(s-h)
    &= \theta_k^{n_m}(s) - \theta_k^\infty(s)\\
    &+ \theta_k^{\infty}(s) - \theta_k^\infty(s-h)\\
    &+ \theta_k^\infty(s-h) - \theta_k^{n_m}(s-h).
  \end{align*}

  The first and third terms of this last equation converges
  \emph{a.s.}  to zero uniformly in $h$ as $m \to \infty$, while the
  second term is always greater than $\epsilon$. Therefore we conclude
  that, $\inf_{h \in (0, s)} \theta_k^{n_m}(s) - \theta_k^{n_m}(s-h) >
  \epsilon/2$ for large enough $m$, so $s$ is a discontinuity point of
  $\theta_k^{n_m}$. Finally Remark \ref{obs:infinitos-niveis} states
  that the set of discontinuity points of $\theta_k^{n_m}$ is
  ${\{\sigma_i^{k, x}: i, x \in \sN_* \}}$.
\end{proof}

\begin{corolario}
  \label{cor:tempo-infinito}
  Under the non-triviality assumption \eqref{eq:nao-trivial}, for any
  $k \in \sN_*$:
  \begin{displaymath}
    \theta_k^\infty(\sR) = \{t \geq 0: Y_k(t) = \infty\} \textit{ a.s.}
  \end{displaymath}
\end{corolario}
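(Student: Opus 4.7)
The plan is to prove the two set inclusions separately, leveraging the two previous results: Theorem~\ref{thm:nao-trivial}, which under the non-triviality assumption ensures that $\theta_k^\infty$ is almost surely strictly increasing with $\lim_{t\to\infty}\theta_k^\infty(t)=\infty$, and Corollary~\ref{cor:discontinuities}, which asserts that every discontinuity point of $\theta_k^\infty$ lies in $\{\sigma_i^{k,x}:i,x\in\sN_*\}$. Combining these with definition~\eqref{eq:k-proc-infinite} of $Y_k$, and noting that the half-open interval $[\theta_k^\infty(\sigma_i^{k,x}-),\theta_k^\infty(\sigma_i^{k,x}))$ is empty whenever $\sigma_i^{k,x}$ is a continuity point of $\theta_k^\infty$, I would first reduce the problem to showing
$$\theta_k^\infty(\sR)\;=\;[0,\infty)\setminus\bigcup_{s\text{ discontinuity of }\theta_k^\infty}[\theta_k^\infty(s-),\theta_k^\infty(s)).$$

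For the inclusion $\theta_k^\infty(\sR)\subseteq\{t:Y_k(t)=\infty\}$, I would fix $t_0=\theta_k^\infty(s_0)$ and suppose, toward a contradiction, that $t_0\in[\theta_k^\infty(s-),\theta_k^\infty(s))$ for some discontinuity $s$. Since $t_0=\theta_k^\infty(s_0)<\theta_k^\infty(s)$, monotonicity forces $s_0<s$; picking any $r$ with $s_0<r<s$, strict increasingness yields $\theta_k^\infty(s_0)<\theta_k^\infty(r)\leq\theta_k^\infty(s-)$, contradicting $t_0\geq\theta_k^\infty(s-)$. For the reverse inclusion, given $t_0\geq 0$, I would introduce the generalized inverse $s_0:=\inf\{r\geq 0:\theta_k^\infty(r)>t_0\}$, which is finite because $\theta_k^\infty$ tends to infinity. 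The defining infimum together with right-continuity gives $\theta_k^\infty(s_0-)\leq t_0\leq\theta_k^\infty(s_0)$; if the right inequality is in fact an equality, then $t_0\in\theta_k^\infty(\sR)$, while otherwise $s_0$ is a discontinuity point of $\theta_k^\infty$ and $t_0\in[\theta_k^\infty(s_0-),\theta_k^\infty(s_0))$, forcing $Y_k(t_0)\neq\infty$. Taking contrapositives completes the argument.

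The substantive content has already been extracted in Theorem~\ref{thm:nao-trivial} and Corollary~\ref{cor:discontinuities}, so the remaining task is essentially a clean topological dissection of the image of a non-decreasing càdlàg function. The only delicate point to watch is that strict increasingness (rather than mere monotonicity) is exactly what is needed to prevent a point of the image from falling inside a jump interval through its included left endpoint; this is precisely where the non-triviality assumption is invoked.
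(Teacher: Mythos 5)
Your proof is correct and follows essentially the same route as the paper: the forward inclusion uses strict increasingness of $\theta_k^\infty$ (Theorem~\ref{thm:nao-trivial}) to rule out an image point landing in a jump interval, and the reverse inclusion uses the generalized inverse, right-continuity, and Corollary~\ref{cor:discontinuities} to identify the jump with some $\sigma_i^{k,x}$. Your preliminary rewriting of $\{t:Y_k(t)=\infty\}$ as the complement of the union of jump intervals is only a cosmetic repackaging of the paper's argument.
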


\begin{proof}
  Start by taking $t \in \theta_k^\infty(\sR)$ and an $s \geq 0$ such
  that $\theta_k^\infty(s) = t$. Let us assume for contradiction
  hypothesis that $Y_k(t) = x < \infty$.

  By definition, since $Y_k(t) = x$, there exists an $i \in
  \sN_*$ such that $\theta_k^\infty (\sigma_i^{k, x}-) \leq t <
  \theta_k^\infty(\sigma_i^{k, x})$.

  Since $t = \theta_k^\infty(s)$ and $\theta_k^\infty$ is strictly
  increasing (Theorem \ref{thm:nao-trivial}), the right
  inequality implies that $s < \sigma_i^{k, x}$, which in turn implies
  that $t = \theta_k^\infty(s) < \theta_k^{\infty}(\sigma_i^{k, x}-)$,
  which contradicts the first inequality from the last
  paragraph. Therefore $Y_k(t) = \infty$.

  Let us assume that $t > 0$ is such that $Y_k(t) =
  \infty$. Take $s = \inf \left\{ r > 0: \theta_k^\infty(r) > t
  \right\}$. Using the right continuity of $\theta_k^\infty$, we
  conclude that $\theta_k^\infty(s) \geq t$. Let us now assume, as a
  contradiction hypothesis, that $\theta_k^\infty(s) > t$. By the
  definition of $s$, we know that $\theta_k^\infty(s-) \leq
  t$. Therefore $\theta_k^\infty(s-) \neq \theta_k^\infty(s)$.

  Using Corollary \ref{cor:discontinuities}, we obtain that $s =
  \sigma_i^{k, x}$ for some $i, x \in \sN_*$. Therefore
  $\theta_k^\infty(\sigma_i^{k, x}-) \leq t <
  \theta_k^\infty(\sigma_i^{k, x})$, which implies that $Y_k(t) = x <
  \infty$, contradicting our choice for $t$.
\end{proof}

\begin{observacao}
  If $\sum_i(1-\alpha_i) < \infty$, Corollary
  \ref{cor:clock-composition} and \ref{cor:tempo-infinito} readily
  imply that:
  \begin{displaymath}
    \{t \geq 0: Y_k(t) = \infty \} \subseteq \{t \geq 0: Y_{k+1}(t) =
    \infty \}
  \end{displaymath}
  almost surely for any $k \in \sN_*$.
\end{observacao}

\begin{corolario}
  \label{cor:lebesgue-tempo-infinito}
  Under the non-triviality condition \eqref{eq:nao-trivial}, the set
  $\{t \geq 0: Y_k(t) = \infty\}$ has null Lebesgue measure
  \emph{a.s.} for every $k \in \sN_*$.
\end{corolario}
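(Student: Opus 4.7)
The plan is to reduce the claim to a statement about the range of $\theta_k^\infty$ and then show that this range is countable by peeling off the outermost finite-level clock $\clock_k$.

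First, I would invoke Corollary~\ref{cor:tempo-infinito}, which identifies the infinity set as
\begin{displaymath}
  \{t \geq 0: Y_k(t) = \infty\} = \theta_k^\infty(\sR) \textit{ a.s.},
\end{displaymath}
so the claim becomes: $\theta_k^\infty(\sR)$ has null Lebesgue measure almost surely. Since the non-triviality condition~\eqref{eq:nao-trivial} is assumed, the Remark following Theorem~\ref{thm:trivial} ensures that $\sum_i (1-\alpha_i) < \infty$, and hence Corollary~\ref{cor:clock-composition} is available.

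Applying Corollary~\ref{cor:clock-composition} with the roles of ``$j$'' and ``$k$'' there taken to be our $k$ and $k+1$ (so that $\theta_k^k = \clock_k$), I get almost surely the decomposition
\begin{displaymath}
  \theta_k^\infty = \theta_{k+1}^\infty \circ \clock_k,
\end{displaymath}
which yields $\theta_k^\infty(\sR) = \theta_{k+1}^\infty\bigl(\clock_k(\sR)\bigr)$. Now $\clock_k$, as defined in~\eqref{eq:def-clock}, is a non-decreasing, right-continuous, pure jump step function: it is constant on each interval between consecutive elements of the countable set $\{\sigma_i^{k,x}: i, x \in \sN_*\}$ and jumps only at those points. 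Consequently its range is
\begin{displaymath}
  \clock_k(\sR) = \{0\} \cup \{\clock_k(\sigma_i^{k,x}): i, x \in \sN_*\},
\end{displaymath}
which is at most countable almost surely.

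Since the image of a countable set under any map is countable, $\theta_{k+1}^\infty(\clock_k(\sR))$ is countable almost surely, so $\theta_k^\infty(\sR)$ is countable and has null Lebesgue measure. This proves the corollary. There is no real obstacle here: all the heavy lifting was done in establishing Corollaries~\ref{cor:clock-composition} and~\ref{cor:tempo-infinito}; the only idea needed is that composing the possibly continuous subordinator-like function $\theta_{k+1}^\infty$ with the pure-jump step function $\clock_k$ on the outside gives a process with countable range, regardless of the analytic nature of $\theta_{k+1}^\infty$ itself.
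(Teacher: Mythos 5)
The central step of your argument fails. You claim that $\clock_k$ is ``constant on each interval between consecutive elements of the countable set $\{\sigma_i^{k,x}: i,x\in\sN_*\}$,'' and hence that $\clock_k(\sR)$ is countable. But the jump times $\{\sigma_i^{k,x}: i,x\in\sN_*\}$ are almost surely \emph{dense} in $(0,\infty)$: for every interval $(a,b)$ and every $x$, the rate-$1$ Poisson process $N^{k,x}$ has a point in $(a,b)$ with positive probability, and these events are independent over $x$, so by the second Borel--Cantelli lemma infinitely many of the $N^{k,x}$ fire in $(a,b)$. Thus $\clock_k$ has no interval of constancy, and there are no ``consecutive'' jump times. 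It is a pure jump increasing function, so its range does have Lebesgue measure zero, but it is a Cantor-like set, not a countable one. The conclusion ``image of a countable set under any map is countable'' is therefore unavailable, and the natural salvage --- ``image of a Lebesgue-null set under $\theta_{k+1}^\infty$ is Lebesgue-null'' --- simply is not true for a general increasing right-continuous map. Moreover, the only cheap upper bound, $\theta_{k+1}^\infty(\clock_k(\sR)) \subseteq \theta_{k+1}^\infty(\sR)$, merely reduces the level-$k$ claim to the level-$(k+1)$ claim, which is circular.

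The paper's proof takes a different and genuinely nontrivial route precisely because this shortcut does not work. It shows directly that
\begin{displaymath}
  \theta_k^\infty(t) = \sum_{x=1}^\infty\sum_{i=1}^{N^{k,x}(t)}\bigl(\theta_k^\infty(\sigma_i^{k,x})-\theta_k^\infty(\sigma_i^{k,x}-)\bigr)
\end{displaymath}
almost surely, i.e.\ that $\theta_k^\infty$ equals the sum of its own jumps, so it is a pure jump function and its range is Lebesgue-null. The paper even warns explicitly that this is \emph{not} a consequence of the uniform convergence $\theta_k^{n}\to\theta_k^\infty$: a uniform limit of pure jump increasing functions can pick up a drift. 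The actual argument uses Fatou plus the martingale structure of the modified clocks $\widetilde\theta_k^{n}$ (established in Theorem~\ref{thm:limit-clock}) to control the tail $\sum_{x>N}$ of the jump sum, together with Remark~\ref{obs:clock-modif-conv}. That martingale control is the key idea your proposal is missing; without it the argument does not close.
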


\begin{proof}
  We will prove that the following equality is valid for every $t > 0$
  almost surely:
  \begin{equation}
    \label{eq:lebesgue-tempo-infinito-1}
    \theta_k^\infty(t) = \sum_{x = 1}^\infty \sum_{i=1}^{N^{k, x}(t)}
    \theta_k^\infty(\sigma_i^{k, x})-
    \theta_k^\infty(\sigma_i^{k, x} -).
  \end{equation}
  With this we are showing that $\theta_k^\infty$ is a step function,
  therefore its image has null Lebesgue measure.  The result then
  follows from Corollary \ref{cor:tempo-infinito}.

  Note that \eqref{eq:lebesgue-tempo-infinito-1} is not a direct
  consequence of the uniform convergence in Theorem
  \ref{thm:clock-conv}. It is possible to construct a sequence of step
  functions $(f_n)$ that converge uniformly to another function $f$,
  all having exactly the same discontinuities but $f$ itself is not a
  step function.

  For the rest of the proof, let us fix $\underline\gamma$ and prove 
  the result for almost every such choice.

  Since $\theta_k^\infty$ is a non-decreasing, càdlàg function,
  it is the distribution function of a measure. The right hand side of
  \eqref{eq:lebesgue-tempo-infinito-1} can be interpreted as the sum
  over some points of this measure. Therefore we conclude that:
  \begin{displaymath}
    \theta_k^\infty(t) \geq \sum_{x = 1}^\infty \sum_{i=1}^{N^{k, x}(t)}
    \theta_k^\infty(\sigma_i^{k, x})-
    \theta_k^\infty(\sigma_i^{k, x} -).
  \end{displaymath}

  To show the reverse inequality, using Remark
  \ref{obs:clock-modif-conv}, let us take an increasing sequence $(n_m)_m$
  such that this convergence is almost sure. We note that for any
  fixed $N \in \sN_*$:
  \begin{align}
    \notag
    \theta_k^\infty(t) &= \lim_{m \to \infty}
    \widetilde{\theta}_k^{n_m}(t)\\
    \notag
    &= \lim_{m \to \infty}
    \sum_{x = 1}^\infty \sum_{i=1}^{N^{k, x}(t)}
    \widetilde\theta_k^{n_m}(\sigma_i^{k, x})-
    \widetilde\theta_k^{n_m}(\sigma_i^{k, x} -)\\
    \label{eq:lebesgue-tempo-infinito-2}
    &\leq \limsup_{m \to \infty}
    \sum_{x = 1}^N \sum_{i=1}^{N^{k, x}(t)}
    \widetilde\theta_k^{n_m}(\sigma_i^{k, x})-
    \widetilde\theta_k^{n_m}(\sigma_i^{k, x} -)\\
    \label{eq:lebesgue-tempo-infinito-3}
    &+ \limsup_{m \to \infty}
    \sum_{x = N+1}^\infty \sum_{i=1}^{N^{k, x}(t)}
    \widetilde\theta_k^{n_m}(\sigma_i^{k, x})-
    \widetilde\theta_k^{n_m}(\sigma_i^{k, x} -).
  \end{align}

  We also note that \eqref{eq:lebesgue-tempo-infinito-2} is equal to:
  \begin{displaymath}
    \sum_{x = 1}^N \sum_{i=1}^{N^{k, x}(t)}
    \theta_k^\infty(\sigma_i^{k, x})-
    \theta_k^\infty(\sigma_i^{k, x} -) \xrightarrow[a.s.]{N \to
      \infty}
    \sum_{x = 1}^\infty \sum_{i=1}^{N^{k, x}(t)}
    \theta_k^\infty(\sigma_i^{k, x})-
    \theta_k^\infty(\sigma_i^{k, x} -).
  \end{displaymath}

  To complete the proof, we need to show that
  \eqref{eq:lebesgue-tempo-infinito-3} converges to zero in
  probability as $N \to \infty$.  By analogous arguments as those used in
  the proof of Theorem \ref{thm:limit-clock}, 
  $(\widetilde{\theta}_k^n(\sigma_i^{k, x}) -
  \widetilde{\theta}_k^n(\sigma_i^{k, x}-))_n$ is appropriately a
  martingale. Therefore the sequence that we are taking the $\limsup$ of
  in \eqref{eq:lebesgue-tempo-infinito-3} is a positive martingale.
  Using again a martingale convergence theorem, 
  we conclude that the $\limsup$ in that expression is
  in fact a limit.

  Denoting \eqref{eq:lebesgue-tempo-infinito-3} by $K_N$ and again letting
  $\GG_k$ be the $\sigma$-algebra generated by all dynamical information up to the
  level $k$, we can use Fatou's Lemma to obtain that:
 \begin{align}
    \notag
    \sE_\gamma[K_N] &\leq \liminf_{m \to \infty} \sE_\gamma \left[
      \sum_{x = N+1}^\infty \sum_{i=1}^{N^{k, x}(t)}
      \widetilde\theta_k^{n_m}(\sigma_i^{k, x})-
      \widetilde\theta_k^{n_m}(\sigma_i^{k, x} -)
    \right]\\
    \notag
    &= \liminf_{m \to \infty} \sum_{x = N+1}^\infty
    \sE_\gamma \left[
      \sum_{i=1}^{N^{k, x}(t)}
      \sE_\gamma\left[
        \widetilde\theta_k^{n_m}(\sigma_i^{k, x})-
        \widetilde\theta_k^{n_m}(\sigma_i^{k, x} -)
        \middle| \FF_{k-1}
      \right]
    \right]\\
    \notag
    &= \liminf_{m \to \infty} \sum_{x = N+1}^\infty \sE_\gamma \left[
      \sum_{i=1}^{N^{k, x}(t)} \gamma_k(X_{k-1}(\sigma_i^{k,
        x}) x) W(X_{k-1}(\sigma_i^{k, x}) x)
    \right]\\
    \label{eq:lebesgue-tempo-infinito-4}
    &= \sum_{x = N+1}^\infty \sE_\gamma \left[
      \sum_{i=1}^{N^{k, x}(t)} \gamma_k(X_{k-1}(\sigma_i^{k,
        x}) x) W(X_{k-1}(\sigma_i^{k, x}) x)
    \right]\\
    \notag
    &\leq \sE_\gamma \left[ \sum_{x =1}^\infty
      \sum_{i=1}^{N^{k, x}(t)} \gamma_k(X_{k-1}(\sigma_i^{k,
        x}) x) W(X_{k-1}(\sigma_i^{k, x}) x)
    \right]\\
    \notag
    &= \sE_\gamma \left[ \widetilde\clock_k(t) \right] < \infty.
  \end{align}

  So we proved that the sum in \eqref{eq:lebesgue-tempo-infinito-4} is
  convergent and therefore converges to zero as $N \to
  \infty$. Finally $K_N$ converges in $L_1$ to zero and therefore in
  probability as well.
\end{proof}


Finally we can prove the main result of this paper, stated on Section
\ref{sec:constr-main-result}.

\begin{proof}[Proof of Theorem \ref{thm:conv-infinite-process}]
  We will omit the proof that $\sY$ is a càdlàg process, since the
  argument is quite similar to the ones used to prove the convergence.

  Take any increasing sequence $(b_n)$ of natural numbers. We will show
  that this sequence has a subsequence $(k_n)_{n}$ over which
  $X_{k_n}$ converges in probability to $\sY$ as $n \to \infty$. This
  implies that $X_k$ converges in probability to $\sY$.

  Theorem \ref{thm:clock-conv} guarantees that for each $j$ there exists a
  subsequence $(a_n)$ of $(b_n)$ such that $\theta_j^{a_n} -
  \theta_j^\infty$ converges almost surely in the uniform norm. Using
  Cantor's diagonal method we can show that there exists $(k_n)$ a
  subsequence of $(a_n)$ such that $\theta_j^{k_n} -
  \theta_j^{\infty}$ converges almost surely for all $j$. Fix such a
  subsequence. It is along it that we will show the convergence.

  Fix a realization of the process, we will show that for almost all
  such realizations and for every $T > 0$, there exists a sequence
  $(\lambda_n)$ of functions such that $\lambda_n: [0, \infty) \to [0,
  \infty)$ is strictly increasing Lipschitz continuous,
  that satisfies:
  \begin{align}
    \label{eq:conv-infinite-process-1}
    \lim_{n \to \infty} \sup_{t \in [0, T]} \rho\left(
      X_{k_n}(t), \sY(\lambda_n(t))
    \right) &= 0,\\
    \label{eq:conv-infinite-process-2}
    \lim_{n \to \infty} \sup_{t \in [0, T]} \left|
      \lambda_n(t) - t
    \right| &= 0.
  \end{align}

  Theorem 5.3 from Chapter 3 of \cite{kn:ek} guarantees that
  showing this is equivalent to showing that $X_{k_n}$ converges to
  $\sY$ almost surely in the Skorohod topology.

  We will show that for every $\epsilon > 0$ there exists a sequence
  $(\lambda_n)$ such that \eqref{eq:conv-infinite-process-2} is
  satisfied and the quantity in \eqref{eq:conv-infinite-process-1} is
  smaller than $\epsilon$.  This will imply that exists a sequence
  $(\lambda_n)$ that satisfies \eqref{eq:conv-infinite-process-1} and
  \eqref{eq:conv-infinite-process-2}.

  For a fixed $\epsilon > 0$, take $N, M \in \sN_*$ such that $\sum_{j
    \geq N} \frac{1}{2^j} < \epsilon/2$ and $1/(M+1) < \epsilon/2$, and
  assume that $n$ is large enough so that $k_n > N$.

  To construct $\lambda_n$, for $k_n > N$, take $S_n := \{
  \theta_j^{N}(\sigma_i^{j, x}-), \theta_j^{N}(\sigma_i^{j, x}): x
  \leq M, \, j < N,\, \theta_j^{k_n}(\sigma_i^{j, x}-) \leq T
  \}$. Define $\lambda_n$ such that for every $s \in S_n$:
  \begin{align*}
    \lambda_n(\theta_{N+1}^{k_n} (s)) &= \theta_{N+1}^\infty(s).
  \end{align*}
  Complete $\lambda_n$ linearly between these points and let it evolve
  linearly with angular coefficient $1$ after the last point.  From the
  facts that $S_n$ is finite and $\lambda_n$ is linear by parts, it readily
  follows that $\lambda_n$ is Lipschitz continuous. Theorem
  \ref{thm:nao-trivial} guarantees that $\lambda_n$ is strictly
  increasing, so it qualifies as a candidate for temporal distortion.

  Using Remark \ref{obs:coordinate-k-process}, we know that the $j$-th
  coordinate, $j < N$, of $X_{k_n}(t)$ is equal to an $x \leq M$ if
  and only if:
  \begin{align*}
    t \in \bigcup_{i=1} \left[
      \theta_j^{k_n} (\sigma_i^{j, x} -),
      \theta_j^{k_n} (\sigma_i^{j, x})
    \right) =
    \bigcup_{i=1} \left[
      \theta_{N+1}^{k_n} ( \theta_j^N (\sigma_i^{j, x} -)),
      \theta_{N+1}^{k_n} ( \theta_j^N (\sigma_i^{j, x}))
    \right) \Leftrightarrow\\
    \lambda_n(t) \in
    \bigcup_{i=1} \left[
      \theta_{N+1}^{\infty} ( \theta_j^N (\sigma_i^{j, x} -)),
      \theta_{N+1}^{\infty} ( \theta_j^N (\sigma_i^{j, x}))
    \right) =
    \bigcup_{i=1} \left[
      \theta_{j}^{\infty} (\sigma_i^{j, x} -),
      \theta_{N+1}^{\infty} (\sigma_i^{j, x})
    \right).
  \end{align*}

  Note that we have used Corollary \ref{cor:clock-composition} in the last
  passage. With this we conclude that, for any coordinate $j < N$:
  \begin{align*}
    \sup_{t \in [0, T]} \rho_0(X_{k_n, j}(t), Y_j(\lambda_n(t))) &\leq
    \frac{1}{M+1} < \frac{\epsilon}{2},\\
    \sup_{t \in [0, T]} \rho(X_{k_n}(t), \sY(\lambda_n(t))) &<
    \frac{\epsilon}{2} + \sum_{j \geq N} \frac{1}{2^{j}} < \epsilon.
  \end{align*}

  This concludes \eqref{eq:conv-infinite-process-1}. To show
  \eqref{eq:conv-infinite-process-2} take $T^\prime > 0$ such that
  $\theta_j^\infty(T^\prime) > T+1$ for every $j < N$, and note that,
  for large enough $n$:
  \begin{displaymath}
    \sup_{t \in [0, T]} |\lambda_n(t) - t| =
    \max_{s \in S_n} \left|
      \theta_{N+1}^{k_n}(s) - \theta_{N+1}^{\infty}(s)
    \right|
    \leq
    \sup_{s \in [0, T^\prime]} \left|
      \theta_{N+1}^{k_n}(s) - \theta_{N+1}^{\infty}(s)
    \right|
    \xrightarrow{n \to \infty} 0.
    \qedhere
  \end{displaymath}
\end{proof}

\begin{observacao}
 \label{rmk:conds}
 We come back to the conditions~(\ref{eq:constant-choice}) 
  and~(\ref{cond2}) to discuss what if any of them is missing. Without~(\ref{eq:constant-choice}),
  we cannot insure the existence of $W(\cdot)$ and thus of the $\theta^\infty_k$'s, without 
  which we cannot define the limiting K process. One might try to obtain convergence in distribution 
  of the clocks, but we do not see an approach to, say, take the limit of the Laplace transform of 
  $\theta^n_k$ as $n\to\infty$. If we keep~(\ref{eq:constant-choice}) but try to relax~(\ref{cond2}),
  then it is the nontriviality of the $\theta^\infty_k$'s that is at stake: we know or presume that
  it is identically zero in this case, and this disables the definition of a meaningful limiting 
  K process.
  We would need in this case to rescale $\theta^n_k$ before taking the limit, but it is not clear 
  to us even 
  which would the right scale be, or if this would lead to the definition of the correct limit
  for the K process. If this could be done, the limiting process could be quite
  different from the one we obtained above.
\end{observacao}



%
\section{Empirical Measure}
\label{sec:invar-meas-some}

In this section we will assume that $\underline\gamma$ is fixed. All
results from this section are valid for almost all choices for this
random environment (under nontriviality conditions).

The main result of this section is the computation of the asymptotic 
empirical measure of
the K process on a tree with infinite depth, that is, the proportion
of the time that this process spends on cylinders $[x|_k] = \{
y|_\infty \in \widebar{\sN}_*^{\sN_*}: y|_k = x|_k \}$. We will show
that it is almost surely given by:
\begin{equation}
  \label{eq:medida-invariante}
  \lim_{t \to \infty} \frac{1}{t} \int_0^\infty \ind\left\{
    \sY(t) \in [x|_k]
  \right\} d t
  =
  \frac{
    \widebar{\gamma}_k(x|_k) \sE_\gamma \left[
      \theta_{x|_k}^\infty(1) \right]
  }{\sE_\gamma\left[\theta_1^\infty(1) \right]}.
\end{equation}

We showed that the right hand side of this last expression is well-defined
\emph{a.s.}~in Theorem \ref{thm:limit-clock} (since in particular we are
under nontriviality conditions). To compute the empirical
measure, we will rely on the following assumption.
\begin{suposicao}
  \label{conj:markov}
  The K process on a tree with finite depth is strongly Markovian.
\end{suposicao}

A proof of the Markov property of the finite level K process can be found
in~\cite{kn:g}. The Feller property (as well as the Markov property itself) 
should follow from arguments similar to those for the 1 level case used 
in~\cite{kn:fm}, establishing the strong Markov property. We choose to 
not go to detail, and leave the issue as an assumption. 




Before computing the empirical measure, let us prove some auxiliary
results.

\begin{proposicao}
  \label{prop:esperanca-linear}
  If $T$ is a positive random variable independent from
  $\theta_{x|_k}^\infty$, then
  \begin{displaymath}
    \sE_\gamma \left[
      \theta_{x|_k}^\infty(T)
    \right]
    = \sE_\gamma\left[\theta_{x|_k}^\infty(1) \right] \sE_\gamma(T).
  \end{displaymath}
\end{proposicao}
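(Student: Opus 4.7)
My plan is to reduce the claim to the linearity of $t \mapsto \sE_\gamma[\theta_{x|_k}^\infty(t)]$ and then peel off $T$ by conditioning. The two ingredients I need are (i) that $\theta_{x|_k}^\infty$ is a subordinator under $\sP_\gamma$, and (ii) that its mean at time $1$ is finite, so that the usual conditioning arguments are justified without measurability concerns about infinite expectations.

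First I would argue that, for fixed $\underline\gamma$, the process $\theta_{x|_k}^\infty$ is a subordinator. Remark \ref{obs:pedacos-clock} identifies $\theta_{x|_k}^n$ in law with a shifted-index version of $\theta_1^{n-k}$, and the adjusted variants $\widetilde\theta_{x|_k}^n$ inherit the same structure (with the $W$-tilt being a function of the environment only, thus harmless after conditioning on $\underline\gamma$). Lemma \ref{lema:theta-subordinator} tells us each $\theta_1^{n-k}$ is a subordinator for almost every $\underline\gamma$, hence so is $\theta_{x|_k}^n$ and its adjusted version. Passing to the almost-sure limit along a suitable subsequence, as constructed in Theorem \ref{thm:limit-clock} and Remark \ref{obs:clock-modif-conv}, preserves stationarity and independence of increments on the dense set $D_k$, and right-continuity of $\theta_{x|_k}^\infty$ extends these properties to all $t \geq 0$. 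This is essentially what is observed in Remark \ref{obs:theta-inf-subordinador} for the case $k=0$, $x|_k = \emptyset$; the same argument applies here.

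Once $\theta_{x|_k}^\infty$ is a subordinator under $\sP_\gamma$, its mean function is additive and nondecreasing, hence linear, so
\begin{equation*}
  \sE_\gamma\left[\theta_{x|_k}^\infty(t)\right] = t\,\sE_\gamma\left[\theta_{x|_k}^\infty(1)\right], \qquad t \geq 0.
\end{equation*}
Finiteness of $\sE_\gamma[\theta_{x|_k}^\infty(1)]$ is guaranteed almost surely by the bound $\sE_\gamma[\theta_{x|_k}^\infty(1)] \leq W(x|_k) < \infty$ from \eqref{eq:expected-limit-clock} in Theorem \ref{thm:limit-clock}.

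Finally I would condition on $T$. Since $T$ is independent of $\theta_{x|_k}^\infty$ under $\sP_\gamma$, the tower property combined with the linearity just obtained gives
\begin{equation*}
  \sE_\gamma\left[\theta_{x|_k}^\infty(T)\right] = \sE_\gamma\left[\sE_\gamma\left[\theta_{x|_k}^\infty(t)\right]_{t=T}\right] = \sE_\gamma\left[T\right]\sE_\gamma\left[\theta_{x|_k}^\infty(1)\right],
\end{equation*}
which is exactly the claim. The only genuinely non-routine step is verifying (i); everything else is standard once the subordinator property is in hand. If one is uneasy about asserting stationary independent increments directly from the $\sP_\gamma$-a.s.\ pointwise limit, a cleaner route is to take Laplace transforms: Proposition \ref{prop:laplace-clock} yields, in the $n\to\infty$ limit (along the a.s.\ convergent subsequence of Theorem \ref{thm:clock-conv}), a Laplace transform of the form $\exp\{-t\,\psi(\lambda)\}$ for a random Laplace exponent $\psi$ depending only on $\underline\gamma$, from which the subordinator property and the linear expectation are immediate.
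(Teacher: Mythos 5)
Your proposal is correct and takes essentially the same route as the paper: both rest the argument on the subordinator property of $\theta_{x|_k}^\infty$ under $\sP_\gamma$ (via Remark~\ref{obs:theta-inf-subordinador} and the law-identification in Remark~\ref{obs:pedacos-clock}), deduce linearity of the mean function $t\mapsto\sE_\gamma[\theta_{x|_k}^\infty(t)]$ from additivity plus monotonicity, and finish by conditioning on the independent $T$. The paper spells out the linearity step by first treating rational $t$ and then sandwiching a general $t$ between rationals, which is exactly the ``nondecreasing additive function is linear'' argument you compress into one sentence; and the paper proves the statement first for $\theta_1^\infty$ and invokes Remark~\ref{obs:pedacos-clock} to transfer it to $\theta_{x|_k}^\infty$, whereas you argue directly at level $x|_k$ — but these are cosmetic differences, not a different method.
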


\begin{proof}
  We will prove this result only for $\theta_1^\infty$, Remark
  \ref{obs:pedacos-clock} extends the result to
  $\theta_{x|_k}^\infty$.

  Fix an arbitrary $n, m \in \sN_*$, since $\theta_1^\infty$ is a
  subordinator (Remark \ref{obs:theta-inf-subordinador}) then
  $\theta_1^\infty(n/m)$ has the same law then the sum of $n$
  independent copies of $\theta_1^\infty(1/m)$. By the same argument
  $\theta_1^\infty(1)$ has the same law as the sum of $m$ independent
  copies of $\theta_1^\infty(1/m)$. Therefore:
  \begin{displaymath}
    \sE_\gamma \left[\theta_1^\infty\left(\frac{n}{m}\right)\right] =
    n \sE_\gamma \left[\theta_1^\infty\left(\frac{1}{m}\right)\right] =
    \frac{n}{m} \sE_\gamma \left[\theta_1^\infty\left( 1 \right)\right]
  \end{displaymath}

  For an arbitrary real $t > 0$, take $q, r$ rational numbers such
  that $0 < q < t < r$. Since a subordinator is monotonic then:
  \begin{gather*}
    \sE_\gamma \left[\theta_1^\infty\left( q \right)\right] \leq
    \sE_\gamma \left[\theta_1^\infty\left( t \right)\right] \leq
    \sE_\gamma \left[\theta_1^\infty\left( r \right)\right]\\
    \Leftrightarrow
    q \sE_\gamma \left[\theta_1^\infty\left( 1 \right)\right] \leq
    \sE_\gamma \left[\theta_1^\infty\left( t \right)\right] \leq
    r \sE_\gamma \left[\theta_1^\infty\left( 1 \right)\right].
  \end{gather*}

  Since $q, r$ were taken arbitrarily, we have that $ \sE_\gamma
  \left[\theta_1^\infty\left( t \right)\right] = t
  \left[\theta_1^\infty\left( 1 \right)\right]$. To complete the
  proof, let $\nu$ be the probability measure associated with $T$.
  Since $T$ is independent from $\theta_1^\infty$, we conclude that
  \begin{equation}\nonumber
    \sE_\gamma \left[ \theta_1^\infty(T) \right]
    =
    \int \sE_\gamma \left[ \theta_1^\infty(t) \right] \nu(d t)
    =
    \sE_\gamma \left[ \theta_1^\infty(1) \right] \int t \nu(d t)
    =
    \sE_\gamma \left[ \theta_1^\infty(1) \right] \sE_\gamma(T). 
  \end{equation}
\end{proof}

The next result states that, for a fixed $t \geq 0$, the family
$\{\sE_\gamma[\theta_{x|_k}^\infty(t)]: k \in \sN_* , \, x|_k \in \sN_*^k\}$
obeys a composition law analogous as the one stated in Proposition
\ref{prop:composition-W} for the family $\{W(x|_k): k \in \sN_* , \,
x|_k \in \sN_*^k\}$.

\begin{proposicao}
  \label{prop:comp-exp-theta}
  For any fixed $t > 0$ and $x|_k \in \sN_*^k$, if $\sum_i(1-\alpha_i)
  < \infty$:
  \begin{equation}
    \label{eq:comp-exp-theta-1}
    \sE_\gamma \left[ \theta_{x|_k}^\infty (t)  \right] =
    \sum_{x_{k+1}} \gamma_{k+1} (x|_{k+1})
    \sE_\gamma \left[
      \theta_{x|_{k+1}}^\infty (t)
    \right].
  \end{equation}
  Furthermore:
  \begin{equation}
    \label{eq:comp-exp-theta-2}
    \sE_\gamma \left[ \theta_{x|_k}^\infty (\sigma_1^{k+1, x_{k+1}} - )
    \right] =
    \sE_\gamma \left[
      \theta_{x|_{k}}^\infty (1)
    \right]
    - \gamma_{k+1} (x|_{k+1})
    \sE_\gamma \left[
      \theta_{x|_{k+1}}^\infty (1)
    \right].
  \end{equation}
\end{proposicao}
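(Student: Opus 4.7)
The plan is to decompose $\theta_{x|_k}^\infty(t)$ via the level-$(k+1)$ visits to the children $x|_{k+1} = x|_k x_{k+1}$ and then integrate. The first step would be to establish, $\sP_\gamma$-almost surely, the renewal-type identity
\begin{equation*}
  \theta_{x|_k}^\infty(t) = \sum_{x_{k+1} \in \sN_*}
  \sum_{i=1}^{N^{k+1, x_{k+1}}(t)}
  \vartheta_{x|_{k+1}, i}\bigl(\gamma_{k+1}(x|_{k+1}) T_i^{k+1, x_{k+1}}\bigr),
\end{equation*}
where $\{\vartheta_{x|_{k+1}, i}\}_{i \ge 1}$ are, for each $x|_{k+1}$, i.i.d.~copies of $\theta_{x|_{k+1}}^\infty$, mutually independent across $(x|_{k+1}, i)$ and independent of $\{N^{k+1, \cdot}\}, \{T^{k+1, \cdot}\}$. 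This is a level-$(k+1)$ refinement of Remark~\ref{obs:pedacos-clock}: at each finite level $n$ the identity is immediate from the recursive construction~\eqref{eq:finite-k-proc}, since each visit of the sub-process to $x|_{k+1}$ contributes an independent copy of $\theta_{x|_{k+1}}^n$ evaluated at $\gamma_{k+1}(x|_{k+1}) T_i^{k+1, x_{k+1}}$; I would then pass $n \to \infty$ using Theorem~\ref{thm:clock-conv}.

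Granted this, \eqref{eq:comp-exp-theta-1} will follow by taking $\sE_\gamma$ on both sides: independence of the $\vartheta$'s from $\{N^{k+1, \cdot}\}, \{T^{k+1, \cdot}\}$, Fubini for non-negative terms, and Proposition~\ref{prop:esperanca-linear} (applied both to $T_1^{k+1, x_{k+1}}$ and to $\theta_{x|_{k+1}}^\infty$) give
\begin{equation*}
  \sE_\gamma\bigl[\theta_{x|_k}^\infty(t)\bigr] = t \sum_{x_{k+1}} \gamma_{k+1}(x|_{k+1}) \sE_\gamma\bigl[\theta_{x|_{k+1}}^\infty(1)\bigr]
  = \sum_{x_{k+1}} \gamma_{k+1}(x|_{k+1}) \sE_\gamma\bigl[\theta_{x|_{k+1}}^\infty(t)\bigr],
\end{equation*}
the last step being another appeal to Proposition~\ref{prop:esperanca-linear}. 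For \eqref{eq:comp-exp-theta-2}, the same decomposition evaluated at the random time $\sigma_1^{k+1, x_{k+1}}-$ removes the $x_{k+1}' = x_{k+1}$ summand (no mark of $N^{k+1, x_{k+1}}$ strictly precedes $\sigma_1^{k+1, x_{k+1}}$); for $x_{k+1}' \ne x_{k+1}$, the independence of $\{N^{k+1, x_{k+1}'}\}$ from $\sigma_1^{k+1, x_{k+1}} \sim \mathrm{Exp}(1)$ gives $\sE_\gamma[N^{k+1, x_{k+1}'}(\sigma_1^{k+1, x_{k+1}}-)] = 1$, and the same Fubini-independence calculation yields $\sE_\gamma[\theta_{x|_k}^\infty(\sigma_1^{k+1, x_{k+1}}-)] = \sum_{x_{k+1}' \ne x_{k+1}} \gamma_{k+1}(x|_k x_{k+1}') \sE_\gamma[\theta_{x|_k x_{k+1}'}^\infty(1)]$; adding and subtracting the $x_{k+1}' = x_{k+1}$ term, now recognizable via~\eqref{eq:comp-exp-theta-1} as $\sE_\gamma[\theta_{x|_k}^\infty(1)]$, closes the argument.

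The hard part will be the first step, namely passing the a.s.~decomposition from finite to infinite levels while controlling the outer infinite sum over $x_{k+1}$. The convergence $\theta_{x|_{k+1}}^n \to \theta_{x|_{k+1}}^\infty$ from Theorem~\ref{thm:clock-conv} is only in probability (upgradable to a.s.~along a subsequence by diagonalization), and $\theta_{x|_{k+1}}^n$ is not monotone in $n$, so swapping $\lim_n$ with $\sum_{x_{k+1}}$ is not automatic. My preferred route would be to exploit the uniform $L^1(\sP_\gamma)$ bound $\sE_\gamma[\tilde\theta_{x|_k}^n(t)] = t W(x|_k)$ (from the martingale computation in the proof of Theorem~\ref{thm:limit-clock}) together with Fatou to handle the tail in $x_{k+1}$; alternatively, one can first truncate the outer sum to $x_{k+1} \le N$, pass $n \to \infty$ in each finite truncation, and then let $N \to \infty$ using that $\theta_{x|_k}^\infty(t) < \infty$ almost surely.
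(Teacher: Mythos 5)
Your proposal is correct, and the expectation computations (Wald at the outer Poisson layer, Proposition~\ref{prop:esperanca-linear} at the inner subordinator layer) line up with the paper's. Where you differ is in how you obtain the level-$(k+1)$ decomposition of $\theta_{x|_k}^\infty$: you set it up at finite $n$ via the recursive construction and then propose to pass $n\to\infty$, flagging the exchange of the limit with the infinite sum over $x_{k+1}$ as the hard part. The paper sidesteps this entirely by invoking Corollary~\ref{cor:clock-composition}, which already furnishes the almost sure composition identity $\theta_{k+1}^\infty = \theta_{k+2}^\infty \circ \clock_{k+1}$ at the infinite level; since $\theta_{x|_k}^\infty$ is by definition the restriction of $\theta_{k+1}^\infty$ to the excursions at $x|_k$, the decomposition into contributions $\theta_{x|_{k+1}}^\infty\bigl(L_{k+1}(x|_{k+1}, \clock_{k+1}(\cdot))\bigr)$ holds pathwise with no further limiting argument. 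Your truncation/Fatou plan should close the gap (the uniform $L^1(\sP_\gamma)$ bound from the martingale in Theorem~\ref{thm:limit-clock} supplies the needed tightness of the tail over $x_{k+1}$), but you would effectively be re-proving a special case of Corollary~\ref{cor:clock-composition}; citing it directly is cleaner. For~\eqref{eq:comp-exp-theta-2}, your observation that the $x_{k+1}$-summand is empty before $\sigma_1^{k+1,x_{k+1}}$ is precisely the paper's ``set $\gamma_{k+1}(x|_{k+1})=0$'' device, and $\sE_\gamma[\sigma_1^{k+1,x_{k+1}}]=1$ plays the same role in the paper's $\sE_\gamma(\sigma_1^{k+1,x_{k+1}})\,\sE_\gamma[\widehat\theta_{x|_k}^\infty(1)]$ that your Wald count $\sE_\gamma\bigl[N^{k+1,x_{k+1}'}(\sigma_1^{k+1,x_{k+1}}-)\bigr]=1$ plays in yours.
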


\begin{proof}[Sketch of proof]
  To prove \eqref{eq:comp-exp-theta-1}, apply Corollary
  \ref{cor:clock-composition} to break the contribution from each
  $x_{k+1}$ to $\theta_{x|_k}^\infty$, and then use Proposition
  \ref{prop:esperanca-linear}.

  To prove \eqref{eq:comp-exp-theta-2}, note that
  $\theta_{x|_k}^\infty$, up to time $\sigma_1^{k+1, x_{k+1}}$ is
  independent from this time, with the exception that it does not see
  any point of the Poisson Process $N^{k+1, x_{k+1}}$, which would be
  equivalent of setting $\gamma_{k+1}(x|_{k+1}) = 0$. Denoting by
  $\widehat{\theta}_{x|_k}^\infty$ a version of $\theta_{x|_k}^\infty$
  in which this modification was made, we can use the previous result
  and Proposition \ref{prop:esperanca-linear} to obtain:
  \begin{align*}
    \sE_\gamma \left[
      \theta_{x|_k}^\infty (\sigma_1^{k+1, x_{k+1}} - )
    \right]
    &=
    \sE_\gamma \left[
      \widehat{\theta}_{x|_k}^\infty (\sigma_1^{k+1, x_{k+1}} )
    \right]\\
    &=
    \sE_\gamma(\sigma_1^{k+1, x_{k+1}})
    \sE_\gamma \left[
      \widehat{\theta}_{x|_k}^\infty (1)
    \right]\\
    &=
    \sum_{y \neq x_{k+1}} \gamma_{k+1}(x|_k y) \sE_\gamma \left[
      \theta_{x|_k y}^\infty (1)
    \right]\\
    &=
    \sE_\gamma \left[
      \theta_{x|_k}^\infty (1)
    \right] -
     \gamma_{k+1}(x|_{k+1}) \sE_\gamma \left[
      \theta_{x|_{k+1}}^\infty (1)
    \right]
    \qedhere
  \end{align*}
\end{proof}

\begin{definicao}
  Let us denote the first $k$ coordinates of the process $\sY$ by
  $Y|_k$, that is:
  \begin{displaymath}
    Y|_k := (Y_1, \ldots, Y_k)
  \end{displaymath}
\end{definicao}

\begin{definicao}
  For a fixed $y|_k \in \sN_*^k$, let us denote by $U_i$ and $V_i$ the
  $i$-th entrance and exit times respectively of $\sY$ in
  $[y|_k]$. That is, we define $V_0 := 0$ and for $i = 1, 2, \ldots$:
  \begin{subequations}
    \label{eq:entrance-exit-times}
    \begin{align}
      \label{eq:entrance-times}
      U_i &:= \inf \left\{
        t > V_{i-1}: Y|_k(t) = y|_k
      \right\}\\
      \label{eq:exit-times}
      V_i &:= \inf \left\{
        t > U_i: Y|_k(t) \neq y|_k
      \right\}
    \end{align}
  \end{subequations}
\end{definicao}

\begin{observacao}
  Since $\sY$ is right continuous, then $Y|_k(U_i) = y|_k$ and
  $Y|_k(V_i) \neq y|_k$. Furthermore it is true that $Y_j(U_i) =
  \infty$ for every $j > k$ and that:
  \begin{displaymath}
    Y_j(V_i) = \begin{cases}
      y_j & \text{ if } j < k,\\
      \infty & \text{ otherwise.}
    \end{cases}
  \end{displaymath}
\end{observacao}

The increment $V_i - U_i$ is the time spent by $\sY$ on $y|_k$ on its
$i$-th visit. It is equal to $\theta_k^\infty(\sigma_j^{k, y_k}) -
\theta_k^\infty(\sigma_j^{k, y_k}-)$ for some $j$. Therefore $\sE_\gamma(V_i
- U_i) = \sE_\gamma\left[ \theta_{y|_k}^\infty\left(\gamma_k(x|_k) T_1^{k,
      y_k}\right) \right] = \gamma_k(y|_k)
\sE_\gamma(\theta_{y|_k}^\infty(1))$.

\begin{proposicao}
  \label{prop:time-outside}
  The increment $U_{i+1} - V_i$ is the time spent outside of $[y|_k]$
  between successive visits to this cylinder. Its expected value can
  be computed as:
  \begin{equation}
    \label{eq:time-outside}
    \sE_\gamma (U_{i+1} - V_i) = \frac{\sE_\gamma\left( \theta_1^\infty (1)
      \right)}{\widebar{\gamma}_{k-1}(y|_{k-1})} -
    \gamma_k(y|_k) \sE_\gamma \left( \theta_{y|_k}^\infty (1) \right),
  \end{equation}
  under the convention that $\widebar{\gamma}_0(y|_0) := 1$.
\end{proposicao}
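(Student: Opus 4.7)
My plan is first to invoke the strong Markov property (Assumption~\ref{conj:markov}): since each $V_i$ is a stopping time and $Y|_k(U_i)=y|_k$ for every $i\ge1$, a standard regenerative argument shows that $\sE_\gamma(U_{i+1}-V_i)$ is the same for all $i\ge 1$. Decomposing $U_{i+1}-V_i=(U_{i+1}-U_i)-(V_i-U_i)$ and recalling from the remark preceding the statement that $\sE_\gamma(V_i-U_i)=\gamma_k(y|_k)\sE_\gamma(\theta_{y|_k}^\infty(1))$, the problem reduces to showing
$$\sE_\gamma(U_{i+1}-U_i)=\sE_\gamma(\theta_1^\infty(1))/\widebar{\gamma}_{k-1}(y|_{k-1}).$$

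To compute the expected inter-arrival time, I would perform a time change $t=\theta_k^\infty(\tau)$ using the clock composition $\theta_1^\infty=\theta_k^\infty\circ\theta_1^{k-1}$ from Corollary~\ref{cor:clock-composition}. In the $\tau$-scale, $Y|_{k-1}(\theta_k^\infty(\cdot))$ coincides with the $(k-1)$-level K process $X_{k-1}$ built from the Poisson/exponential data at levels $1,\ldots,k-1$, while $Y_k(\theta_k^\infty(\tau))=y_k$ precisely when $\tau=\sigma_j^{k,y_k}$ for some $j$ (the real-time intervals $[\theta_k^\infty(\sigma_j^{k,y_k}-),\theta_k^\infty(\sigma_j^{k,y_k}))$ collapse to single points in $\tau$). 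Entries $U_i$ to $[y|_k]$ thus correspond, in $\tau$-time, to those Poisson points of $N^{k,y_k}$ that fall during visits of $X_{k-1}$ to $y|_{k-1}$. Since $N^{k,y_k}$ is independent of $X_{k-1}$, a Poisson-thinning argument gives the expected inter-arrival in $\tau$-time as the reciprocal of the long-run fraction of time $X_{k-1}$ spends at $y|_{k-1}$, namely $\sum_{x|_{k-1}}\widebar{\gamma}_{k-1}(x|_{k-1})/\widebar{\gamma}_{k-1}(y|_{k-1})$.

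Mapping back to real time via $\theta_k^\infty$, which is independent of $X_{k-1}$ and $N^{k,y_k}$ (being built from level-$k,k+1,\ldots$ data), Proposition~\ref{prop:esperanca-linear} yields
$$\sE_\gamma(U_{i+1}-U_i)=\sE_\gamma(\theta_k^\infty(1))\cdot\frac{\sum_{x|_{k-1}}\widebar{\gamma}_{k-1}(x|_{k-1})}{\widebar{\gamma}_{k-1}(y|_{k-1})}.$$
A further use of Corollary~\ref{cor:clock-composition} with Proposition~\ref{prop:esperanca-linear} gives $\sE_\gamma(\theta_1^\infty(1))=\sE_\gamma(\theta_k^\infty(1))\,\sE_\gamma(\theta_1^{k-1}(1))$, and Lemma~\ref{lema:theta-subordinator} identifies $\sE_\gamma(\theta_1^{k-1}(1))=\sum_{x|_{k-1}}\widebar{\gamma}_{k-1}(x|_{k-1})$; assembling these yields the claimed formula. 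The main obstacle is to rigorously justify the empirical-measure claim for the infinite-state $(k-1)$-level K process at $y|_{k-1}$: though it is the natural trap-model stationary weight, a derivation avoiding circular use of the empirical-measure theorem (to which this proposition contributes) requires either a direct ergodic/renewal analysis of $X_{k-1}$ or an approximation from finite-volume trap models of~\cite{kn:fgg}.
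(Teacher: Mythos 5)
Your plan takes a genuinely different route from the paper's, and while the underlying intuition (time-changing via $\theta_k^\infty$, Poisson thinning against visits of $X_{k-1}$ to $y|_{k-1}$, then using stationarity of the $k-1$ level picture) is sound, two of the key steps do not hold as stated. First, $\theta_k^\infty$ is \emph{not} independent of $X_{k-1}$ (nor of $N^{k,y_k}$): by construction, $\clock_k$ and hence $\theta_k^\infty=\lim\widetilde\theta_k^n$ sample $\gamma_k(X_{k-1}(\sigma_i^{k,x})x)$, so the entire level-$k$-and-above structure is driven by the trajectory of $X_{k-1}$. What \emph{is} independent of $X_{k-1}$ are the pieces $\theta_{x|_{k-1}}^\infty$ for fixed $x|_{k-1}$ (Remark~\ref{obs:pedacos-clock}); $\theta_k^\infty$ is built by assembling them along the $X_{k-1}$ trajectory, and for $k>1$ it is not a subordinator. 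Second and consequently, the factorization $\sE_\gamma(\theta_1^\infty(1))=\sE_\gamma(\theta_k^\infty(1))\,\sE_\gamma(\theta_1^{k-1}(1))$ does not follow from Corollary~\ref{cor:clock-composition} plus Proposition~\ref{prop:esperanca-linear}: that proposition applies to the subordinator pieces $\theta_{x|_k}^\infty$ with an independent argument, and neither hypothesis is met here. Indeed $\sE_\gamma(\theta_k^\infty(1))=\sum_{x|_{k-1}}\sE_\gamma[L_{k-1}(x|_{k-1},1)]\,\sE_\gamma[\theta_{x|_{k-1}}^\infty(1)]$ where $\sE_\gamma[L_{k-1}(x|_{k-1},1)]$ is a transient (non-stationary-start) occupation time, not $\widebar\gamma_{k-1}(x|_{k-1})/\sum\widebar\gamma_{k-1}$, so the identity you want generically fails.

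Beyond these two technical gaps, you correctly flag the third one yourself: your argument inputs the long-run occupation fraction of $X_{k-1}$ at $y|_{k-1}$ and then needs to convert a $\tau$-scale renewal rate into a real-time expectation, which is precisely the kind of ergodic bookkeeping that this proposition is meant to supply. The paper sidesteps all three issues by a different decomposition: it works at level $k-1$ with the memorylessness of the residual exponential, obtaining a geometric number $M$ (success probability $p=\gamma_{k-1}(y|_{k-1})/(1+\gamma_{k-1}(y|_{k-1}))$) of excursions out of $[y|_{k-1}]$, each with mean $a_{k-1}:=\sE_\gamma(U_{i+1}-V_i)$ for the $(k-1)$-cylinder, plus the time spent in $[y|_{k-1}]\setminus[y|_k]$, identified in law with $\theta_{y|_{k-1}}^\infty(\sigma_1^{k,y_k}-)$ and evaluated via \eqref{eq:comp-exp-theta-2}. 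This gives the recursion $a_k=a_{k-1}/\gamma_{k-1}(y|_{k-1})+\sE_\gamma[\theta_{y|_{k-1}}^\infty(1)]-\gamma_k(y|_k)\sE_\gamma[\theta_{y|_k}^\infty(1)]$, which telescopes to \eqref{eq:time-outside} without invoking any finite-level empirical measure or the false factorization.
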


\begin{proof}
  Let us denote by $S_j^1, S_j^2, \ldots$ the times between visits to
  $y|_j$. We claim that it follows from Assumption~\ref{conj:markov}
  that this variables form an \emph{i.i.d.}~sequence. Indeed we can
  write the cycles determined by the successive visits of $Y|_j$ to
  $y|_j$ in terms of the cycles determined by the successive visits of
  $X_j$ to $y|_j$, where $X_j$ is the j level K process constructed in
  Section~\ref{sec:constr-main-result}. Each former cycle is obtained
  as the sum over constancy intervals of $X_j$ within the
  corresponding latter cycle of
  $\theta^\infty_{x|_j}(L_j(x|_j, b_j)) -
  \theta^\infty_{x|_j}(L_j(x|_j, a_j))$,
  where $x|_j$ is the constant value of $X_j$ within the respective
  constancy interval $[a_j, b_j)$. The strong Markov property of $X_j$
  implies that the distribution of constancy intervals of $X_j$ within
  the latter cycles are \emph{i.i.d.}~when we vary those cycles.
  Since moreover
  $\theta^\infty_{x|_j}(L_j(x|_j, b_j)) -
  \theta^\infty_{x|_j}(L_j(x|_j, a_j))$
  are independent when we vary the constancy intervals (see Remark
  \ref{obs:pedacos-clock} and Lemma \ref{lema:theta-subordinator}
  above),
  the claim follows. (One ought also to be able to make an argument
  for the claim, dispensing with Assumption~\ref{conj:markov}, by
  using directly the structure of the model together with the lack of
  memory of the exponential distribution.)

  Let us now define $a_j := \sE_\gamma(S^1_j)$.  We want to compute $a_k$.

  At time $V_i$, the process just exited $y|_k$, that is, $V_i =
  \theta_k^\infty(\sigma_{i^\prime}^{k, y_k})$ for some
  $i^\prime$. There exists an $\sigma_{i^{\prime\prime}}^{k-1,
    y_{k-1}}$ such that $\sigma_{i^\prime}^{k, y_k} \in
  [\clock_{k-1}(\sigma_{i^{\prime\prime}}^{k-1, y_{k-1}} -),
  \clock_{k-1}(\sigma_{i^{\prime\prime}}^{k-1, y_{k-1}}))$. This
  interval has length $\gamma_{k-1}(y|_{k-1})
  T_{i^{\prime\prime}}^{k-1, y_{k-1}}$. Because of the loss of memory
  of the exponential distribution, the distribution of
  $\clock_{k-1}(\sigma_{i^{\prime\prime}}^{k-1, y_{k-1}}) -
  \sigma_{i^\prime}^{k, y_k}$ is exactly the same as the distribution
  of the whole interval.

  With probability $p = \gamma_{k-1}(y|_{k-1})/(1+
  \gamma_{k-1}(y|_{k-1}))$ it will happen that $\sigma_{i^\prime +
    1}^{k, y_k} < \clock_{k-1}(\sigma_{i^{\prime\prime}}^{k-1,
    y_{k-1}}))$. In this case the K process will visit $y|_k$ again
  before exiting $y|_{k-1}$.

  If this does not happen it will take a time $S_{k-1}^1$ for the
  process to visit $y|_{k-1}$ again, after that it will have a
  probability $p$ of visiting $y|_k$ during this visit, if this does
  not happen then it will take a time $S_{k-1}^2$ for a third try, and
  so on.

  Therefore the time spent outside of $y|_{k-1}$ has the same law as
  $\sum_{i=1}^M S_{k-1}^i$, where $M$ is a geometric random variable,
  with success probability $p$, independent of $S_{k_1}^i, i=1, 2,
  \ldots$.

  The total time spent on $y|_{k-1}$ but outside of $y|_k$ has the
  same law as $\theta_{y|_{k-1}}(\sigma_1^{k, y_k}-)$. Therefore:
  \begin{align*}
    a_k &= a_{k-1} \frac{1-p}{p} + \sE_\gamma \left[
      \theta_{y|_{k-1}}(\sigma_1^{k, y_k}-)
    \right]\\
    &= \frac{a_{k-1}}{\gamma_{k-1}(y|_{k-1})} +
    \sE_\gamma \left[
      \theta_{y|_{k-1}}^\infty (1)
    \right]
    - \gamma_{k} (y|_{k})
    \sE_\gamma \left[
      \theta_{y|_{k}}^\infty (1)
    \right].
  \end{align*}
  Applying an induction in $k$, we obtain \eqref{eq:time-outside}.
\end{proof}

It follows from Assumption~\ref{conj:markov}, as argued above, 
that increments between
$(U_i, V_i)$ are independent from each other as $i$ varies. Therefore we can 
use the strong law of large numbers to compute the proportion of the time that
the process stays on the state $y|_k$ as:
\begin{align*}
  \pi(y|_k) &:=
  \lim_{n \to \infty} \frac{\sum_{i = 1}^\infty V_i - U_i}{V_n}
  \substack{\emph{a.s.}\\=}
 \frac{\sE_\gamma \left[ V_i - U_i \right]}{\sE_\gamma \left[ U_{i+1} - U_i
    \right]}\\
  &= \frac{\sE_\gamma \left[ V_i - U_i \right]}{\sE_\gamma \left[ U_{i+1} - V_i
    \right] + \sE_\gamma \left[V_i - U_i \right]}
  \notag
  = \frac{\widebar{\gamma}_k(y|_k) \sE_\gamma\left[ \theta_{y|_k}^\infty (1)
    \right]}{\sE_\gamma \left[ \theta_1^\infty(1) \right]},
\end{align*}
and formula~\eqref{eq:medida-invariante} is established.
Proposition \ref{prop:comp-exp-theta} can be then used to verify the
conditions of Kolmogorov's Consistency Theorem, concluding that this
function $\pi$ defines a probability measure on the product
$\sigma$-algebra. It should be the equilibrium measure for the 
infinite level GREM-like K process.

We finish with the remark that in case the upper bound for 
$\sE_\gamma\left[\theta_{x|_k}^\infty(1)
\right]$ in Theorem \ref{thm:limit-clock} saturated (that
would be the case if the convergence $\theta_\cdot^n(1)\to\theta_\cdot^\infty(1)$
took place in $L_1$), then $\pi$ would
have a more explicit, nicer looking form, namely 
$$\pi(y|_k)=\frac{\widebar{\gamma}_k(y|_k) W(y|_k)}{W(\emptyset)}.$$
At the moment, we do not know whether this is the case or not.



\section*{Acknowledgements}

The authors would like to thank NUMEC-USP for the hospitality shown to the second author
during the development of this work.

%



\begin{thebibliography}{xxxxxx 89}

\bibitem{kn:fgg} Fontes, L.R.G.; Gava, R.J.; Gayrard, V.~(2014)\\
The K-process on a tree as a scaling limit of the GREM-like trap model\\
{\em Ann. Appl. Probab.}~{\bf 24}, 857-897

\bibitem{kn:d2} Derrida, B.~(1985) \\
A generalization of the random energy model which includes correlations between energies\\
{\em J.~Physique Lett.}~{\bf 46}, L401-L407


\bibitem{kn:d1} Derrida, B.~(1980)\\ 
Random-energy model: Limit of a Family of Disordered Models\\
{\em Phys.~Rev.~Lett.}~{\bf 45}, 79-82

\bibitem{kn:sn} Sasaki, M.; Nemoto, K.~(2000)\\
Analysis on Aging in the Generalized Random Energy Model\\
{\em J.~Phys.~Soc.~Jpn.}~{\bf 69}, 3045-3050  


\bibitem{kn:b} 
Bouchaud, J.-P. (1992)\\
Weak ergodicity breaking and aging in disordered systems\\
\emph{J.~Phys.~I France}~{\bf 2}, 1705–1713



\bibitem{kn:abg} Ben Arous, G.; Bovier, A.; Gayrard, V.~(2003)\\
Glauber dynamics of the random energy model. II. Aging below the critical temperature\\
{\em Comm.~Math.~Phys.}~{\bf 236}, no.~1, 1--54. 



\bibitem{kn:fl} Fontes, L.R.G.; Lima, P.H.S.~(2009)\\
Convergence of symmetric trap models in the hypercube\\
In: XVth International Congress on Mathematical Physics, 2006, Rio de Janeiro\\
{\em New Trends in Mathematical Physics}, Springer, 285-297
   
\bibitem{kn:fm} Fontes, L.R.G.; Mathieu, P.~(2008)\\
K-processes, scaling limit and aging for the trap model on the complete graph\\
{\em Ann.~Probab.}~{\bf 36}, 1322-1358.





              


\bibitem{kn:abc} Ben Arous, G.; Bovier, A.; \v Cern\'y, J.~(2008)\\		
Universality of the REM for dynamics of mean-field spin glasses\\
{\em Comm.~Math.~Phys.}~{\bf 282}, 663-695

\bibitem{kn:bg} Bovier, A.; Gayrard, V.~(2013)\\
Convergence of clock processes in random environments and ageing in the $p$-spin SK model\\
{\em Ann.~Probab.}~{\bf  41}, 817-847.



\bibitem{kn:fg} Fontes, L.R.G.; Gayrard, V.\\
Asymptotic behavior of a hierarchical hopping dynamics for the two level GREM at low temperature and ergodic time scale\\
in preparation







\bibitem{kn:d} Durrett, R.~(2010)\\
Probability: theory and examples\\
{\em  Cambridge}, 4th ed.

\bibitem{kn:st} Samorodnitsky, G.; Taqqu, M.~(1994)\\
Stable Non-Gaussian Random Processes: Stochastic Models with Infinite Variance\\
{\em  Chapman and Hall}

\bibitem{kn:l} Laue, G.~(1980)\\
Remarks on the relation between fractional moments and fractional derivatives of characteristic functions\\
{\em J.~Appl.~Probab.}~{\bf 17}, 456-466




\bibitem{kn:ek} Ethier, S.N.; Kurtz, T.G.~(1986)\\
Markov processes. Characterization and convergence\\ 
{\em  Wiley}

\bibitem{kn:k} Kingman, J.F.C.~(1993)\\
Poisson processes\\
{\em  Oxford}

    

\bibitem{kn:g} Gava, R.J.~(2011)\\
Scaling limits of trap models on a tree\\
Ph.D.~thesis; University of São Paulo (in portuguese)

          


\end{thebibliography}

\end{document}